\pgfplotsset{compat=1.14}
\def\Indicator{\mathds{1}}
\mathchardef\mhyphen="2D 
\theoremstyle{definition}
\newtheorem{defi}{Definition}[section]
\theoremstyle{remark}
\newtheorem{rem}[defi]{Remark}
\newtheorem{example}[defi]{Example}
\theoremstyle{plain}
\newtheorem{theorem}[defi]{Theorem}
\newtheorem{lem}[defi]{Lemma}
\newtheorem{cor}[defi]{Corollary}
\newtheorem{prop}[defi]{Proposition}
\newcommand{\N}{\ensuremath{\mathbb{N}}}
\newcommand{\R}{\ensuremath{\mathbb{R}}}
\newcommand{\norm}[2][]{\ensuremath{\left\|#2\right\|_{#1}}} 
\newcommand{\snorm}[2][]{\ensuremath{|#2|_{#1}}}
\newcommand{\snormlr}[2][]{\ensuremath{\left|#2\right|_{#1}}}
\newcommand{\scalarprod}[2]{\ensuremath{\left\langle{#1,#2}\right\rangle}} 
\newcommand{\curlH}{\ensuremath{\mathcal{H}}}
\newcommand{\curlF}{\ensuremath{\mathcal{F}}}
\newcommand{\curlX}{\ensuremath{\mathcal{X}}}
\newcommand{\curlM}{\ensuremath{\mathcal{M}}}
\newcommand{\curlP}{\ensuremath{\mathcal{P}}}
\newcommand{\curlL}{\ensuremath{\mathcal{L}}}
\newcommand{\curlA}{\ensuremath{\mathcal{A}}}
\newcommand{\curlE}{\ensuremath{\mathcal{E}}}
\newcommand{\curlY}{\ensuremath{\mathcal{Y}}}
\newcommand{\curlZ}{\ensuremath{\mathcal{Z}}}
\newcommand{\curlJ}{\ensuremath{\mathcal{J}}}
\newcommand{\curlI}{\ensuremath{\mathcal{I}}}
\newcommand{\curlC}{\ensuremath{\mathcal{C}}}
\newcommand{\supp}{\ensuremath{\operatorname{supp}}}
\newcommand{\gammalim}{\ensuremath{\operatorname*{\Gamma-lim}}}
\newcommand{\e}{\varepsilon}
\newlength{\leftstackrelawd}
\newlength{\leftstackrelbwd}
\def\leftstackrel#1#2{\settowidth{\leftstackrelawd}%
	{${{}^{#1}}$}\settowidth{\leftstackrelbwd}{$#2$}%
	\addtolength{\leftstackrelawd}{-\leftstackrelbwd}%
	\leavevmode\ifthenelse{\lengthtest{\leftstackrelawd>0pt}}%
	{\kern-.5\leftstackrelawd}{}\mathrel{\mathop{#2}\limits^{#1}}}
\def\RelEnt{\mathscr{H}}
\def\RF{\mathscr{R}}
\def\I{\mathscr{I}}
\def\LDJ{\mathcal{J}}
\def\Cz{{c_0}}
\def\Cb{{\ell^\infty}}
\newcommand{\calF}{\mathscr{F}}
\newcommand{\calL}{\mathscr{L}}
\title{An inequality connecting entropy distance, Fisher Information and large deviations}
\author{Bastian Hilder, Mark A.\ Peletier, Upanshu Sharma, Oliver Tse}
\date{\today}
\begin{document}
\maketitle
	
\begin{abstract}
In this paper we introduce a new generalisation of the relative Fisher Information for Markov jump processes on a finite or countable state space, and prove an inequality which connects this object with the  relative entropy and a large deviation rate functional. In addition to possessing various favourable properties, we show that this \emph{generalised Fisher Information} converges to the classical Fisher Information in an appropriate limit.  We then use this generalised Fisher Information and the aforementioned inequality to qualitatively study coarse-graining problems for jump processes on discrete spaces. 
\end{abstract}
	
\noindent\textbf{Keywords.} Markov process, relative entropy, Fisher Information, large deviations

\noindent\textbf{Mathematics Subject Classification (2010).} 28A33, 34C29, 34D15, 46N20, 49J40, 60B10, 60F10, 60J27, 60J28

\section{Introduction}\label{sec:introduction}	
Lyapunov functions are important tools in the study of evolution equations. The relative entropy, which for two probability measures $\mu,\rho\in\curlP(\curlX)$ is given by
\begin{align}\label{DFIR-def:Intro-RelEnt}
\RelEnt(\mu \vert \rho) = \begin{dcases}
\int_\curlX f \log f \,d\rho , & \text{if } f = \dfrac{d\mu }{d\rho} \text{ exists}, \\
+ \infty, & \text{otherwise,}
\end{dcases}
\end{align}
is one such Lyapunov function that plays a crucial role in the study of forward Kolmogorov equations. 
These equations describe the evolution of the distribution of a Markov process. In recent years, extensive research has been devoted to the  study of the relative entropy and the Fisher Information (entropy production) which, amongst other things, are used to study the trend to equilibrium for both continuous~\cite{ArnoldCarrilloDesvillettesDolbeaultJuengelLedermanMarkowichToscaniVillani04,MichelMischlerPerthame05} and discrete state-space Markov processes~\cite{diaconisSaloffCoste96,bobkovTetali06}. 
Typically this involves studying  the time evolution of the relative entropy~\eqref{DFIR-def:Intro-RelEnt} where $\rho$ is the stationary solution and $\mu_t$ is the time-dependent solution of the forward Kolmogorov equation under consideration. 
Although it is not a metric on the space of probability measures, relative entropy has been used as a notion of distance to equilibrium due to its favourable properties and  natural connections to statistical physics.

As opposed to what was described above, in certain cases the relative entropy is also used to compare the time-dependent distributions of two different Markov processes.
In the context of hydrodynamic limits, Yau~\cite{yau91} uses the relative entropy to compare the evolution of finite particle evolution with certain local-Gibbs states. 
Legoll and Leli{\'e}vre~\cite{legollLelievre10} use relative entropy to compare an approximate solution with the true solution of a Fokker-Planck equation arising in molecular dynamics, and Bogachev et al.\ \cite{bogachevRoecknerShaposhnikov16} compare solutions of two different Fokker-Planck equations in the context of mean-field games.

It has recently been shown~\cite{DLPSS-TMP} that the relative entropy comparing an arbitrary time-dependent probability measure to the solution of a Fokker-Planck equation is directly linked to the Fisher Information and the large-deviation rate functional via an inequality.
We refer to~\cite[Chapter 2]{Sharma17} for a detailed overview.
In~\cite{duongLamaczPeletierSharma17} the authors present a new variational approach that uses this inequality to qualitatively study coarse-graining problems in (nonlocal) Fokker-Planck equations. 
In~\cite{DLPSS-TMP} this inequality has been used to quantitatively estimate coarse-graining errors. 

While all the aforementioned references deal with diffusion processes, not much is known about the the relative entropy of two time-dependent distributions for jump processes.
In recent years, for processes on discrete spaces, new Wasserstein-like gradient-flow structures with relative entropy as  the driving functional have been discovered~\cite{maas11,mielke11,mielke13,chowHuangLiZhou,erbarFathiLaschosSchlichting16}. 
In this paper we ask if the ideas described above for the continuous case can be generalised to  the discrete case, specifically for Markov jump processes:
\begin{center}
	Starting with Markov jump processes, can the relative entropy of two time-dependent curves be connected to the large-deviation rate functional? Furthermore, can this connection be exploited to study coarse-graining problems?
\end{center}	
In this paper we provide an answer to these questions by generalising the notion of Fisher Information for Markov processes. In addition to studying its properties, we will show that this generalised Fisher Information is naturally related to the relative entropy and the large-deviation rate functional. Finally we apply this inequality to study a coarse-graining problem on a discrete state space. 

\subsection{Relative Fisher Information and large-deviation rate functional}\label{sec-RFI-LDP}
Before we present our contributions to answering the questions mentioned above (see Section~\ref{sec:MainRes}), we introduce the classical relative Fisher Information and the large-deviation rate functional. Unlike the relative entropy, these two objects explicitly depend on the evolution equation under consideration. 

In this paper we are interested in jump processes on a \emph{finite or countable state space}~$\curlX$. The law of the process $\rho : [0,T] \rightarrow \curlP(\curlX)$ satisfies the evolution equation
\begin{align}
\begin{dcases}
\partial_t \rho = L^T \rho, \\
\rho_{t = 0} = \rho_0,
\end{dcases}
\label{eq:forwardKolmogorovEquation}
\end{align}
in the space of probability measures $\curlP(\curlX)$. 
In equation~\eqref{eq:forwardKolmogorovEquation},  $L^T$ is the adjoint of $L: \Cz(\curlX) \rightarrow \Cz(\curlX)$, the generator of the process.
Since $\curlX$ is discrete, we use matrix notation and  write the operator $L$ as a (potentially infinite) matrix $L\in\R^{\curlX\times\curlX}$. The generator $L$ satisfies 
\begin{subequations}
	\label{eq:conditionsGenerator}
	\begin{align}
	&(L1)
	\qquad \ L(x,y) \geq 0 \text{ for all } x \neq y \text{ and } 
	\sum_{y \in \curlX} L(x,y) = 0 \text{ for all } x \in \curlX, 
	\label{eq:conditionsGenerator1}\\
	&(L2) \qquad \sup_{x\in \curlX} |L(x,x)|<\infty,
	\label{eq:conditionsGenerator2} \\
	&(L3) \qquad L \text{ is irreducible}.
	\label{eq:conditionsGenerator3}
	\end{align}
\end{subequations}
These conditions are sufficient for $L$ to be a bounded Markov operator $L:\Cz(\curlX)\to \Cz(\curlX)$, where $c_0(\curlX)$ is the Banach space of functions on $\curlX$ that converge to zero outside of large compact subsets of $\curlX$, equipped with the supremum norm.  Since $L^T$  generates a uniformly continuous semigroup in $\ell^1(\curlX)$~\cite[Proposition 2.11]{engelNagel06}, equation~\eqref{eq:forwardKolmogorovEquation}  admits a unique solution $\rho\in \curlC^1([0,T];\ell^1(\curlX))$~\cite[Theorem 6.6]{engelNagel06}; since equation~\eqref{eq:forwardKolmogorovEquation} preserves non-negativity and total mass, we have $\rho\in \curlC([0,T],\curlP(\curlX))$ whenever $\rho_0\in \curlP(\curlX)$. 

\begin{rem}\label{rem:weak-strong-equivalence}
	The space $\curlP(\curlX)$ is a subset of $\ell^1(\curlX)$, and the weak measure topology on $\curlP(\curlX)$ coincides with the $\sigma(\ell^1,\ell^\infty)$-topology on $\ell^1(\curlX)$. 
	Recall that by Schur's theorem, weak and strong \emph{convergence} on $\ell^1(\curlX)$ are the same, even though the weak and strong topologies may be different; therefore functions $f:[0,T]\to\ell^1(\curlX)$ are strongly continuous if and only they are weakly continuous. Since `weak measure convergence' in $\curlP(\curlX)$ is the same as the $\sigma(\ell^1,\ell^\infty)$-convergence in $\ell^1(\curlX)$, we will omit the term `weak' in our discussion and notation, and simply talk about `continuous' functions from $[0,T]$ to $\curlP(\curlX)$ or to $\ell^1(\curlX)$.
\end{rem}

The classical definition  of `relative Fisher Information' arises from the time derivative of the relative entropy along two solutions of~\eqref{eq:forwardKolmogorovEquation}. Indeed, for two positive solutions $\mu,\rho$ of~\eqref{eq:forwardKolmogorovEquation}, we have 
\begin{align}\label{eq:Ent-Fish-Rel}
\dfrac{d}{dt} \RelEnt(\mu_t \vert \rho_t) = -\RF_L(\mu_t \vert \rho_t), 	
\end{align}
where $\mu_t, \rho_t$ denote the time slice at time $t$, and the right-hand side is defined as follows.

\begin{defi}
	For $\mu,\rho\in \curlP_+(\curlX)$, the set of strictly positive probability measures, the (classical) \emph{relative Fisher Information} is defined as
	\begin{equation}
	\label{eq:RFIBasicDef}
	\RF_L(\mu \vert \rho) 
	:=  \sum_{x,y \in \curlX} \rho(x) L(x,y) \left[v(y) - v(x) - v(x)\log\left(\frac{v(y)}{v(x)}\right)\right], \qquad v=\mu/\rho
	\end{equation}
\end{defi}

This sum is well-defined in $[0,\infty]$, since $L(x,y)\geq0$ for $x\not=y$, and the term between brackets is non-negative and vanishes if $x=y$.
Especially, the relative Fisher Information is non-negative.
This corresponds to the well-known fact that the relative entropy decays in time along two solutions of the same forward Kolmogorov equation (see~\cite[Theorem 1.1]{voigt81}).
It should be noted that the definition~\eqref{eq:RFIBasicDef} of the Fisher Information coincides with the classical notion of Fisher Information with respect to the stationary measure, i.e.\ when $L^T\rho=0$ (see~\cite[Equation 1.4]{bobkovTetali06}). Alternatively, the relative Fisher Information~\eqref{eq:RFIBasicDef} can also be seen as the Bregman divergence of the Fisher Information with respect to the stationary measure (see~\cite[Section 5.1]{hilder17} for details).  

Apart from the classical connection between (linear) Markov processes and forward Kolmogorov equations described above, the forward Kolmogorov equations can also be viewed as the many-particle limit of some underlying system of Markov processes. To make this precise, consider a  sequence $(X^n)_{n \in \N}$ of independent and identical Markov processes on state space $\curlX$ and generated by $L$. Under fairly general conditions (see for instance~\cite[Theorem 11.4.1]{dudley89}), the sequence of empirical measures
\begin{align}
\rho^N := \dfrac{1}{N} \sum_{i = 1}^N \delta_{X^i},
\label{eq:empiricalProcesses}
\end{align}
converges almost surely to the solution of~\eqref{eq:forwardKolmogorovEquation}. 

This convergence is the starting point for a large-deviation result. In particular it has been shown (see Theorem~\ref{thm:LDP} below) that the sequence $\rho^N$ has a \emph{large-deviation property} which characterises the probability of finding the empirical measure far from the limit $\rho$, written informally as  
\begin{align*}
\operatorname{Prob}(\rho^N \approx \rho) \sim e^{-N(\I_0(\rho_0) + \I_L(\rho))} \  \text{ as } N \rightarrow \infty,
\end{align*}
in terms of \emph{rate functionals} $\I_0$ and $\I_L$ of the initial data $(\rho^N_0)_{N \in \N}$ and the path $(t \mapsto \rho^N_t)_{N \in \N}$ respectively.
In this paper we will focus on $\I_L:\curlC([0,T];\curlP(\curlX))\rightarrow[0,\infty]$ which is given by
\begin{align}
\I_L(\rho) = \begin{dcases}
\int_0^T \curlL(\rho_t,\partial_t \rho_t) \,dt, & \text{ if } \rho \in A\curlC([0,T];\curlP(\curlX)), \\
+ \infty, & \text{ otherwise.}
\end{dcases}
\label{eq:rateFunctional}
\end{align}
Here $A\curlC([0,T];\curlP(\curlX))$ is the space of absolutely continuous trajectories in the space of probability measures (see Appendix~\ref{app:Bochner}).

The \emph{Lagrangian} $\curlL:\curlP(\curlX)\times \ell^1(\curlX)\rightarrow[0,\infty]$ in the definition above of $\I_L$ is non-negative and convex in its second argument, and satisfies $\curlL(\rho_t, \partial_t \rho_t) = 0$ if and only if  $\rho$ solves $\partial_t \rho_t = L^T \rho_t$. The rate functional~$\I_L$ therefore has the crucial properties
\begin{align}
\text{(a) } \I_L(\rho) \geq 0, \quad \text{ and }\quad \text{(b) }\rho \text{ solves }\eqref{eq:forwardKolmogorovEquation}\Longleftrightarrow \I_L(\rho) = 0,
\label{eq:propertiesRateFunctional}
\end{align}
and consequently the equation ``$\I_L(\rho)=0$'' can be viewed as a variational characterisation of the forward Kolmogorov equation.

The Lagrangian~$\curlL$ is defined as the Legendre dual of a \emph{Hamiltonian} $\curlH:\curlP(\curlX)\times \Cb(\curlX)\rightarrow[0,\infty]$,  
\begin{align}
\curlL(\mu, s) := \sup_{\xi \in \Cb(\curlX)} \left\{\sum_{x \in \curlX} \xi(x) s(x) - \curlH(\mu, \xi)\right\}.
\label{eq:L=H^*}
\end{align}
In our setting of a Markov process on a discrete state space with generator $L$, the Hamiltonian is explicitly given by
\begin{align}
\curlH(\mu, \xi) := \sum_{x,y \in \curlX} \mu(x) L(x,y) \left[e^{\xi(y) - \xi(x)} - 1\right],
\label{eq:hamiltonian}
\end{align}
and by Legendre duality it has the alternative characterization
\begin{align}
\curlH(\mu, \xi) = \sup_{s \in \ell^1(\curlX)} \left\{\sum_{x \in \curlX} \xi(x) s(x) - \curlL(\mu, s)\right\}.
\label{eq:legendreTransform}
\end{align}

The following result places the preceding remarks in a rigorous context. We denote the space of right-continuous functions with left limits mapping $[0,T]$ into $\curlP(\curlX)$ by $D_{\curlP(\curlX)}[0,T]$, and the dual pairing between $\Cb(\curlX)$ and $\curlP(\curlX)$ by $\langle f,\mu\rangle = \sum_{x \in \curlX} f(x)\mu(x)$ for any $f\in \Cb(\curlX)$ and $\mu\in \curlP(\curlX)$, then the following result holds.

\begin{theorem}\label{thm:LDP}
	Let $\rho^N\in\curlP(\curlX)$ be the empirical process~\eqref{eq:empiricalProcesses} generated by $N\in\N$ independent Markov processes $(X^i)_{i = 1, \dots N}$ on the state space $\curlX$ with generator $L$.
	Furthermore, assume that the initial values $(\rho^N_0)_{N \in \N}$ are deterministic and converge in $\curlP(\curlX)$ to some $\rho_0$.
	Then, $(\rho^N)_{N \in \N}$ satisfies a large deviations principle in $D_{\curlP(\curlX)}[0,T]$ with rate functional $\I_L:\curlC([0,T];\curlP(\curlX))\rightarrow\R$ given by~\eqref{eq:rateFunctional}, and which has the alternative representation
	\begin{align}
	\I_L(\mu) = \sup_{f \in \curlC^1([0,T];\Cb(\curlX))} \left\{\langle f_T, \mu_T\rangle - \langle f_0, \mu_0\rangle - \int_0^T \Bigl(\langle\partial_t f_t, \mu_t\rangle + \curlH(\mu_t, f_t)\Bigr) \,dt\right\}
	\label{eq:rateFunctionalTheorem}
	\end{align}
	where $\mu \in \curlC([0,T];\curlP(\curlX))$ with $\mu|_{t = 0} = \rho_0$ and the Hamiltonian $\curlH$ is defined in~\eqref{eq:hamiltonian}.  Additionally, if for some $\mu \in \curlC([0,T];\curlP(\curlX))$ we have $\I_L(\mu) < \infty$, then  $t\mapsto\mu_t\in \curlP(\curlX)$ is absolutely continuous, and the rate functional can be reformulated as 
	\begin{align}\label{eq:BoundRF}
	\I_L(\mu) = \sup_{ f \in L^\infty(0,T;\Cb(\curlX))} \int_0^T \Bigl(\langle f_t, \partial_t \mu_t\rangle - \curlH(\mu_t, f_t) \Bigr)\,dt.
	\end{align}
\end{theorem}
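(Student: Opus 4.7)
The plan is to proceed in three stages: first establish the LDP with the variational form~\eqref{eq:rateFunctionalTheorem}, then extract absolute continuity of finite-rate paths and reformulate the supremum to obtain~\eqref{eq:BoundRF}, and finally pass to the Lagrangian form~\eqref{eq:rateFunctional} via pointwise Legendre duality.

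\textbf{Stage 1: the LDP.} I would apply Sanov's theorem to the iid path-valued random variables $(X^i)_{i=1}^N$ on $D_{\curlX}[0,T]$, obtaining an LDP on $\curlP(D_{\curlX}[0,T])$ for the empirical measure on paths with rate $Q \mapsto \RelEnt(Q \mid P_L)$, where $P_L$ denotes the Markov law with initial datum $\rho_0$. Contraction along the continuous time-marginal map then transfers this to an LDP for $\rho^N$ in $D_{\curlP(\curlX)}[0,T]$. To identify the resulting rate function with~\eqref{eq:rateFunctionalTheorem} I would invoke the nonlinear Dynkin identity (in the Feng--Kurtz--L\'eonard style): for $f \in \curlC^1([0,T];\Cb(\curlX))$, the process
\[
M^N_t := \exp\Bigl(N\langle f_t,\rho^N_t\rangle - N\langle f_0,\rho^N_0\rangle - N\!\int_0^t\!\bigl(\langle \partial_s f_s,\rho^N_s\rangle + \curlH(\rho^N_s,f_s)\bigr)ds\Bigr)
\]
is a mean-one martingale for the empirical process, so an exponential-Chebyshev upper bound combined with the associated change-of-measure lower bound recovers~\eqref{eq:rateFunctionalTheorem} after optimization over $f$. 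Assumption~\eqref{eq:conditionsGenerator2} supplies the exponential tightness needed on the countable state space.

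\textbf{Stage 2: absolute continuity and~\eqref{eq:BoundRF}.} Suppose $\I_L(\mu) < \infty$. I would test~\eqref{eq:rateFunctionalTheorem} with $f(t,x) = \varphi(t)\xi(x)$ for $\varphi \in \curlC^1_c((0,T))$ and $\xi \in \Cb(\curlX)$ with $\inftynorm{\xi} \le 1$, and use the uniform bound $\curlH(\mu_t,\lambda\varphi(t)\xi) \le (e^{2\lambda\inftynorm{\xi}}-1)\sup_x |L(x,x)|$ together with optimization over the amplitude $\lambda$ to obtain an absolute-continuity control of $t \mapsto \langle\xi,\mu_t\rangle$ that is uniform in such $\xi$. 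Combined with Remark~\ref{rem:weak-strong-equivalence} and the duality $(\ell^1(\curlX))^* = \Cb(\curlX)$, this yields $\mu \in A\curlC([0,T];\ell^1(\curlX))$. Integration by parts in~\eqref{eq:rateFunctionalTheorem} then converts the boundary and $\partial_t f$ contributions into $\int_0^T \langle f_t,\partial_t\mu_t\rangle\,dt$, and a density argument approximating $L^\infty(0,T;\Cb(\curlX))$ by $\curlC^1([0,T];\Cb(\curlX))$ extends the supremum, giving~\eqref{eq:BoundRF}.

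\textbf{Stage 3: passage to~\eqref{eq:rateFunctional}.} By the definition~\eqref{eq:L=H^*} the integrand in~\eqref{eq:BoundRF} is pointwise dominated by $\curlL(\mu_t,\partial_t\mu_t)$, so $\I_L(\mu) \le \int_0^T \curlL(\mu_t,\partial_t\mu_t)\,dt$ immediately. The reverse inequality requires interchanging supremum and integral: since $\curlH(\mu,\cdot)$ is a normal convex integrand in $\xi$ and $t \mapsto (\mu_t,\partial_t\mu_t)$ is measurable, a measurable-selection theorem (Aumann / von Neumann) produces, for each $\varepsilon > 0$, a measurable family $t \mapsto f^\varepsilon_t \in \Cb(\curlX)$ realizing the pointwise Legendre supremum up to $\varepsilon$; inserting into~\eqref{eq:BoundRF} and letting $\varepsilon \to 0$ identifies~\eqref{eq:BoundRF} with~\eqref{eq:rateFunctional}. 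The main obstacle is Stage 1 on the countably infinite $\curlX$: establishing exponential tightness and rigorously identifying the rate functional via~\eqref{eq:rateFunctionalTheorem} both rely crucially on~(L2) to rule out mass escape to infinity; a close second is the measurable-selection step, where one must verify that the near-maximizers can be chosen jointly measurable in $t$ and uniformly enough bounded in $\Cb(\curlX)$ to keep the integrand admissible.
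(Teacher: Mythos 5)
Your overall architecture (LDP first, then absolute continuity and the $L^\infty$-in-time reformulation, then pointwise Legendre duality with a measurable selection to recover the Lagrangian form) matches the paper's, and your Stage 3 is essentially the paper's argument: it too picks pointwise $\e$-maximizers $g_t$, truncates them to $f^k_t := g_t\chi_{\{\norm[{\Cb(\curlX)}]{g_t}\le k\}}$ to stay in $L^\infty(0,T;\Cb(\curlX))$, and passes to the limit by dominated convergence. Stage 1, however, is a genuinely different and much heavier route. The paper does not prove the LDP at all; it cites \cite[Prop.~5.10]{kraaij18} and then spends its effort on a point you do not address: Kraaij's rate function is built from a Lagrangian defined by a supremum over $\Cz(\curlX)$ on the larger class $D\mhyphen A\curlC$, and the paper must show (by truncation and dominated convergence, plus \cite[Prop.~2.12]{kraaij18}) that this coincides with $\I_L$ as defined via $\Cb(\curlX)$ on $A\curlC$. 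Your Sanov-plus-contraction plan amounts to reproving the cited result, and as stated it has an unaddressed obstacle: the time-marginal map is not continuous on $D_{\curlX}[0,T]$ in the Skorokhod topology, so the contraction principle does not apply directly; making this rigorous (or running the exponential-martingale upper/lower bound programme on countable $\curlX$) is precisely the content of the reference the paper leans on.

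There is also a concrete gap in Stage 2. Testing with products $f(t,x)=\varphi(t)\xi(x)$ gives, at best, an absolute-continuity modulus for the scalar functions $t\mapsto\langle\xi,\mu_t\rangle$ that is uniform over $\norm[\infty]{\xi}\le1$. This does \emph{not} imply $\mu\in A\curlC([0,T];\ell^1(\curlX))$: to bound $\sum_k\norm[{\ell^1}]{\mu_{t_k}-\mu_{s_k}}=\sum_k\langle\xi_k,\mu_{t_k}-\mu_{s_k}\rangle$ over a disjoint family of intervals one must let the optimal direction $\xi_k=\operatorname{sign}(\mu_{t_k}-\mu_{s_k})$ vary with $k$, and in general $\sup_{\xi}\sum_k|\langle\xi,\mu_{t_k}-\mu_{s_k}\rangle|$ with a single $\xi$ is strictly smaller than $\sum_k\norm[{\ell^1}]{\mu_{t_k}-\mu_{s_k}}$ (the supports of the increments overlap). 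This is exactly why the paper's proof builds the test function $g_t=A\sum_k\operatorname{sign}(\mu_{t_k}-\mu_{s_k})\chi_{[s_k,t_k]}(t)$ with interval-dependent spatial profiles and devotes Corollary~\ref{cor:restriction_of_rate_functional} to approximating such piecewise-constant-in-time functions by admissible $\curlC^1([0,T];\Cb(\curlX))$ test functions. Your amplitude optimization (trading the linear gain $A\e$ against the Hamiltonian cost of order $e^{2A}\delta$) is the right mechanism and is the one the paper uses; you just need the richer class of test functions for it to produce $\ell^1$-valued absolute continuity rather than the weaker coordinatewise statement.
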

The existence of the large-deviation principle is a reformulation of~\cite[Proposition~5.10]{kraaij18}, while the main statement of the theorem is the alternative characterization~\eqref{eq:rateFunctionalTheorem}; we give the proof in  Appendix~\ref{App-ProofLDP}. Appendix~\ref{app:Bochner} collects some results on absolutely-continuous curves and integration. 

\subsection{Main results}\label{sec:MainRes}
As mentioned earlier, the main goal of this work is to connect relative entropy, Fisher Information and large-deviation rate functional in the context of Markov processes on a discrete state space. While the connection between the relative entropy and the rate functional is fairly classical,  
it does not connect to the Fisher Information. As pointed out earlier, these objects have been connected recently in the case when $\curlX = \R^n$ and $L$ is a diffusion operator via the inequality (see~\cite[Chapter 2]{Sharma17} and~\cite[Section 2.5]{DLPSS-TMP} for details)
\begin{align}
\RelEnt(\mu_T \vert \rho_T) + \int_0^T \RF_L(\mu_s \vert \rho_s) \,ds \leq \RelEnt(\mu_0 \vert \rho_0) + \I_L(\mu),
\label{eq:FIR}
\end{align}
where $\mu$ is a measure-valued curve (such that the right-hand side of the estimate is well defined) and $\rho$ solves $\partial_t\rho=L^T\rho$. In~\cite{Sharma17} this relation is called the free-energy--relative-Fisher-Information--rate-functional (FIR) inequality, a terminology that we will use throughout this paper. 

We shall demonstrate in Section~\ref{subsec:failureClassicalFIR} that such an inequality already fails in fairly simple situations for a Markov jump process.  
To get around this issue, we generalise the notion of the relative Fisher Information. 

\begin{subequations}
	\label{eq:genRFI}
	\begin{defi}
		\label{def:genRFI-MAP}
		Let $\lambda\in(0,1)$.
		We define the \emph{generalised relative Fisher Information} $\RF^\lambda_L: \curlP(\curlX)\times \curlP(\curlX) \to [0,\infty]$ (corresponding to a generator $L$) as follows.
		\begin{enumerate}
			\item
			If $\rho, \mu\in\curlP_+(\curlX)$ and $\sup_{x\in \curlX} \max\{\mu(x)/\rho(x), \rho(x)/\mu(x)\}<\infty$, then 
			\begin{align}
			\label{eq:genRFIa}
			\RF^\lambda_L(\mu \vert \rho) &:= \sum_{x,y \in \curlX}
			L(x,y) \dfrac{ \mu(y)}{\rho(y)} \rho(x) - \dfrac{1}{\lambda} \curlH\left(\mu, \lambda \log\left(\dfrac{\mu }{\rho}\right)\right)\\
			&=\sum_{x,y \in \curlX}
			L(x,y) \left[\dfrac{ \mu(y)}{\rho(y)} \rho(x) - \mu(x)
			-\frac1\lambda \left( \mu(x)^{1-\lambda}\rho(x)^\lambda 
			\left(\dfrac{\mu(y)}{\rho(y)}\right)^\lambda -\mu(x) \right)\right].
			\label{eq:genRFIb}
			\end{align}
			Here $\curlH$ is the Hamiltonian~\eqref{eq:hamiltonian} that arises in the context of large deviations. 
			
			\item 
			If $\rho,\mu\in \curlP(\curlX)$, then 
			\begin{equation}
			\label{eq:genRFIc}
			\RF^\lambda_L(\mu \vert \rho) := \sum_{x,y \in \curlX}
			L(x,y) \psi_\lambda(x,y),
			\end{equation}
			where $\psi_\lambda$ is defined as 
			\[
			\psi_\lambda(x,y) := \begin{cases}
			\dfrac{ \mu(y)}{\rho(y)} \rho(x) - \mu(x)
			-\dfrac1\lambda \left( \mu(x)^{1-\lambda}\rho(x)^\lambda 
			\left(\dfrac{\mu(y)}{\rho(y)}\right)^\lambda -\mu(x) \right),
			&\text{if $\rho(y)>0$, and $\rho(x)>0$},\\
			+\infty & \text{\parbox{0.25\textwidth}{if $\rho(y)=0$, $\rho(x)>0$,\\ \hspace*{47pt} and $\mu(y)>0$,}}\\
			0 & \text{otherwise}.
			\end{cases}
			\]
		\end{enumerate}
	\end{defi}
\end{subequations}	
Both these definitions of the generalised relative Fisher Information are consistent, i.e.\ whenever both definitions apply, they give the same value (see Lemma~\ref{l:props-gRFI}). To motivate these definitions, we use the characterisation \eqref{eq:rateFunctionalTheorem} of the rate functional and reason formally as follows.  Let $\mu : [0,T] \rightarrow \curlP(\curlX)$ be a smooth curve with $\I_L(\mu)<\infty$ and $\rho : [0,T] \rightarrow \curlP(\curlX)$ be a smooth solution of the forward Kolmogorov equation~\eqref{eq:forwardKolmogorovEquation} such that $\log(\mu/\rho)$ is sufficiently regular. 
Using $f=\lambda\log(\mu/\rho)$ with $\lambda \in (0,1)$ in \eqref{eq:rateFunctionalTheorem}, we obtain 
\begin{align*}
\dfrac{1}{\lambda} \I_L(\mu) 
&\geq \sum_{x \in \curlX} \log\left(\dfrac{\mu_T(x)}{\rho_T(x)}\right) \mu_T(x) + \sum_{x \in \curlX} \log\left(\dfrac{\mu_0(x)}{\rho_0(x)}\right) \rho_0(x) \\
&\hspace*{5em}- \int_0^T \left(\sum_{x \in \curlX} \partial_t \log\left(\dfrac{\mu_t(x)}{\rho_t(x)}\right) \mu_t(x) + \frac{1}{\lambda}\curlH\left(\mu_t, \lambda\log\left(\dfrac{\mu_t}{\rho_t}\right) \right)\right)dt\\
&= \RelEnt(\mu_T \vert \rho_T) - \RelEnt(\mu_0 \vert \rho_0) +\int_0^T\left( 
\sum_{x,y \in \curlX} L(x,y) \mu_t(y)\dfrac{ \rho_t(x)}{\rho_t(y)} - \dfrac{1}{\lambda} \curlH\left(\mu_t, \lambda \log\left(\dfrac{\mu_t }{\rho_t}\right)\right) dt
\right),
\end{align*}
where the equality follows since
\begin{align*}
\sum_{x \in \curlX} \partial_t \log\left(\dfrac{\mu_t(x)}{\rho_t(x)}\right) \mu_t(x)=\sum_{x \in \curlX} \partial_t\mu_t(x) - \sum_{x \in \curlX} \dfrac{\mu_t(x)}{\rho_t(x)}(L^T\rho)(x) = 0-\sum_{x,y \in \curlX} L(x,y)\mu_t(y)\dfrac{\rho_t(x)}{\rho_t(y)}.
\end{align*}
The formal inequality above resembles~\eqref{eq:FIR}, where the integrand in the time integral is precisely the generalised Fisher Information given in \eqref{eq:genRFIa}. 
These formal calculations can and will be made rigorous, resulting in the first main result of this article which we now state. 
\begin{theorem}\label{thm:FIR}
	Let $\rho\in A\curlC([0,T];\curlP(\curlX))$ be a solution of \eqref{eq:forwardKolmogorovEquation} and $\mu \in \curlC([0,T];\curlP(\curlX))$ satisfy
	\begin{align*}
	\I_{L}(\mu)+\RelEnt(\mu_0\vert\rho_0)<\infty,
	\end{align*}
	with $\mu|_{t=0}=\mu_0$. 
	Then for any $\lambda \in (0,1)$ we have
	\begin{align}\label{eq:FIRwGeneralisedRFI}
	\RelEnt(\mu_T \vert \rho_T) + \int_0^T \RF_L^\lambda(\mu_t \vert \rho_t) \,dt \leq \RelEnt(\mu_0 \vert \rho_0) + \dfrac{1}{\lambda} \I_L(\mu).
	\tag{FIR$_\lambda$}
	\end{align}
\end{theorem}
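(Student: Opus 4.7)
The plan is to insert the formally optimal test function $f_t := \lambda\log(\mu_t/\rho_t)$ into the dual variational characterisation~\eqref{eq:BoundRF} of $\I_L$ and make rigorous the formal computation carried out just above the theorem statement. Since $\lambda\log(\mu_t/\rho_t)$ is in general neither bounded nor time-regular, I would first truncate and use, for each $k\in\N$,
\[
f^k_t(x):=\lambda\log\!\bigl(\max\{1/k,\min\{k,\mu_t(x)/\rho_t(x)\}\}\bigr)\in L^\infty\!\bigl(0,T;\ell^\infty(\curlX)\bigr),
\]
so that \eqref{eq:BoundRF} directly delivers
\[
\I_L(\mu)\;\geq\;\int_0^T\!\bigl[\langle f^k_t,\partial_t\mu_t\rangle-\curlH(\mu_t,f^k_t)\bigr]\,dt,
\]
after which I pass to the limit $k\to\infty$.

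The algebraic heart of the argument is the chain-rule identity
\[
\sum_{x\in\curlX}\log\!\left(\frac{\mu_t(x)}{\rho_t(x)}\right)\partial_t\mu_t(x)\;=\;\frac{d}{dt}\RelEnt(\mu_t\vert\rho_t)\;+\;\sum_{x,y\in\curlX}L(x,y)\,\rho_t(x)\,\frac{\mu_t(y)}{\rho_t(y)},
\]
which uses $\partial_t\rho_t=L^T\rho_t$, total-mass conservation $\sum_x\partial_t\mu_t(x)=0$, and the elementary identity $\tfrac{d}{dt}(r\log r-r)=(\log r)\,\dot r$. Together with the explicit formula
\[
\curlH\bigl(\mu_t,\lambda\log(\mu_t/\rho_t)\bigr)=\sum_{x,y}L(x,y)\Bigl[\mu_t(x)^{1-\lambda}\rho_t(x)^\lambda\bigl(\tfrac{\mu_t(y)}{\rho_t(y)}\bigr)^\lambda-\mu_t(x)\Bigr],
\]
these two pieces reassemble exactly into $\lambda\,\RF_L^\lambda(\mu_t\vert\rho_t)$ as given by \eqref{eq:genRFIa}; integrating in time, dividing by $\lambda$, and rearranging then yields \eqref{eq:FIRwGeneralisedRFI}.

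The main technical obstacle is justifying the above calculation at the truncated level, where $f^k$ appears in place of $\lambda\log(\mu/\rho)$, and then passing to the limit. The strategy is to rearrange the truncated integrand $\langle f^k_t,\partial_t\mu_t\rangle-\curlH(\mu_t,f^k_t)$ as the sum of the time derivative of a truncated entropy and a remainder which is pointwise non-negative, monotone non-decreasing in $k$, and converges pointwise to the integrand of $\lambda\,\RF_L^\lambda(\mu_t\vert\rho_t)$; monotone convergence then delivers the Fisher-Information contribution. The boundary terms at $t=0$ and $t=T$ first appear with the truncated density ratio; $\RelEnt(\mu_0\vert\rho_0)<\infty$ and dominated convergence handle $t=0$, while at $t=T$ Fatou's lemma applied to the non-negative Bregman-type integrand $r\log r-r+1$ ensures that $\RelEnt(\mu_T\vert\rho_T)$ appears as a lower bound. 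Finally, sites at which $\rho_t(x)$ or $\mu_t(x)$ vanishes are accommodated through the case split in Definition~\ref{def:genRFI-MAP}(2): any such site contributing the value $+\infty$ to $\RF_L^\lambda$ would, via the truncated inequality, already force $\I_L(\mu)=+\infty$ and contradict the hypothesis, so only sites contributing $0$ remain and the inequality is preserved in the limit.
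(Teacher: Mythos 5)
Your overall route coincides with the paper's: regularise the formal optimiser $\lambda\log(\mu_t/\rho_t)$, insert it into the dual representation \eqref{eq:BoundRF}, apply a chain rule in $t$, and pass to the limit; your chain-rule identity and the reassembly of the integrand into $\lambda\RF_L^\lambda$ are correct. The gaps are in the limit passage, which is where all the content of the theorem lies. First, your claim that the truncated remainder is \emph{monotone non-decreasing in $k$} is false. Writing $T_k(r)=\max\{1/k,\min\{k,r\}\}$ and $v=\mu_t/\rho_t$, the remainder per pair $(x,y)$ works out to
\begin{equation*}
S_k(x,y)=\rho(x)\Bigl[T_k(v(y))-T_k(v(x))-\frac{v(x)}{\lambda}\Bigl(\Bigl(\frac{T_k(v(y))}{T_k(v(x))}\Bigr)^{\lambda}-1\Bigr)\Bigr],
\end{equation*}
and already for $v(x)=4$, $v(y)=1$, $\lambda=1/2$ one computes $S_2\approx1.343>1=S_4$; so monotone convergence cannot deliver the Fisher-information term. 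Second, the property that would save the argument --- pointwise non-negativity of $S_k$ --- is asserted without proof, and it is the crux. It does hold, but only because the truncation levels at $x$ and $y$ are matched: $T_k(v(y))\le T_k(v(x))$ whenever the upper cut-off is active at $x$, and $T_k(v(y))\ge T_k(v(x))$ whenever the lower one is, so that $S_k(x,y)\ge \rho(x)\lambda^{-1}r_\lambda(T_k(v(x)),T_k(v(y)))\ge0$ by the monotonicity of $c\mapsto-c\bigl((b/a)^\lambda-1\bigr)$ and Lemma~\ref{lem:r_lambda}. With this established, Fatou's lemma (not monotone convergence) yields the liminf for the Fisher-information term, since $S_k\to\psi_\lambda$ pointwise for $t>0$. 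This verification is a genuinely missing piece, not a routine detail.

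Third, you never invoke the strict positivity $\rho_t(x)>0$ for all $t>0$ (Lemma~\ref{lem:Pos-fKol}), which is needed even to define your test functions $f^k_t$; and your closing argument for sites where $\rho_t$ vanishes (``a $+\infty$ contribution would force $\I_L(\mu)=+\infty$'') is circular, since it appeals to the inequality being proved. For comparison, the paper's proof sidesteps all of this with a different regularisation: it replaces $\rho$ by $\rho^\e:=\rho+\e\mu$, so that $v^\e=\mu/\rho^\e\le1/\e$ is bounded from above \emph{without distorting the algebra} --- the remainder is then exactly $\RF_L^\lambda(\mu_t\,\vert\,\rho_t^\e)\ge0$ plus an $O(\e)$ error --- uses only the one-sided cut-off $\log(v^\e+\delta)$, removes $\delta$ by dominated convergence thanks to the uniform bound on $v^\e$, and finally sends $\e\to0$ using Fatou together with the lower semicontinuity of $\RF_L^\lambda$ from Lemma~\ref{l:props-gRFI}. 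Your two-sided truncation can be made to work, but the non-negativity argument must be supplied and the monotone-convergence step replaced.
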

It is important to note that the roles of $\mu$ and $\rho$ in the FIR inequality~\eqref{eq:FIRwGeneralisedRFI} cannot be interchanged, i.e.\  $\mu$ is a solution to the forward Kolmogorov equation and $\rho$ is arbitrary, since the relative entropy is not symmetric. 
As evident from the formal calculations above, the generalised relative Fisher Information~\eqref{eq:genRFI} is constructed such that  the proof of the FIR inequality goes through. 
In addition to satisfying~\eqref{eq:FIRwGeneralisedRFI}, the generalised Fisher Information has several favourable properties which we now summarise (see Section~\ref{subsec:motivationAndProofFIR}--\ref{subsec:propetiesGenRFI} for details).
\begin{theorem}\label{thm:RFproperties}
	For $\lambda\in(0,1)$, the generalised Fisher Information satisfies:		
	\begin{enumerate}[label=(\roman{*})]
		\item $\RF^\lambda_L$ is non-negative and lower-semicontinuous on $\curlP(\curlX)\times\curlP(\curlX)$.
		\item If $\mu,\rho\in\curlP(\curlX)$ with $\RF^\lambda_L(\mu \vert \rho)=0$, then $\mu$ is a constant multiple of $\rho$ on each connected component of the support of $\rho$. 
		In particular, if $\rho\in\curlP_+(\curlX)$, then $\mu=\rho$ on $\curlX$.
		\item $\RF_L^\lambda \to \RF_L$ as $\lambda\to0$ on $\curlP_+(\curlX) \times \curlP_+(\curlX)$ in the sense of Gamma convergence.
	\end{enumerate}
\end{theorem}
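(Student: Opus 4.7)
The plan is to treat the three assertions separately while exploiting a single structural observation: definition~\eqref{eq:genRFIc} is a sum over $(x,y)$ of non-negative terms weighted by $L(x,y)\geq 0$, so the behaviour of each summand $\psi_\lambda(x,y)$ controls everything. Throughout it is convenient to write $a=\mu(x)$ and $b=\mu(y)\rho(x)/\rho(y)$ whenever $\rho(x),\rho(y)>0$, so that $\psi_\lambda(x,y)=(b-a)-\tfrac{1}{\lambda}(a^{1-\lambda}b^\lambda - a)$.

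For part~(i), pointwise non-negativity of $\psi_\lambda$ on the branch $\rho(x),\rho(y)>0$ follows from the weighted arithmetic--geometric-mean inequality $a^{1-\lambda}b^\lambda\le (1-\lambda)a+\lambda b$ valid for $a,b\ge 0$ and $\lambda\in[0,1]$, which rearranges to $\tfrac{1}{\lambda}(a^{1-\lambda}b^\lambda-a)\le b-a$; the remaining branches in definition~\eqref{eq:genRFIc} are either $0$ or $+\infty$, and summing gives $\RF^\lambda_L\ge 0$. For lower semicontinuity I would take $(\mu_n,\rho_n)\to(\mu,\rho)$ in $\curlP(\curlX)\times\curlP(\curlX)$, which by Remark~\ref{rem:weak-strong-equivalence} converges in $\ell^1$ and hence coordinatewise, and verify $\liminf_n \psi_\lambda(x,y;\mu_n,\rho_n)\ge \psi_\lambda(x,y;\mu,\rho)$ by a case split on the definition: continuity on the open set $\rho(x)\rho(y)>0$; divergence $b_n\to\infty$ forcing $\psi_\lambda\to +\infty$ on the $+\infty$-branch; and pointwise non-negativity trivially matching the target value $0$ on all other cases. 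Fatou's lemma applied to the counting measure weighted by $L(x,y)\ge 0$ then upgrades this to lower semicontinuity of the full sum.

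For part~(ii), if $\RF^\lambda_L(\mu\vert\rho)=0$ then every term $L(x,y)\psi_\lambda(x,y)$ vanishes. The auxiliary function $\varphi(t):=t-1-\tfrac{1}{\lambda}(t^\lambda-1)$ is strictly convex on $(0,\infty)$ (its second derivative is $(1-\lambda)t^{\lambda-2}>0$) with a unique zero at $t=1$, so on every edge $L(x,y)>0$ with $\rho(x),\rho(y)>0$ either $a=\mu(x)=0$---which by direct inspection of the formula at $a=0$ forces $\mu(y)=0$---or $\mu(y)/\rho(y)=\mu(x)/\rho(x)$. The $+\infty$-branch additionally excludes any $L$-transition from $\supp\rho$ into its complement carrying positive $\mu$. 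Propagating these relations along chains in the graph $\{(x,y):L(x,y)>0\}$ restricted to $\supp\rho$ shows $\mu/\rho$ is constant (possibly $0$) on each connected component; when $\rho\in\curlP_+(\curlX)$, irreducibility~\eqref{eq:conditionsGenerator3} collapses $\curlX$ to a single component, and probabilistic normalisation pins the constant at $1$, yielding $\mu=\rho$.

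For part~(iii), I would exploit monotonicity of $\lambda\mapsto\psi_\lambda(x,y)$ at fixed $\rho(x),\rho(y)>0$: setting $v=\mu/\rho$ and $u=\log(v(y)/v(x))$, the correction reads $\rho(x)v(x)(e^{\lambda u}-1)/\lambda$, and direct differentiation shows $\lambda\mapsto(e^{\lambda u}-1)/\lambda$ is non-decreasing on $(0,1)$, so $\psi_\lambda\uparrow\psi_0$ as $\lambda\downarrow 0$, where $\psi_0(x,y)=\rho(x)[v(y)-v(x)-v(x)\log(v(y)/v(x))]$ is the classical integrand. The $\Gamma$-$\limsup$ inequality then holds with the constant recovery sequence $(\mu,\rho)$ by monotone convergence, and the $\Gamma$-$\liminf$ follows from Fatou: along any $(\mu_n,\rho_n)\to(\mu,\rho)$ in $\curlP_+(\curlX)\times\curlP_+(\curlX)$ with $\lambda_n\to 0$, strict positivity of the limits and coordinatewise convergence force $\psi_{\lambda_n}(x,y;\mu_n,\rho_n)\to \psi_0(x,y;\mu,\rho)$ pointwise in $(x,y)$. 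The main subtlety I anticipate is the countably infinite case: the pointwise analyses are elementary, but the interchange of $\liminf$ with the infinite sum in parts~(i) and~(iii) leans on non-negativity and monotonicity rather than any \emph{a priori} integrability, and in the $+\infty$-branches of~(i) one must verify that blow-up of the explicit formula is consistent with the prescribed definition value.
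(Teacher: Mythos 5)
Your proposal is correct and follows essentially the same route as the paper's proofs (Lemmas~\ref{lem:r_lambda}, \ref{l:props-gRFI}, \ref{l:R=0} and \ref{l:RFI-convergence}): Young's inequality for the summands and a case split plus Fatou for (i), the strict-convexity/unique-zero property of the per-edge term propagated along the connectivity graph for (ii), and monotonicity in $\lambda$ of the summands combined with a constant recovery sequence for (iii). The only noteworthy deviation is in the $\Gamma$-liminf of (iii), where you assert direct pointwise convergence $\psi_{\lambda_n}\to\psi_0$ on $\curlP_+(\curlX)\times\curlP_+(\curlX)$ before applying Fatou --- this is valid (the limits are strictly positive, so the ratios converge to finite positive values) and slightly shorter than the paper's $\e$-perturbation argument on the ratio $s_\lambda$.
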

Whenever two measures $\rho$ and $\mu$ satisfy $\RF^\lambda_L(\mu \vert \rho)=0$, Theorem~\ref{thm:RFproperties}(ii) provides information on how they are related, similar to that of a logarithmic version of a Dirichlet form in continuous state spaces.
The name `generalised' Fisher Information is motivated by the fact that  we can recover the relative Fisher Information~\eqref{eq:RFIBasicDef} as a limit for $\lambda\rightarrow 0$ (cf.~Theorem~\ref{thm:RFproperties}(iii)). In addition to this asymptotic relation, the generalised and the classical relative Fisher Information can also be compared directly by an inequality in a fairly restrictive setting, thereby allowing us to prove a FIR inequality with the classical Fisher Information (see Section~\ref{sec:FIR-gFIR} for details). 

We point out that the FIR inequality bears similarity to the entropy-dissipation identity that arises in the context of reversible Markov processes and more generally gradient flows (see~\cite{mielkePeletierRenger14} for details). However in Theorem~\ref{thm:FIR} (and throughout this article) we do not assume the generator $L$ to be reversible and therefore our results go beyond the existing results on gradient flows. Additionally, the FIR inequality compares two curves, which is not the case for the entropy-dissipation identity.
\subsection{Application to coarse-graining  }\label{subsec:application} 
Coarse-graining is an umbrella term used for techniques which approximate a complex or high-dimensional system by a simpler or lower-dimensional one. While there are many formal techniques for achieving this (see~\cite{GivonKupfermanStuart04} and references therein), rigorous mathematical analysis is typically restricted to situations that exhibit explicit separation of temporal and/or spatial scales, i.e.\ the presence of fast and slow variables. In these situations, as the ratio of `fast' to `slow' increases, some form of averaging or homogenization allows one to remove the fast scales, and obtain a limiting system that focuses on the slow ones. Recently, a new variational technique based on studying the large-deviation rate functional has been introduced in~\cite{duongLamaczPeletierSharma17,Sharma17} to study coarse-graining limits arising in the context of diffusion processes (see Section~\ref{subsec:variationalFrameworkCG} for details). As mentioned earlier, in this paper we apply this variational technique to study a coarse-graining problem arising in the discrete setting (described below). The generalised Fisher Information~\eqref{eq:genRFI} and the FIR inequality~\eqref{eq:FIRwGeneralisedRFI} described in the last section play a crucial role in this study.

The coarse-graining problem we study here is inspired by kinetic Monte-Carlo methods in molecular dynamics (see~\cite[Chapter 5]{lahbabi13} for details). Consider a particle moving in a potential-energy landscape, which consists of small and large barriers as described in Figure~\ref{fig:energyLandscape}. The large energy barriers  introduce a natural scale-separation since it is harder for the particle to jump across them  compared to the smaller barriers. More precisely we can model the behaviour of such a particle as a Markov jump process on $\curlX = \curlY\times \curlZ$ where $\curlY$ corresponds to the states separated by the large energy barriers while $\curlZ$ is the part of the state space separated by small energy barriers. For simplicity, we assume that there is only one large barrier, i.e.\ $\curlY = \{0,1\}$ and finitely many small barriers corresponding to each of these large barriers, i.e.\ $\curlZ = \{1, \dots, n\}$. This intuitively means that the state space is divided up into two \emph{macro-states}, each of 
which contain $n\in\N$ easily accessible \emph{macro-states}.  

We consider the Markov process which evolves according to the generator  
\begin{align*}
\tilde L^\e= Q + \e C := \begin{pmatrix}Q_0 & 0 \\ 0 & Q_1\end{pmatrix} + \e \begin{pmatrix}D_0 & C_{0,1} \\
C_{1,0} & D_1 \end{pmatrix}, 
\end{align*}
where $Q$ and $C$ are $\e$-independent matrices with 
\begin{align}\label{CG-L-ass}
\forall x_1\in\curlX: \ \sum_{x_2\in\curlX}Q(x_1,x_2)=0=\sum_{x_2\in\curlX}C(x_1,x_2).
\end{align}
The diagonal matrix $D_y$, $y\in\curlY$, is constructed so that $C$ satisfies the aforementioned property, i.e.\ 
\begin{align*}
\forall z_1\in\curlZ: \ D_y(z_1,z_1):=-\sum_{z_2\in\curlZ} C_{y,1-y}(z_1,z_2).
\end{align*}
We assume that $Q_y$ is irreducible for every $y\in \curlY$ and $\tilde L^\e$ is irreducible. The irreducibility of $\tilde L^\e$ is equivalent to assuming that $C_{1,0}$ and $C_{0,1}$ have at least one positive entry.  
\begin{figure}
	\begin{center}
		\begin{tikzpicture}[scale=1.3]
		\draw [domain=-2.5:-0.5, samples=100] plot (\x, {cos(pi*\x r)*cos(pi*\x r)});
		\draw [domain=0.5:2.5, samples=100] plot (\x, {cos(pi*\x r)*cos(pi*\x r)});
		\draw [domain=-3:-2.5, samples=50] plot(\x, {3*cos(pi*\x r)*cos(pi*\x r)});
		\draw [domain=2.5:3, samples=50] plot(\x, {3*cos(pi*\x r)*cos(pi*\x r)});
		\draw [domain=-0.5:0.5, samples=50] plot(\x, {3*cos(pi*\x r)*cos(pi*\x r)});
		\draw (-3,-0.1) -- (-3,-0.3) (-3,-0.2)--(-1.5,-0.2) node[align=center,below=2pt]{macro-state}--(0,-0.2) (0,-0.1) -- (0,-0.3);
		\draw (1,-0.1) -- (1,-0.3) (1,-0.2)--(1.5,-0.2) node[align=center,below=2pt]{micro-state}--(2,-0.2) (2,-0.1) -- (2,-0.3);
		\end{tikzpicture}
		\caption{Energy landscape with two macro-states.}
		\label{fig:energyLandscape}
	\end{center}
\end{figure}
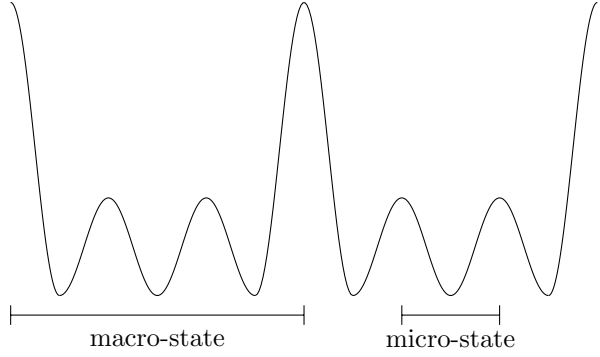

Now let us take a closer look at each of these components. The small parameter $\varepsilon > 0$ models the scale-separation arising due to the difference in the heights of the barriers. The matrix $Q_y\in\R^{n\times n}$ encodes the jumps  between macro-states within the $y$-th macro-state. The matrix
$C_{y,1-y}\in\R^{n\times n}$ encodes the transition from the $y$-th macro-state to $(1-y)$-th macro-state. The summability condition~\eqref{CG-L-ass} ensures that $\tilde L^\e$ is a generator, i.e.\ an operator satisfying \eqref{eq:conditionsGenerator1}.

When $\e$ is small, the dynamics of the particle evolving according to $\tilde L^\e$ splits into slow and fast components. The fast component moves the particle within a macro-state, and the slow component is visible as a rare jump to a different macro-state. Following~\cite{lahbabiLegoll13}, in order to focus on the slow component we rescale time by $\e^{-1}$ and arrive at 
\begin{align}
L^\e= \frac1\e Q +  C := \frac1\e\begin{pmatrix}Q_0 & 0 \\ 0 & Q_1\end{pmatrix} +  \begin{pmatrix}D_0 & C_{0,1} \\
C_{1,0} & D_1 \end{pmatrix}.	
\label{eq:fastReactionLimit}
\end{align}

The main goal of the second part of this work is to study the behaviour of the Markov jump process described by the forward Kolmogorov equation
\begin{align}
\begin{dcases}
\partial_t \mu^\varepsilon = (L^\varepsilon)^T \mu^\varepsilon, \\
\mu^\varepsilon_{t = 0} = \mu_0,
\end{dcases}
\label{eq:CGforwardKolmogorovEq}
\end{align}
in the limit  $\e\rightarrow 0$. In this limit it is natural to expect that the solution $\mu^\e$ equilibrates in each macro-state and the limit can be described by a jump process on $\curlY$, i.e.\ a two-point Markov jump process. In the second part of this article we make this intuition precise (see Section~\ref{sec:CGApplication} for details). 

To state the precise result we need to introduce two objects: (1) the stationary measure 
of~\eqref{eq:CGforwardKolmogorovEq}, denoted by $\pi^\e\in\curlP(\curlX)$, which exists since $L^\e$ is irreducible, and (2) the \emph{coarse-graining map} $\xi:\curlX\rightarrow\curlY$ as $\xi(x)=y$ for every $x=(y,z)\in\curlX$. 

For more details on this coarse-graining map see Section~\ref{sec:CGApplication}.
\begin{theorem}\label{thm:Intro-CG-Sol}
	Consider a sequence $\mu^\varepsilon\in \curlC([0,T];\curlP(\curlX))$ of solutions to~\eqref{eq:CGforwardKolmogorovEq}. Assume that the initial data satisfies
	\[
	\sup_{\e >0} \RelEnt(\mu_0^\e\vert\pi^\e) <\infty. 
	\]
	We then find for a subsequence (not relabeled)
	\begin{enumerate}
		\item(Compactness) The sequence  $\mu^\varepsilon \rightarrow \mu$ in $\curlM([0,T] \times \curlX)$, the space of non-negative, finite measures on $[0,T] \times \curlX$, with respect to the narrow topology, and $\xi_{\#}\mu^\varepsilon \rightarrow \xi_{\#}\mu$ in $\curlC([0,T];\curlP(\curlY))$ uniformly in time.
		\item (Local equilibrium) There exists $\hat\mu\in \curlC([0,T];\curlP(\curlY))$ such that for almost all $t\in [0,T]$ 
		\begin{align*}
		\forall y\in\curlY, \ A \subset \curlZ, \ \mu_t(\{y\}\times A) = \hat{\mu}_t(y) \pi_y(A),
		\end{align*}
		where for each $y\in\curlY$, $\pi_y \in \curlP(\curlZ)$ is the stationary measure corresponding to $Q_y$. Furthermore $\xi_{\#} \mu^\varepsilon \rightarrow \hat{\mu}$ in $\curlC([0,T];\curlP(\curlY))$ uniformly in time.
		\item (Limit dynamics) The limit $\hat\mu\in \curlC([0,T];\curlP(\curlY))$ solves 
		\[ \partial_t\hat\mu=L^T\hat\mu \]
		with the (limiting) generator 
		\begin{align*}
		L := \left(\begin{array}{cc}
		- \lambda_0 & \lambda_0 \\
		\lambda_1 & -\lambda_1
		\end{array}\right), \quad \lambda_y := \sum_{z,z' \in \curlZ} \pi_y(z) C_{y,1-y}(z,z').
		\end{align*}
	\end{enumerate} 
\end{theorem}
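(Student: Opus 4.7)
The plan is to use the FIR inequality of Theorem~\ref{thm:FIR} as the main tool, applied with the stationary measure $\pi^\e$ in the role of $\rho$ (which solves \eqref{eq:CGforwardKolmogorovEq} trivially) and $\mu^\e$ in the role of $\mu$. Since $\mu^\e$ itself solves the forward Kolmogorov equation, property~\eqref{eq:propertiesRateFunctional} gives $\I_{L^\e}(\mu^\e)=0$, and for any $\lambda\in(0,1)$ the inequality reduces on each sub-interval $[0,t]$ to
\begin{align*}
\RelEnt(\mu_t^\e\vert\pi^\e)+\int_0^t \RF_{L^\e}^\lambda(\mu_s^\e\vert\pi^\e)\,ds \leq \RelEnt(\mu_0^\e\vert\pi^\e),
\end{align*}
which is bounded uniformly in $\e$ and $t$ by hypothesis. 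The key structural observation is that the integrand $\psi_\lambda$ in Definition~\ref{def:genRFI-MAP} does not depend on $L$, and every off-diagonal $\psi_\lambda(x,y)$ is non-negative. Hence $\RF^\lambda$ is linear and non-negative in $L$, and the decomposition $L^\e=\tfrac1\e Q+C$ gives $\RF_{L^\e}^\lambda=\tfrac1\e \RF_Q^\lambda+\RF_C^\lambda$ with both pieces non-negative, from which
\begin{align*}
\int_0^T \RF_Q^\lambda(\mu_s^\e\vert\pi^\e)\,ds \le C\e \qquad\text{and}\qquad \int_0^T \RF_C^\lambda(\mu_s^\e\vert\pi^\e)\,ds \le C.
\end{align*}

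For the compactness in (1), since $\curlX$ is finite $\mu^\e$ lies in a compact subset of $\curlM([0,T]\times\curlX)$ and a narrow subsequential limit $\mu$ exists. For the projection $\xi_{\#}\mu^\e$ I would use the block-diagonality of $Q$ with respect to $\xi$ to see $\xi_{\#}[Q^T\mu^\e]\equiv 0$, so that $\partial_t\,\xi_{\#}\mu^\e=\xi_{\#}[C^T\mu^\e]$ is uniformly bounded in $\e$. This yields an $\e$-uniform Lipschitz-in-time bound on $\xi_{\#}\mu^\e$, hence uniform convergence to some $\hat\mu\in \curlC([0,T];\curlP(\curlY))$ by Arzela-Ascoli on the compact space $\curlP(\curlY)$. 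For the local equilibrium in (2), I extract a subsequential limit $\pi^\e\to\pi^0$ (possible by finiteness of $\curlX$); the identity $Q^T\pi^\e=-\e C^T\pi^\e$ forces $\pi^0\in\ker Q^T$, so $\pi^0=a^*(\pi_0,0)+b^*(0,\pi_1)$ for some $a^*,b^*\geq 0$. Combining $\int_0^T \RF_Q^\lambda(\mu_s^\e\vert\pi^\e)\,ds\to 0$ with the joint lower-semicontinuity of Theorem~\ref{thm:RFproperties}(i) and Fatou in time gives $\RF_Q^\lambda(\mu_s\vert\pi^0)=0$ for a.e.\ $s$. Applying Theorem~\ref{thm:RFproperties}(ii) on each $Q$-connected component $\{y\}\times\curlZ$ of $\supp\pi^0$ forces $\mu_s(\{y\}\times\cdot)$ to be a scalar multiple of $\pi_y$, and integrating over $\curlZ$ identifies that scalar with $\xi_{\#}\mu_s(y)=\hat\mu_s(y)$.

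For (3) I would pass to the limit $\e\to 0$ in the weak form of $\partial_t\,\xi_{\#}\mu^\e=\xi_{\#}[C^T\mu^\e]$, tested against $\varphi\in \Cb(\curlY)$. Using the local-equilibrium form $\mu_t(y,z)=\hat\mu_t(y)\pi_y(z)$ in the limit, the right-hand side reduces after a direct computation to
\begin{align*}
\frac{d}{dt}\langle\varphi,\hat\mu_t\rangle=\sum_{y\in\curlY}\hat\mu_t(y)\Bigl(\sum_{z,z'\in\curlZ}\pi_y(z)\,C_{y,1-y}(z,z')\Bigr)\bigl(\varphi(1-y)-\varphi(y)\bigr),
\end{align*}
which, upon recognising the inner sum as $\lambda_y$, is exactly the weak form of $\partial_t\hat\mu=L^T\hat\mu$ with the stated two-state generator. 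The main technical obstacle I anticipate is the local-equilibrium argument: to convert the $O(\e)$ Fisher-Information bound into a pointwise-in-time structural statement about $\mu_t$ one must disintegrate the limit $\mu$ in time and perform a joint passage to the limit $(\mu^\e,\pi^\e)\to(\mu,\pi^0)$ inside $\RF_Q^\lambda$. Here Theorem~\ref{thm:RFproperties}(i),(ii) is essential, and in particular one genuinely needs the connected-component formulation of (ii) because $\pi^0$ need not be strictly positive on $\curlX$.
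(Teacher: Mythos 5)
Your overall strategy is sound and, for the crucial local-equilibrium step, coincides with the paper's: apply the FIR inequality with $\rho=\pi^\e$, use that $\psi_\lambda$ is independent of $L$ so that $\RF^\lambda_{L^\e}=\e^{-1}\RF^\lambda_Q+\RF^\lambda_C$ with both terms non-negative, multiply by $\e$, and combine lower semicontinuity with Fatou to get $\RF^\lambda_Q(\mu_t\vert\pi^0)=0$ a.e. Your routes for compactness and for the limit dynamics are more elementary than the paper's: you exploit that $\mu^\e$ are exact solutions, so that $\partial_t\xi_\#\mu^\e=\xi_\#[C^T\mu^\e]$ is uniformly bounded (giving a Lipschitz-in-time bound instead of the paper's logarithmic modulus obtained from the rate-functional duality with test functions $\Indicator_{[s_1,s_2]}g\circ\xi/\lambda$), and you pass to the limit directly in the projected weak formulation instead of proving the liminf inequality $\liminf_\e\I_{L^\e}(\mu^\e)\ge\I_L(\hat\mu)$ and using $\I_L(\hat\mu)=0\Leftrightarrow\partial_t\hat\mu=L^T\hat\mu$. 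What the paper's route buys is that both steps survive for \emph{approximate} solutions with $\sup_\e\I_{L^\e}(\mu^\e)<\infty$; your route is shorter but only covers the exact-solution case stated in the theorem, which is acceptable here.

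There is, however, one genuine gap, and it sits exactly where you declare the problem solved. You write that $\pi^0=a^*(\pi_0,0)+b^*(0,\pi_1)$ with $a^*,b^*\ge 0$ and that the connected-component formulation of Theorem~\ref{thm:RFproperties}(ii) handles the possibility that $\pi^0$ is not strictly positive. It does not. If, say, $a^*=0$, then $\{0\}\times\curlZ$ is disjoint from $\supp\pi^0$; by Definition~\ref{def:genRFI-MAP} every term $\psi_\lambda(x,x')$ with $\pi^0(x)=\pi^0(x')=0$ vanishes identically, so $\RF^\lambda_Q(\mu_t\vert\pi^0)=0$ imposes \emph{no} constraint whatsoever on $\mu_t$ restricted to $\{0\}\times\curlZ$, and the asserted identity $\mu_t(\{0\}\times A)=\hat\mu_t(0)\pi_0(A)$ cannot be concluded. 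The connected components of $\supp\pi^0$ only see the macro-states on which $\pi^0$ charges mass. To close the gap you must prove $a^*,b^*>0$, i.e.\ $\pi^0\in\curlP_+(\curlX)$. This is the content of the paper's Lemma~\ref{lem:stationary-measures}: summing $(L^\e)^T\pi^\e=0$ over $z\in\curlZ$ for fixed $y$ annihilates the $\e^{-1}Q$ contribution exactly (row sums of $Q_y$ vanish), leaving a balance between the $C_{1-y,y}$ and $D_y$ terms; passing to the limit yields $-\alpha\lambda_0+(1-\alpha)\lambda_1=0$, hence $\alpha=\lambda_1/(\lambda_0+\lambda_1)\in(0,1)$ because irreducibility of $L^\e$ forces each $C_{y,1-y}$ to have a positive entry and each $\pi_y$ to be strictly positive, so $\lambda_y>0$. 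Only with this in hand do the two components $\{0\}\times\curlZ$ and $\{1\}\times\curlZ$ both lie in $\supp\pi^0$ and your application of Theorem~\ref{thm:RFproperties}(ii) deliver the full local-equilibrium statement. The remaining technical point you flag (converting the narrow convergence $\mu^\e\to\mu=\int_0^T\mu_t$ into an a.e.-in-$t$ statement inside $\RF^\lambda_Q$) is real but is handled at the same level of detail in the paper via disintegration and Fatou, so I do not count it against you.
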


Note that we do not specify the topology on $\curlP(\curlX)$ in this result, since $\curlX$ is finite and thus $\curlP(\curlX)$ is a subset of a finite-dimensional space (also see Remark \ref{rem:topologies}).
Furthermore, we point out that this result is a special case of our analysis in Section~\ref{sec:CGApplication}, which also applies to the case of \emph{approximate solutions} (see Remark~\ref{rem:Sol-BoundRF} for details).
\subsection{Comparison with other work}We now comment on the novelties developed in this paper compared with other work.
\begin{enumerate}
	\item \textit{In comparison with other works on the FIR inequality.}
	As mentioned earlier, the idea of an FIR inequality connecting the free energy (which, in our case, is the relative entropy), the relative Fisher Information and the large deviation rate functional was discussed in the context of diffusion processes~\cite{bogachevRoecknerShaposhnikov16,duongLamaczPeletierSharma17,DLPSS-TMP,Sharma17}, although most of these works do not explicitly refer to this inequality as the FIR inequality.
	Our contribution lies in the extension of the FIR inequality to the discrete settings which is substantially different from  the diffusion case treated in the references above.
	The main difference is that the Hamiltonian in the discrete case has a different scaling behaviour which ensures that  the classical FIR inequality fails in the discrete setting (see Section~\ref{subsec:failureClassicalFIR} for details). For a more detailed review of these connections see Section~\ref{sec:ContStateSpace}.
	\item \textit{In comparison with other work on the example treated in this paper.} 
	The coarse-graining example introduced in Section \ref{subsec:application} is an averaging problem for Markov chains~\cite{PavliotisStuart08,lahbabiLegoll13}. In these references, martingale techniques are used to prove a pathwise convergence result while our proof relies on the variational framework given by the large deviations result. Although the convergence result in this work is weaker, we obtain an explicit local-equilibrium statement and 
	our result also applies to approximate solutions, i.e.\ curves with finite rate functional, rather than zero. This  allows us to work with a larger class of measures (see Remark~\ref{rem:Sol-BoundRF}). This latter property also distinguishes our approach from other classical strategies  such as geometric singular perturbation theory, see for instance~\cite{kuehn15}. 
	\item \textit{Comparison with variational evolutionary methods.}
	In recent years,  variational-evolutionary structures akin to gradient flows have been developed for forward Kolmogorov equations on finite state-spaces~\cite{maas11,mielke11,mielke13,chowHuangLiZhou}. This structure can also be used to investigate singular limits ~\cite{sandierSerfaty04,serfaty11,mielke16}.
	However these structures are limited to reversible Markov chains, while the approach discussed in this paper does not require reversibility since we only use the variational structure provided by the large-deviations principle.
	\item \textit{Quantitative coarse-graining.} 
	As in the diffusion case~\cite{DLPSS-TMP,Sharma17}, a natural next step is to derive explicit error estimates for `finite' scale separation. However, the strategy to obtain those estimates does not use the full FIR inequality but only a related result inspired by~\cite{yau91} and is thus omitted in this paper. For details we refer to~\cite[Chapter 8]{hilder17}. 
\end{enumerate}

\subsection{Outline of the article}
In the rest of the paper we present the details of the ideas introduced above. 
In Section~\ref{sec:genRFIandFIR} we construct the generalised Fisher Information and prove the FIR inequality. In Section~\ref{sec:CGApplication} we study the coarse-graining problem using the variational technique developed in~\cite{duongLamaczPeletierSharma17}.
Section~\ref{sec:conclusionDiscussion} provides further discussions and generalisations and certain details on the rate functional are discussed in Appendix~\ref{App-ProofLDP}. In Appendix~\ref{app:Bochner} we collect some results on integration in infinite-dimensional spaces and in Appendix \ref{app:positivity} we provide a result on positivity of solutions for irreducible generators.

\section{Generalised relative Fisher Information and FIR inequality}\label{sec:genRFIandFIR}
In Section~\ref{subsec:failureClassicalFIR} we discuss a simple example where the FIR inequality fails when working with the classical relative Fisher Information~\eqref{eq:RFIBasicDef}, following which we prove the FIR inequality with the generalised relative Fisher Information~\eqref{eq:genRFI} in Section~\ref{subsec:motivationAndProofFIR}. We then prove the main properties of the the generalised Fisher Information in Section~\ref{subsec:propetiesGenRFI}. Finally in Section~\ref{sec:ContStateSpace} we connect these ideas to diffusions and compare to existing results in the literature. 

\begin{rem}[Extension to finite measures]\label{rem:finiteMeasures}
	We restrict the treatment in what follows to probability measures to keep the notation simple.		
	However, the definition as well as the properties of the generalised Fisher Information can be generalised to non-negative, finite measures with no additional difficulties.
\end{rem}

\subsection{Failure of FIR inequality with relative Fisher Information}\label{subsec:failureClassicalFIR}

Before we present the proof of the FIR inequality with the generalised Fisher Information (described in Theorem~\ref{thm:FIR}), we first show a simple example where such an inequality~\eqref{eq:FIR} fails when working with the `classical' relative Fisher Information~\eqref{eq:RFIBasicDef}.  Note that this is distinctly different from the case of diffusions on continuous state space where the FIR inequality holds for the relative Fisher Information (for a detailed discussion see Section~\ref{sec:ContStateSpace}).

The idea is to construct a sequence of curves for which the rate functional stays bounded while the classical relative Fisher Information is unbounded in the limit, which would prove that the FIR inequality does not hold in this setting. We consider a two-point space $\curlX=\{0,1\}$ and a generator given by
\begin{align*}
L=\begin{pmatrix}-a & a \\ b & -b\end{pmatrix},
\end{align*}
for $a,b>0$. Furthermore we consider a constant-in-time curve $\mu\in\curlP(\curlX)$. For any $f\in \Cb(\curlX)$ and $s:=f(0)-f(1)$, the Hamiltonian~\eqref{eq:hamiltonian} can be written as
\begin{align*}
\curlH(\mu, f) = a\mu(0) \left(e^{-s} - 1\right) + b(1-\mu(0)) \left(e^s -1\right).
\end{align*}
There exists a constant $c>0$ such that for any $f$ and $\mu$ we have $\curlH(\mu, f)>-c$. Therefore using the definition of the rate functional~\eqref{eq:BoundRF} we find
\begin{align*}
\forall \mu\in\curlP(\curlX): \ \I_L(\mu) = \sup_{f \in L^\infty([0,T];\Cb(\curlX))} \int_0^T - \curlH(\mu, f_t) \,dt \leq  cT.
\end{align*}
Next let us look at the classical relative Fisher Information~\eqref{eq:RFIBasicDef} with $\rho=(a+b)^{-1}(b,a) \in\curlP_+(\curlX)$ which satisfies $L^T\rho=0$. Writing $\mu = (\mu_0,1-\mu_0)$ and $\rho =(\rho_0,1-\rho_0)$ we find
\begin{align*}
\RF_L(\mu \vert \rho) &= a\left[\dfrac{(1-\mu_0) \rho_0}{1-\rho_0} - \mu_0 - \mu_0 \log\left(\dfrac{(1-\mu_0) \rho_0}{\mu_0 (1-\rho_0)}\right)\right]\\
&\hspace*{5em}+ b \left[\dfrac{\mu_0 (1-\rho_0)}{\rho_0} - (1-\mu_0) - (1-\mu_0) \log\left(\dfrac{\mu_0(1-\rho_0)}{(1-\mu_0)\rho_0}\right)\right].
\end{align*}
Choosing a sequence $(\mu^n)$ with $\mu_0^n\rightarrow 0$,  we have $\RF_L(\mu^n \vert \rho)\to \infty$, and therefore for any $C>0$, $\RF_L(\mu^n\vert\rho) \geq C \I(\mu)$ for a large enough $n$. As a result, the FIR inequality with the classical relative Fisher Information~\eqref{eq:FIR} does not hold in the discrete setting in general.  

\begin{rem}
	Note that this example did not exploit any pathological behaviour of the generator and works for all irreducible  generators $L$ on this two-point state space. Therefore we do not expect that there is a simple restriction on the class of admissible generators such that the FIR inequality~\eqref{eq:FIR} holds. A careful look at the example reveals that the FIR inequality fails since $\log(\mu_0/\rho_0) \rightarrow -\infty$ as $\mu_0 \rightarrow 0$, and if such choices of~$\mu$ are excluded than an FIR inequality with the relative Fisher Information might hold. This is indeed the case, as will be discussed in Lemma~\ref{lem:FIRrestrictedMeasure}. 
	
	On the other hand, the generalised relative Fisher Information~\eqref{eq:genRFI} does not suffer from the issue above since in this setting for any fixed $\lambda\in (0,1)$ we find
	\begin{align}
	\RF_L^\lambda(\mu \vert \rho) =  0 - \dfrac{1}{\lambda} \curlH\left(\mu, \lambda \log\left(\dfrac{\mu }{\rho}\right)\right) < \dfrac{c}{\lambda}.
	\end{align}
	It is not a coincidence that the FIR inequality holds for the generalised Fisher Information, as we prove below. 
\end{rem}

\subsection{FIR inequality with generalised relative Fisher Information}\label{subsec:motivationAndProofFIR}

In what follows we first prove an auxiliary lemma on the structure of the generalised relative Fisher Information, which we use in Lemma~\ref{l:props-gRFI} to study the consistency of its definition and discuss some simple properties. We conclude this section by giving the proof of Theorem~\ref{thm:FIR}. 

For any $\rho(y),\rho(x)>0$, the function $\psi_\lambda$ in \eqref{eq:genRFIc} may be rewritten as
\begin{align}\label{eq:psi_r}
\psi_\lambda(x,y) = \frac{r_\lambda(v(x),v(y))}{\lambda}\rho(x),\qquad v=\frac{\mu}{\rho},
\end{align}
where $(\xi,\eta)\mapsto r_\lambda(\xi,\eta):= (1-\lambda)\xi-\xi^{1-\lambda}\eta^\lambda + \lambda\eta$. 

\begin{lem}\label{lem:r_lambda}
	For any $\lambda\in(0,1)$, the function $r_\lambda :[0,\infty)\times [0,\infty)\to \R$ defined by 
	\[
	r_\lambda(\xi,\eta)=(1-\lambda)\xi-\xi^{1-\lambda}\eta^\lambda + \lambda\eta,
	\]
	satisfies the following properties:
	\begin{enumerate}[label=(\roman*)]
		\item $r_\lambda\ge 0$ on $[0,\infty)\times [0,\infty)$;
		\item $r_\lambda(\xi,\eta)=0$ if and only if $\xi=\eta$;
		\item For any $\xi,\eta\ge 0$, the function $\lambda\mapsto \lambda^{-1}r_\lambda(\xi,\eta)$ is monotonically decreasing on $(0,1)$;
		\item For any $\xi,\eta>0$, $\lim_{\lambda \rightarrow 0} \lambda^{-1}r_\lambda(\xi,\eta)=\eta-\xi+\xi\log(\frac{\xi}{\eta})$ monotonically increasing.
	\end{enumerate}
\end{lem}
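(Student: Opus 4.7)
The plan is to reduce all four claims to the convexity of the single auxiliary function $h(\lambda):=\xi^{1-\lambda}\eta^\lambda$. For $\xi,\eta>0$ we can write $h(\lambda)=\exp\!\bigl((1-\lambda)\log\xi+\lambda\log\eta\bigr)$, so $h$ is the composition of an affine function with $\exp$, hence convex on $[0,1]$, with $h(0)=\xi$, $h(1)=\eta$. The boundary cases $\xi=0$ or $\eta=0$ will be handled by direct computation throughout.

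For parts (i) and (ii) I would first assume $\xi,\eta>0$ and apply Jensen's inequality to $\exp$:
\[
h(\lambda)=\exp\!\bigl((1-\lambda)\log\xi+\lambda\log\eta\bigr)\le (1-\lambda)\xi+\lambda\eta,
\]
which is exactly $r_\lambda(\xi,\eta)\ge 0$; strict convexity of $\exp$ turns this into an equality if and only if $\log\xi=\log\eta$, i.e.\ $\xi=\eta$. Direct evaluation then gives $r_\lambda(0,\eta)=\lambda\eta$ and $r_\lambda(\xi,0)=(1-\lambda)\xi$, both non-negative and vanishing precisely when $\xi=\eta=0$, which covers all boundary cases.

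For (iii), assuming first $\xi,\eta>0$, I would rewrite
\[
\frac{r_\lambda(\xi,\eta)}{\lambda}=\frac{\xi}{\lambda}-\xi+\eta-\frac{h(\lambda)}{\lambda},
\]
differentiate in $\lambda$, and simplify $(h(\lambda)/\lambda)'=(\lambda h'(\lambda)-h(\lambda))/\lambda^2$ to obtain
\[
\frac{d}{d\lambda}\!\left(\frac{r_\lambda(\xi,\eta)}{\lambda}\right)=\frac{h(\lambda)-\lambda h'(\lambda)-h(0)}{\lambda^2}.
\]
The convexity of $h$ provides the tangent-line inequality $h(0)\ge h(\lambda)+h'(\lambda)(0-\lambda)$, which makes the numerator non-positive; hence $\lambda\mapsto r_\lambda/\lambda$ is non-increasing on $(0,1)$. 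The boundary cases are immediate: $r_\lambda(0,\eta)/\lambda=\eta$ is constant and $r_\lambda(\xi,0)/\lambda=\xi/\lambda-\xi$ is strictly decreasing. For (iv), I would Taylor-expand $h(\lambda)=\xi+\lambda\xi\log(\eta/\xi)+O(\lambda^2)$ as $\lambda\to 0^+$, substitute into $r_\lambda/\lambda$, and collect terms to obtain the limit $\eta-\xi+\xi\log(\xi/\eta)$; the monotonicity established in (iii) then ensures that this limit is approached from below as $\lambda$ decreases, i.e.\ monotonically increasing along $\lambda\downarrow 0$.

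The main obstacle is essentially bookkeeping: verifying the boundary cases $\xi=0$ or $\eta=0$ and checking the singular behaviour at $\lambda=0$ carefully. The conceptual core, namely that everything flows from the convexity of $h$, is short and uniform across the four claims.
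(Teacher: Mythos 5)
Your proposal is correct and follows essentially the same route as the paper: your Jensen-for-$\exp$ step is exactly the Young/weighted-AM--GM inequality used for (i)--(ii), your tangent-line inequality for the numerator $h(\lambda)-\lambda h'(\lambda)-h(0)$ is, after writing $h(\lambda)=\xi s^\lambda$ with $s=\eta/\xi$, identical to the paper's bound $s^\lambda\log s^\lambda\ge s^\lambda-1$ from convexity of $\alpha\mapsto\alpha\log\alpha$, and your first-order expansion of $h$ replaces the paper's l'Hospital computation for (iv). Packaging all four parts through the convexity of $\lambda\mapsto\xi^{1-\lambda}\eta^\lambda$ is a tidy unification, but the underlying computations coincide with the paper's.
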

\begin{proof}
	\begin{enumerate}[label=$(\roman*)$]
		\item For any $\lambda\in(0,1)$ and $\xi,\eta\ge 0$, the Young's inequality yields 
		\[
		\xi^{1-\lambda}\eta^\lambda \le (1-\lambda)\xi + \lambda\eta,
		\]
		and the non-negativity of $r_\lambda$ follows by simply rearranging the terms.
		\item The reverse implication follows trivially by inserting $\xi=\eta$.
		Now assume that $r_\lambda(\xi,\eta)=0$. If $\xi=0$, it follows that $\eta=0$ and vice versa. Therefore without the loss of generality we assume that $\xi>0$, which implies that $\eta>0$. By rewriting
		\[
		r_\lambda(\xi,\eta)=\xi\bigl((1-\lambda)-s^\lambda + \lambda s\bigr),\qquad s=\eta/\xi,
		\]
		and noting that the function $s\mapsto s^\lambda $ is strictly concave on $(0,\infty)$, we deduce that the expression within the bracket vanishes if and only if $s=1$, i.e.\ $\eta=\xi$.
		\item If $\xi=0=\eta$, there is nothing to show. Suppose $\xi=0$, then $\lambda^{-1}r_\lambda(\xi,\eta)= \eta$, i.e.\ $\lambda^{-1}r_\lambda(\xi,\eta)$ is constant in $\lambda$ and therefore monotonically decreasing. If $\eta=0$ and $\xi>0$, then $\lambda^{-1}r_\lambda(\xi,\eta)= (1/\lambda-1)\xi$, which is monotonically decreasing in $\lambda$ since $\lambda\mapsto 1/\lambda$ is monotonically decreasing.
		For $\xi,\eta>0$, we begin by observing that $\lambda\mapsto \lambda^{-1}r_\lambda(\xi,\eta)\in \curlC^1((0,1))$, with 
		\[
		\frac{d}{d\lambda} \frac{r_\lambda(\xi,\eta)}{\lambda} = \frac{\xi}{\lambda^2}\bigl( s^\lambda - 1 - s^\lambda\log s^\lambda \bigr),\qquad s=\eta/\xi.
		\]
		Since $\alpha\mapsto \alpha \log \alpha$ is convex on $(0,\infty)$, it follows that  $s^\lambda\log s^\lambda \ge s^\lambda - 1$, and therefore  $\lambda^{-1}r_\lambda(\xi,\eta)$ is monotonically decreasing in $\lambda$.
		\item Let $\xi,\eta>0$ and set $s=\eta/\xi$. Using l'Hospital's formula it follows that
		\begin{align}\label{eq:r_lam-lim}
		\lim_{\lambda \rightarrow 0}\frac{r_\lambda(\xi,\eta)}{\lambda} = \eta-\xi - \xi\lim_{\lambda \rightarrow 0}\left(\frac{s^\lambda-1}{\lambda}\right)  = \eta-\xi - \xi\lim_{\lambda \rightarrow 0}\left(\frac{e^{\lambda\log(s)}-1}{\lambda}\right) = \eta-\xi - \xi \log(s),
		\end{align}
		The monotonically increasing convergence holds due to $(iii)$. 
	\end{enumerate}
\end{proof}

\begin{lem}
	\label{l:props-gRFI}
	The two definitions in Definition~\ref{def:genRFI-MAP} are consistent; that is, whenever both definitions apply, they give the same value. Additionally, 
	\begin{enumerate}[label=(\roman{*})]
		\item $\RF^\lambda_L(\mu \vert \rho) \geq 0$ for all $\mu,\rho\in\curlP(\curlX)$;
		\item $\RF^\lambda_L$ is lower-semicontinuous on $\curlP(\curlX)\times\curlP(\curlX)$.
	\end{enumerate}
\end{lem}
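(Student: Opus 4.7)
The plan is to exploit the representation $\psi_\lambda(x,y)=\lambda^{-1}r_\lambda(v(x),v(y))\rho(x)$ from \eqref{eq:psi_r} (valid whenever $\rho(x),\rho(y)>0$) together with Lemma~\ref{lem:r_lambda}. Observe first that $\psi_\lambda(x,x)=0$ in every branch of its definition: if $\rho(x)>0$ this follows from $r_\lambda(v(x),v(x))=0$, and otherwise $\psi_\lambda(x,x)$ is declared $0$. Consequently only the off-diagonal terms, where $L(x,y)\ge 0$, contribute to \eqref{eq:genRFIc}, which will be crucial for both non-negativity and lower semicontinuity.

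For the consistency claim, assume $\rho,\mu\in\curlP_+(\curlX)$ with $\sup_x\max\{v(x),1/v(x)\}<\infty$, so that $f:=\lambda\log(\mu/\rho)\in\Cb(\curlX)$. Expanding \eqref{eq:hamiltonian} and using $\mu(x)\bigl(\rho(x)/\mu(x)\bigr)^{\lambda}=\mu(x)^{1-\lambda}\rho(x)^{\lambda}$, one obtains
\[
\curlH(\mu,f)=\sum_{x,y\in\curlX}L(x,y)\Bigl[\mu(x)^{1-\lambda}\rho(x)^{\lambda}\bigl(\mu(y)/\rho(y)\bigr)^{\lambda}-\mu(x)\Bigr].
\]
Thanks to \eqref{eq:conditionsGenerator2} (so that $\sum_y|L(x,y)|\le 2\sup_x|L(x,x)|<\infty$) together with the boundedness of $v$, both this series and $\sum_{x,y}L(x,y)\mu(y)\rho(x)/\rho(y)$ are absolutely convergent, so that they may be subtracted term by term to recover \eqref{eq:genRFIb}. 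On the other hand, since $\rho(x),\rho(y)>0$ everywhere, the first branch of \eqref{eq:genRFIc} applies summand by summand and yields precisely the same expression, establishing consistency.

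Part (i) is then immediate: by Lemma~\ref{lem:r_lambda}(i) and \eqref{eq:psi_r}, $\psi_\lambda\ge 0$ in all three cases of its definition; since $L(x,y)\ge 0$ off the diagonal with $\psi_\lambda(x,x)=0$, the sum \eqref{eq:genRFIc} is manifestly non-negative, and consistency extends this to \eqref{eq:genRFIa}. For (ii), the key reduction is to show that for each fixed $(x,y)$ the map $(\mu,\rho)\mapsto\psi_\lambda(x,y)$ is lower semicontinuous on $\curlP(\curlX)\times\curlP(\curlX)$, where by Remark~\ref{rem:weak-strong-equivalence} convergence is $\ell^1$-convergence and hence in particular pointwise on $\curlX$. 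Granting this, one writes
\[
\RF_L^\lambda(\mu\vert\rho)=\sup_{F}\sum_{(x,y)\in F}L(x,y)\,\psi_\lambda(x,y),
\]
the supremum running over finite subsets $F\subset\curlX\times\curlX$; each finite partial sum is lsc because each $L(x,y)\psi_\lambda(x,y)$ is non-negative and lsc, and the supremum of lsc functions is lsc. (Equivalently, one invokes Fatou's lemma against the counting measure on $\curlX\times\curlX$.)

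The pointwise lsc of $\psi_\lambda(x,y)$ is the only real obstacle and is carried out by a case analysis on $(\rho(x),\rho(y),\mu(y))$. On the open set $\{\rho(x)>0,\rho(y)>0\}$, $\psi_\lambda(x,y)$ is continuous, so lsc is trivial. The critical case is $\rho(y)=0$, $\rho(x)>0$, $\mu(y)>0$, where $\psi_\lambda(x,y)=+\infty$: along any sequence $(\mu_n,\rho_n)\to(\mu,\rho)$ one either has $\rho_n(y)=0$ for infinitely many $n$, in which case $\mu_n(y)\to\mu(y)>0$ eventually forces $\psi_\lambda^n(x,y)=+\infty$ by definition, or $\rho_n(y)>0$, in which case $v_n(y)=\mu_n(y)/\rho_n(y)\to+\infty$ while $v_n(x)$ stays bounded, and the elementary bound $r_\lambda(\xi,\eta)\ge \lambda\eta-\xi^{1-\lambda}\eta^{\lambda}=\eta^{\lambda}(\lambda\eta^{1-\lambda}-\xi^{1-\lambda})\to\infty$ (as $\eta\to\infty$ with $\xi$ bounded, since $1-\lambda>0$) yields $\psi_\lambda^n(x,y)\to\infty$. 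In all remaining cases $\psi_\lambda(x,y)=0$, and $\psi_\lambda^n\ge 0$ gives lsc for free. This concludes the plan.
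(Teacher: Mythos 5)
Your proposal is correct and follows essentially the same route as the paper: consistency by expanding the Hamiltonian with $f=\lambda\log(\mu/\rho)$, non-negativity via $\psi_\lambda(x,x)=0$, $L(x,y)\ge 0$ off the diagonal and Lemma~\ref{lem:r_lambda}(i) through \eqref{eq:psi_r}, and lower semicontinuity by a pointwise case analysis on $(\rho(x),\rho(y),\mu(y))$ combined with Fatou's lemma (your supremum over finite partial sums is equivalent). The only differences are cosmetic refinements — you spell out the absolute convergence needed to subtract the two series in the consistency step, and you explicitly treat the subsequence with $\rho_n(y)=0$ in the critical case, which the paper's divergence estimate implicitly assumes away.
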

\begin{proof}		
	Using the Hamiltonian~\eqref{eq:hamiltonian}, it is easy to check that the definitions~\eqref{eq:genRFIa} and~\eqref{eq:genRFIb} agree for any $\rho, \mu \in \curlP_+(\curlX)$ with $\sup_{x \in \curlX} \max\{\mu(x) / \rho(x), \rho(x) / \mu(x)\} < \infty$, which proves the consistency of Definition~\ref{def:genRFI-MAP}. 
	\begin{enumerate}[label=$(\roman{*})$]
		\item Since $\psi_\lambda(x,x) = 0$ for all $x$, the diagonal in the double sum in~\eqref{eq:genRFIc} vanishes. So we consider $x\not=y$, for which $L(x,y)\geq0$.
		If $\mu(x) =0$ or $\rho(y)=0$, then $\psi_\lambda(x,y)\geq0$; if $\rho(y)>0$, 
		$\psi_\lambda(x,y)=0$ if $\rho(x)=0$ and $\psi_\lambda(x,y)\ge0$ (due to \eqref{eq:psi_r} and the non-negativity of $r_\lambda$ in Lemma~\ref{lem:r_lambda}) if $\rho(x)>0$. Therefore $L(x,y)\psi_\lambda(x,y)\geq0$ for all $x,y$, and $\RF^\lambda_L(\mu\vert\rho)\geq0$. 
		\item Let $((\mu^n,\rho^n))_{n\in\N}\subset \curlP(\curlX)\times \curlP(\curlX)$ be a sequence that converges to $(\mu,\rho)$. In particular, $\mu^n(x)\to\mu(x)$ and $\rho^n(x)\to\rho(x)$ for every $x\in\curlX$ (cf.~Remark~\ref{rem:weak-strong-equivalence}).
		
		Now let $x\in\curlX$ be arbitrary and consider $y\in\curlX$ with $L(x,y)>0$. For simplicity, we denote
		\[
		\psi^n(x,y) = \dfrac{ \mu^n(y)}{\rho^n(y)} \rho^n(x) - \mu^n(x)
		-\dfrac1\lambda \left( \mu^n(x)^{1-\lambda}\rho^n(x)^\lambda 
		\left(\dfrac{\mu^n(y)}{\rho^n(y)}\right)^\lambda -\mu^n(x) \right).
		\]
		{\bf Case 1:} ($\rho(y)=\alpha>0$) Due to the pointwise convergence, there exists an $\alpha'>0$ such that $\rho^n(y)>\alpha'$ for sufficiently large $n$. In this case, we easily conclude that $ \psi^n(x,y)\to \psi(x,y)$ as $n\to\infty$.
		
		{\bf Case 2:} ($\rho(y)=0$, $\rho(x),\mu(y)\ge \beta>0$) As before,  there exists a $\beta'>0$ such that $\rho^n(x),\mu^n(y)>\beta'$ for sufficiently large $n$. Further, we have $\mu(x),\rho(x)\in[0,M]$ for all $x\in\curlX$, with some $M \geq 1$. Therefore,
		\[
		\psi^n(x,y) \ge (\beta')^2\frac{1}{\rho^n(y)} - M - \dfrac{1}{\lambda}M^{1+\lambda} \left(\dfrac{1}{\rho^n(y)}\right)^\lambda = \frac{1}{\rho^n(y)} \underbrace{\biggl[ (\beta')^2 - \dfrac1\lambda M^{1+\lambda} (\rho^n(y))^{1-\lambda}\biggr]}_{(*)} - M.
		\]
		Since $(\rho^n(y))^{1-\lambda}\to 0$ as $n\to\infty$, it follows that $(\beta')^2\ge (*)\ge \delta$ for some $\delta>0$ and sufficiently large $n$. Consequently, $\psi^n(x,y)\to\infty$ as $n\to\infty$.
		
		The other cases are trivial since $\psi^n(x,y)\ge 0$. An application of Fatou's lemma yields
		\[
		\liminf_{n \to \infty}\RF_L^\lambda(\mu^n \vert \rho^n) \ge \sum_{x,y \in \curlX} L(x,y) \liminf_{n \to \infty} \psi^n(x,y) \ge \sum_{x,y \in \curlX} L(x,y) \psi(x,y) = \RF_L^\lambda(\mu \vert \rho),
		\]
		thereby concluding the proof.
	\end{enumerate}
\end{proof}
We are now in a position to prove the first main result of this paper.

\begin{proof}[Proof of Theorem~\ref{thm:FIR}]
	We proceed by approximation. Let $\rho\in A\curlC([0,T];\curlP(\curlX))$ be a solution of~\eqref{eq:forwardKolmogorovEquation}. Since we assume the generator $L$ to be bounded~\eqref{eq:conditionsGenerator3} and irreducible~\eqref{eq:conditionsGenerator2}, it follows that  $\rho_t(x)>0$ for any $t>0$ and $x\in\curlX$ (see Lemma~\ref{lem:Pos-fKol} for a proof). 
	Without loss of generality we can assume that $\mu\in A\curlC([0,T];\curlP(\curlX))$, since by Theorem~\ref{thm:LDP} this is implied by $\I_L(\mu)<\infty$. Using Lemma~\ref{l:Bochner:1} we find $\rho,\mu\in W^{1,1}(0,T;\ell^1(\curlX))$ and therefore $\partial_t\rho,\partial_t\mu\in L^1(0,T;\ell^1(\curlX))$.
	
	For $\e>0$ and $\delta>0$, define the function $\rho^\e_t(x) := \rho_t(x)+\e\mu_t(x)$. Since $\mu\ll \rho^\e$, we can define the density 
	\[
	v_t^\e(x) := \frac{\mu_t(x)}{\rho_t^\e(x)} \in \left[0,\frac1\e\right].
	\]
	Note that $v^\e_t(x)\to \mu_t(x)/\rho_t(x)$ as $\e\to0$ for all $x\in\curlX$ and $t > 0$.

	Since $\log(v^\varepsilon + \delta)\in L^\infty(0,T;\Cb(\curlX))$ for any $\delta\in(0,1)$, using the representation \eqref{eq:BoundRF} we find 
	\[
	\frac{1}{\lambda }\I_L(\mu) \ge \int_0^T \langle \log(v_t^\varepsilon+ \delta),\partial_t\mu_t\rangle - \frac{1}{\lambda}\curlH(\mu_t, \lambda\log(v_t^\varepsilon+ \delta)) \,dt.
	\]
	We split the proof into two steps, where the first step deals with passing $\delta\to 0$ and the second step with passing  $\varepsilon\to 0$.
	
	{\bf Step 1:} Taking the liminf ($\delta\to 0$) in the previous inequality yields
	\begin{align*}
	\frac{1}{\lambda }\I_L(\mu) &\ge 
	\liminf_{\delta \rightarrow 0} \biggl\{ \int_0^T \langle \log(v_t^\varepsilon+ \delta),\partial_t\mu_t\rangle\,dt \biggr\} - \frac{1}{\lambda}\limsup_{\delta \rightarrow 0} \biggl\{ \int_0^T \curlH(\mu_t, \lambda\log(v_t^\varepsilon+ \delta)) \,dt \biggr\} = (I) - \frac{1}{\lambda}(II).
	\end{align*}
	We now study both these terms.
	
	{\it Part $(I)$:} Define the function $g_{\e,\delta}:[0,\infty)\times (0,\infty)\to\R$ by
	\[
	g_{\e,\delta}(\eta,\xi) := \eta \log\left( \frac\eta{\e\eta+\xi} +\delta\right).
	\]
	For fixed $\e,\delta$, the function $g_{\e,\delta}$ is globally Lipschitz on $A:= [0,\infty)\times (0,\infty)$, and differentiable at each $(\eta,\xi)\in A$.
	Since $\rho_t(x)>0$ for all $t>0$ and $x\in \curlX$, by Lemma~\ref{l:chain-rule-ell1} the function $t\mapsto g_{\e,\delta}(\mu_t,\rho_t) = \mu_t(x) \log(v_t^\varepsilon(x) + \delta)$ is an element of $A\curlC([0,T]; \ell^1(\curlX))$, and the following chain rule holds for almost every $t\in[0,T]$:
	\[
	\frac{d}{dt} \sum_{x\in\curlX} \mu_t(x)\log(v_t^\varepsilon(x) + \delta) = \sum_{x \in \curlX}\left(\frac{v_t^\varepsilon(x)}{v_t^\varepsilon(x) + \delta} + \log(v_t^\varepsilon(x) + \delta)\right)\partial_t\mu_t(x) 
	- \sum_{x\in \curlX} v_t^\varepsilon(x)\frac{v_t^\varepsilon(x)}{v_t^\varepsilon(x) + \delta}\partial_t\rho^\e_t(x).
	\]
	From this chain rule we easily deduce 
	\begin{equation}\label{eq:FIR-Aux1}
	\begin{aligned}
	\int_0^T \langle\log(v_t^\varepsilon + \delta),\partial_t\mu_t\rangle\,dt &= \sum_{x\in\curlX} \mu_T(x)\log(v_T^\varepsilon(x) + \delta) - \sum_{x\in\curlX} \mu_0(x)\log(v_0^\varepsilon(x) + \delta) \\
	&  \ \ - \int_0^T \sum_{x \in \curlX}\frac{v_t^\varepsilon(x)}{v_t^\varepsilon(x) + \delta}\partial_t\mu_t(x)\,dt + \int_0^T\sum_{x \in \curlX} v_t^\varepsilon(x)\frac{v_t^\varepsilon(x)}{v_t^\varepsilon(x) + \delta}\partial_t\rho^\e_t(x)\,dt.
	\end{aligned}
	\end{equation}
	We now pass to the limit $\delta\to0$ in each of the terms on the right-hand side.
	
	Since $\partial_t\mu,\partial_t\rho^\e\in L^1(0,T;\ell^1(\curlX))$ and $v^\varepsilon\in L^\infty(0,T;\Cb(\curlX))$ we may pass to the limit $\delta\to 0$ using the dominated convergence theorem to obtain
	\begin{gather*}
	\int_0^T\sum_{x \in \curlX} v_t^\varepsilon(x)\frac{v_t^\varepsilon(x)}{v_t^\varepsilon(x) + \delta}\partial_t\rho^\e_t(x)\,dt \quad\xrightarrow{\delta\to 0}\quad  \int_0^T\sum_{x \in \curlX} v_t^\varepsilon(x)\partial_t\rho^\e_t(x)\,dt =  \int_0^T\sum_{x \in \curlX} v_t^\varepsilon(x)\left[ (L^T\rho_t)(x) + \e\partial_t\mu_t\right] \,dt.
	\end{gather*}
	A similar argument gives 
	\begin{gather*}
	\int_0^T \sum_{x \in \curlX}\frac{v_t^\varepsilon(x)}{v_t^\varepsilon(x) + \delta}\partial_t\mu_t(x)\,dt \quad\xrightarrow{\delta\to 0}\quad 
	\int_0^T \sum_{x \in \curlX}\Indicator\{\mu_t(x)>0\}\, \partial_t\mu_t(x)\,dt.
	\end{gather*}
	This limit is equal to zero, as we now show using another application of the dominated convergence theorem shows. Let $H_m:\R\to[0,1]$ be a smooth approximation of the Heaviside function $H$ with $H_m(s) = 0$ for $s\leq 0$ and $H_m(s)\uparrow 1$ for $s>0$ as $m\to\infty$; set $f_m(s) = \int_0^s H_m(\sigma)\, d\sigma$. Since $f_m$ is Lipschitz, $t\mapsto f_m(\mu_t(\cdot))$ is again absolutely continuous by Lemma~\ref{l:chain-rule-ell1}, and we have the chain rule
	\[
	\sum_{x \in \curlX}\Bigl[f_m(\mu_T(x)) - f_m(\mu_0(x))\Bigr]
	= \int_0^T \sum_{x\in \curlX} H_m(\mu_t(x))\partial_t \mu_t(x)\, dt.
	\]
	Using the dominated convergence theorem on both sides, we pass to the limit $m\to \infty$ to find
	\[
	0 = \sum_{x \in \curlX}\Bigl[\mu_T(x) - \mu_0(x)\Bigr]
	= \int_0^T \sum_{x\in \curlX} \Indicator\{\mu_t(x)>0\}\, \partial_t \mu_t(x)\, dt.
	\]

	Turning to  the first term in~\eqref{eq:FIR-Aux1}, using $\mu_t(x)\log(v_t^\varepsilon(x) + \delta) \ge \mu_t(x)\log(v_t^\varepsilon(x))$ for any $(t,x)\in[0,T]\times \curlX$, we find
	\[
	\sum_{x\in\curlX} \mu_t(x)\log(v_t^\varepsilon(x) + \delta) \ge \sum_{x\in\curlX} \mu_t(x)\log(v_t^\varepsilon(x)) = \RelEnt(\mu_t \vert \rho_t^\varepsilon).
	\]
	At time zero, the finiteness of $\RelEnt(\mu_0\vert\rho_0)$ implies that whenever $\mu_0(x)>0$ we have $\rho_0(x)>0$, and therefore the density $v_0(x) := \mu_0(x)/\rho_0(x)$ is well-defined $\mu_0$-almost-everywhere.
	Using the concaveness and monotonicity of the natural logarithm, for the second term in~\eqref{eq:FIR-Aux1} we find
	\begin{align*}
	\sum_{x\in\curlX} \mu_0(x)\log(v_0^\varepsilon(x) + \delta) &= \sum_{x \in \curlX} v_0^\varepsilon(x)\log(v_0^\varepsilon(x) + \delta)\rho_0^\varepsilon(x)
	\le \sum_{x \in \curlX} \mu_0(x)\log(v_0^\varepsilon(x)) + \delta(1+\varepsilon) \\
	&\le \sum_{x \in \curlX} \mu_0(x)\log(v_0(x)) + \delta(1+\varepsilon) = \RelEnt(\mu_0 \vert \rho_0) + \delta(1+\varepsilon),
	\end{align*}
	where we have used $\rho_0^\varepsilon\ge \rho_0$ to arrive at the second inequality. 
	Altogether, we obtain
	\begin{align*}
	\liminf_{\delta \rightarrow 0} \int_0^T \langle\log(v_t^\varepsilon + \delta),\partial_t\mu_t\rangle\,dt &\ge \RelEnt(\mu_T \vert \rho_T^\varepsilon) - \RelEnt(\mu_0 \vert \rho_0) +  \int_0^T\sum_{x \in \curlX} v_t^\varepsilon(x)\left[ (L^T\rho_t)(x) + \e \partial_t\mu_t(x)\right]\,dt,
	\end{align*}
	which concludes part $(I)$.
	
	{\it Part $(II)$:} Using the definition~\eqref{eq:hamiltonian} of the Hamiltonian, and $\sum_{y\in\curlX}L(x,y)=0$ we find
	\begin{align*}
	\curlH(\mu_t, \lambda\log(v_t^\varepsilon + \delta)) &= \sum_{x,y \in \curlX} \mu_t(x) L(x,y) \left[e^{\lambda\log(v_t^\varepsilon + \delta)(y) - \lambda\log(v_t^\varepsilon + \delta)(x)} - 1\right] \\
	&= \sum_{x,y \in \curlX} \mu_t(x) L(x,y) \biggl(\frac{v_t^\varepsilon(y) + \delta}{v_t^\varepsilon(x) + \delta}\biggr)^\lambda   \\
	&=\sum_{x,y \in \curlX} \rho_t^\varepsilon(x)(v_t^\varepsilon(x))^{1-\lambda} L(x,y) \biggl(\frac{v_t^\varepsilon(x)}{v_t^\varepsilon(x) + \delta}\biggr)^\lambda(v_t^\varepsilon(y) + \delta)^\lambda.
	\end{align*}
	We have the upper bound
	\begin{align*}
	\left| \rho_t^\varepsilon(x)(v_t^\varepsilon(x))^{1-\lambda} L(x,y) \biggl(\frac{v_t^\varepsilon(x)}{v_t^\varepsilon(x) + \delta}\biggr)^\lambda(v_t^\varepsilon(y) + \delta)^\lambda\right| \leq \varepsilon^{\lambda-1} (\varepsilon^{-1}+1)^\lambda\, \rho^\e(x) |L(x,y)|,
	\end{align*}
	where we have used $|v^\e_t|\leq \varepsilon^{-1}$ and $\delta\in (0,1)$. Note that the right-hand side is an element of $\ell^1(\curlX\times\curlX)$ since $\rho^\e\in \ell^1(\curlX)$ and $L$ satisfies~\eqref{eq:conditionsGenerator2}.
	Using the dominated convergence theorem we find
	\begin{align*}
	\limsup_{\delta \rightarrow 0} &\int_0^T \curlH(\mu_t, \lambda\log(v_t^\varepsilon + \delta))\,dt  = 
	\int_0^T \sum_{x,y \in \curlX} \rho_t^\varepsilon(x)(v_t^\varepsilon(x))^{1-\lambda} L(x,y)(v_t^\varepsilon(y))^\lambda.
	\end{align*}
	This concludes part $(II)$.
	
	Putting both the parts together, we obtain
	\begin{align}
	\frac{1}{\lambda }\I_L(\mu) &\ge (I) - \frac{1}{\lambda}(II) \notag \\
	&\ge \RelEnt(\mu_T \vert \rho_T^\e) - \RelEnt(\mu_0 \vert \rho_0) \notag\\
	&\hspace*{5em}+ \int_0^T \sum_{x,y \in \curlX} L(x,y)
	\rho_t^\e(x)\left[  v_t^\e(y) - \frac1\lambda (v_t^\e(x))^{1-\lambda}(v_t^\e(y))^\lambda\right] dt + \e\int_0^T \sum\limits_{x\in\curlX}v_t^\varepsilon(x)\partial_t\mu_t(x) dt \notag \\
	&= \RelEnt(\mu_T \vert \rho_T^\e) - \RelEnt(\mu_0 \vert \rho_0) 
	+ \int_0^T \RF_L^\lambda(\mu_t \vert \rho_t^\e)\, dt + \e\int_0^T \sum\limits_{x\in\curlX}v_t^\varepsilon(x)\partial_t\mu_t(x) dt,
	\label{ineq:FIR-intermediate}
	\end{align}
	where in the final identity we used the property $\sum_{y\in \curlX} L(x,y)=0$ and \eqref{eq:psi_r}. This inequality clearly resembles the FIR inequality.

	{\bf Step 2:} We now take the limit $\e\to0$. For any $t\in(0,T]$ we have
	\begin{align*}
	\RelEnt(\mu_t \vert \rho_t^\e) &= \sum_{x\in\curlX} \mu_t(x)\log(v_t^\e(x)) = \sum_{x\in\curlX} v_t^\e(x)\log(v_t^\e(x))\rho_t^\e(x) \\		
	&= \sum_{x\in\curlX} \big[v_t^\e(x)(\log(v_t^\e(x))-1) + 1\big]\rho_t^\e(x) +\sum_{x\in\curlX} [\mu_t(x) - \rho_t^\e(x)]\\
	&= \sum_{x\in\curlX} \big[v_t^\e(x)(\log(v_t^\e(x))-1) + 1\big]\rho_t^\e(x) - \e.
	\end{align*}
	The final inequality follows since $\sum_{x\in \curlX} \rho^\e_t(x)  = 1+\e$. The summand in the final  right-hand side is non-negative, and for each $x$ and $t$ such that $\rho_t(x)>0$ we have $v_t^\e(x)\to v_t(x) = \mu_t(x)/\rho_t(x)$ for $\e\to 0$. We therefore apply Fatou's lemma to obtain
	\begin{align*}
	\liminf_{\e \rightarrow 0} \RelEnt(\mu_t \vert \rho_t^\e) 
	&\geq \liminf_{\e \rightarrow 0} \sum_{x\in\curlX} \big[v_t^\e(x)\log(v_t^\e(x))- v_t^\e(x) + 1\big]\rho_t^\e(x) \\
	&\geq \liminf_{\e \rightarrow 0} \sum_{x\in\curlX} \big[v_t^\e(x)\log(v_t^\e(x))- v_t^\e(x) + 1\big]\rho_t(x) \\
	&= \sum_{x\in\curlX} \big[v_t(x)(\log(v_t(x))-1) + 1\big]\rho_t(x) = \RelEnt(\mu_t \vert \rho_t).
	\end{align*}
	As for the other expression, we use the non-negativity and lower-semicontinuity of $\RF_L^\lambda$ (recall Lemma~\ref{l:props-gRFI} and Remark \ref{rem:finiteMeasures}) to obtain
	\begin{align}\label{eq:RF-alt}
	\MoveEqLeft\liminf_{\e \rightarrow 0} 
	\int_0^T \RF_L^\lambda(\mu_t \vert \rho_t^\e)\,dt 
	\ge \int_0^T \liminf_{\e \rightarrow 0} \RF_L^\lambda(\mu_t \vert \rho_t^\e)\,dt 
	= \int_0^T \RF_L^\lambda(\mu_t \vert \rho_t)\,dt.
	\end{align}
	Since $\e v^\e_t(x)$ is uniformly bounded for every $t\in (0,T]$ and $x\in\curlX$, we can pass $\e\rightarrow 0$ in the final term of~\eqref{ineq:FIR-intermediate} using the dominated convergence theorem, which gives
	\begin{align*}
	\lim\limits_{\e\rightarrow 0}\e\int_0^T \sum\limits_{x\in\curlX}v_t^\varepsilon(x)\partial_t\mu_t(x) dt= 0.
	\end{align*}

	Putting the results of the two steps together, we obtain
	\begin{align*}
	\frac{1}{\lambda }\I_L(\mu) 
	\ge \RelEnt(\mu_T \vert \rho_T) - \RelEnt(\mu_0 \vert \rho_0) + \int_0^T \RF_L^\lambda(\mu_t \vert \rho_t)\,dt,
	\end{align*}
	which concludes the proof of the FIR inequality.
\end{proof}

\subsection{Properties of the generalised relative Fisher Information}\label{subsec:propetiesGenRFI}

Given the  set $\curlX$ and the operator $L$, we  define a graph with vertices $\curlX$ and un-oriented edges $\curlE\subset \curlX\times \curlX$ as follows:
\[
(x,y)\in \curlE \quad\Longleftrightarrow \quad L(x,y)>0\quad\text{or}\quad L(y,x)>0.
\]
The interpretation of this graph is that two vertices are connected if they are a single jump of the Markov process apart, in either direction. 
In this graph, the support $\supp(\rho) := \{x\in \curlX : \rho(x)>0\}$ is a subset of the vertices, and defines a subgraph by deleting all edges that do not connect two vertices in $\supp(\rho)$.
Furthermore, we can decompose $\supp(\rho)$ into connected components $\Omega_i$, i.e.\ $\supp(\rho) = \cup_{i \in I} \Omega_i$ and for every pair $x,y \in \Omega_i$ there exists a finite sequence $(x_n)_{n = 1,\dots, N}$ in $\Omega_i$ with $x_1 = x$, $x_N = y$ and the vertices $x_n$ and $x_{n+1}$ are connected for all $n = 1, \dots, N-1$.

\begin{lem}\label{l:R=0} 
	Let $\mu,\rho\in\curlP(\curlX)$, and let $\supp(\rho)$ be decomposed into connected components $\Omega_i$. If $\mu=\rho$ then $\RF^\lambda_L(\mu \vert \rho)=0$. Further, if $\RF^\lambda_L(\mu \vert \rho)=0$, then there exist numbers $a_i \geq 0$, $i\in I$, such that $\mu(x) = a_i\rho(x)$ for all $x\in \Omega_i$. In particular, if $\rho(x)>0$ for all $x\in\curlX$ and $L$ is irreducible, then $\mu = \rho$.
\end{lem}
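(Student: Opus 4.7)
The plan is to exploit the fact that $\RF^\lambda_L(\mu\vert\rho)$ is a sum of nonnegative terms, each of the form $L(x,y)\psi_\lambda(x,y)$, and to read off pointwise constraints from the vanishing of the whole sum.

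For the forward direction, I would observe that when $\mu=\rho$ the density $v=\mu/\rho$ equals $1$ on $\supp(\rho)$, so $r_\lambda(v(x),v(y)) = r_\lambda(1,1) = 0$ by direct computation (equivalently, by Lemma~\ref{lem:r_lambda}(ii)). Hence $\psi_\lambda(x,y)=0$ whenever $\rho(x),\rho(y)>0$. For the remaining cases in the definition of $\psi_\lambda$ (namely $\rho(y)=0$ with $\rho(x)>0$, or $\rho(x)=0$), the equality $\mu=\rho$ immediately forces $\mu(y)=0$ (respectively $\mu(x)=0$), so $\psi_\lambda(x,y)=0$ in these cases as well. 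Summation yields $\RF^\lambda_L(\rho\vert\rho)=0$.

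For the backward direction, suppose $\RF^\lambda_L(\mu\vert\rho)=0$. Since each summand $L(x,y)\psi_\lambda(x,y)$ is nonnegative (by Lemma~\ref{l:props-gRFI}(i) and the proof of its part (i)), every such summand vanishes. Fix a connected component $\Omega_i$ of $\supp(\rho)$ and two vertices $x,y\in \Omega_i$ that are joined by an edge in the subgraph, so at least one of $L(x,y)$ or $L(y,x)$ is strictly positive. Assuming without loss of generality that $L(x,y)>0$, I get $\psi_\lambda(x,y)=0$; but $\rho(x)>0$ and $\rho(y)>0$, so $\psi_\lambda(x,y) = \lambda^{-1} r_\lambda(v(x),v(y))\rho(x)$ and Lemma~\ref{lem:r_lambda}(ii) forces $v(x)=v(y)$. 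Iterating this equality along a path within $\Omega_i$ shows that $v$ is constant on $\Omega_i$, so there exists $a_i\geq 0$ with $\mu(x)=a_i\rho(x)$ for every $x\in \Omega_i$. (Note that the remaining cases of the definition of $\psi_\lambda$ pose no difficulty: if $\rho(y)=0$ but $\rho(x)>0$, finiteness of $\RF^\lambda_L$ rules out $\mu(y)>0$; if $\rho(x)=0$ then $\psi_\lambda(x,y)=0$ automatically.)

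Finally, if $\rho(x)>0$ for all $x\in\curlX$, then $\supp(\rho)=\curlX$, and irreducibility of $L$ implies that the undirected graph $(\curlX,\curlE)$ is connected, so there is a single component. Hence $\mu=a\rho$ on $\curlX$ for some $a\geq 0$, and both $\mu$ and $\rho$ being probability measures forces $a=1$, i.e.\ $\mu=\rho$. The only step that requires any care is the bookkeeping between the two separate cases in the definition of $\psi_\lambda$, but once it is observed that a finite Fisher Information already precludes the $+\infty$ branch, the rest reduces to applying Lemma~\ref{lem:r_lambda}(ii) edge by edge.
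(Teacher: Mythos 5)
Your proof is correct and follows essentially the same route as the paper's: decompose $\supp(\rho)$ into connected components of the jump graph and apply Lemma~\ref{lem:r_lambda}(ii) edge by edge along paths, then use mass normalisation for the irreducible, fully supported case. The only cosmetic difference is that the paper first shows separately that $\mu$ is either identically zero or strictly positive on each component, whereas you absorb this into the uniform application of Lemma~\ref{lem:r_lambda}(ii), which indeed covers the boundary values $\xi=0$ or $\eta=0$.
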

\begin{proof}
	The fact that $\mu=\rho$ implies $\RF^\lambda_L(\mu\vert\rho) = 0$ follows from the definition of  $\RF^\lambda_L$.  Assume now that $\RF_L^\lambda(\mu \vert \rho) = 0$ for $\mu, \rho \in \curlP(\curlX)$.
	Let $\Omega_i$ be a connected component of the support of $\rho$, where we exclude the trivial cases that $\mu$ vanishes identically on $\Omega_i$ or that $\Omega_i$ only contains one vertex.
	We now show that if $\mu$ does not vanish identically it is strictly positive on $\Omega_i$.
	Assume that $\mu \vert_{\Omega_i} \not> 0$; since $\Omega_i$ is a connected subgraph there exists $x,y \in \Omega_i$ such that $L(x,y) > 0$ and either $\mu(x) > 0$ and $\mu(y) = 0$ or $\mu(x) = 0$ and $\mu(y) > 0$.
	In the first case, we estimate using \eqref{eq:genRFIc} (recall that $\rho(x) > 0$ and $\rho(y) > 0$) that
	\begin{align*}
	\RF_L^\lambda(\mu \vert \rho) \geq L(x,y) \left(-\mu(x) + \dfrac{1}{\lambda}\mu(x)\right) > 0,
	\end{align*}
	since $\lambda \in (0,1)$.
	In the second case, we obtain
	\begin{align*}
	\RF_L^\lambda(\mu \vert \rho) \geq L(x,y)\left(\dfrac{\mu(y)}{\rho(y)} \rho(x)\right) > 0.
	\end{align*}
	Therefore, in both cases we obtain a contradiction to $\RF_L^\lambda(\mu \vert \rho) = 0$ and thus, $\mu \vert_{\Omega_i} > 0$.
	
	Now let $x,y \in \Omega_i$ be arbitrary.
	Since $\Omega_i$ is a connected, there exists a finite sequence $(x_n)_{n = 1,\dots,N}$ with $x_1 = x, x_N = y$ and either $L(x_n,x_{n+1}) > 0$ or $L(x_{n+1},x_n) > 0$ for all $n = 1, \dots, N-1$.
	Furthermore, $\rho > 0$ on $\Omega_i$ and thus (cf. \eqref{eq:psi_r}),
	\begin{align*}
	0 = \RF_L^\lambda(\mu \vert \rho) \geq L(x,y) \rho(x) \dfrac{r_\lambda(v(x),v(y))}{\lambda} \geq 0, \qquad v = \mu/\rho
	\end{align*}
	for all $x,y \in \Omega_i$ and hence, especially
	\begin{align*}
	r_\lambda(v(x_n), v(x_{n+1})) = 0 \text{ or } r_\lambda(v(x_{n+1}),v(x_n)) = 0.
	\end{align*}
	Using Lemma \ref{lem:r_lambda} this is true if and only if $v(x_n) = v(x_{n+1})$ and thus,
	\begin{align*}
	\dfrac{\mu(x_{n-1})}{\rho(x_{n-1})} = \dfrac{\mu(x_{n})}{\rho(x_{n})} = \dfrac{\mu(x_{n+1})}{\rho(x_{n+1})} \qquad \text{ for all } n = 2, \dots, N-1.
	\end{align*}
	Since the pair $x,y$ was arbitrarily chosen, it follows that there exists a constant $a > 0$ such that $\mu(x) = a \rho(x)$ for all $x \in \Omega_i$.
	
	Finally, if $\rho(x) > 0$ for every $x \in \curlX$ and $L$ is irreducible, then $\curlX$ itself is a connected component and we can apply the previous result.
	Furthermore, since $\mu, \rho$ have the same mass, i.e. $\mu(\curlX) = \rho(\curlX)$, we have $a = 1$ in this case.
\end{proof}

\begin{rem}
	Note that no claim is made about $\mu(x)$ for $x\not\in \supp(\rho)$; see Example~\ref{ex:genRFI} in which $\RF^\lambda_L(\mu \vert \rho)=0$, but there exist $x\in \curlX$ with $\rho(x)=0$ and $\mu(x)>0$.
	However, if one assumes additionally that $\RelEnt(\mu\vert \rho)<\infty$, then necessarily $\mu(x)=0$ for all $x\notin \supp(\rho)$. Furthermore, in the case $\mu,\rho\in\curlP(\curlX)$ with $\RelEnt(\mu\vert \rho)<\infty$, we directly recover $\mu\equiv \rho$.
\end{rem}

\begin{example}\label{ex:genRFI}
	We now give an example of $\rho,\mu$, such that $\RF_L^\lambda(\mu \vert \rho) = 0$ and $\rho(x) = 0$ but $\mu(x) > 0$ for some $x \in \curlX$. Let $w,z \in \curlX$ and $L$ such that $L(x,z)= 0$ as well as $L(z,x) = 0$ for all $x \neq w$. 
	We consider $\mu = \delta_z$ and $\rho$ with $\supp(\rho) = \curlX \setminus \{w,z\}$. The corresponding generalised relative Fisher information~\eqref{eq:genRFIc} is 
	\begin{align*}
	\RF_L^\lambda(\mu\vert\rho) =& \sum_{x,y \in \curlX \setminus \{w,z\}} L(x,y) \psi_\lambda(x,y) \\
	&+ \sum_{x \in \curlX \setminus \{w,z\}} \left[L(x,z) \psi_\lambda(x,z) + L(z,x) \psi_\lambda(z,x) + L(x,w) \psi_\lambda(x,w) + L(w,x) \psi_\lambda(w,x) \right] \\
	&+ L(w,z) \psi_\lambda(w,z) + L(z,w)\psi_\lambda(z,w).
	\end{align*}
	By the definition of $\psi_\lambda$, the first summation vanishes since 
	$\mu(x) = \mu(y) = 0$ for $x,y \in \curlX \setminus \{w,z\}$. Regarding the second summation, note that 
	$L(x,z) = L(z,x) = 0$ by assumption and thus the first two terms vanish. Furthermore, $\psi_\lambda(x,w) = 0$ since $\mu(w) = 0$ and $\psi_\lambda(w,x) = 0$ since $\rho(w) = 0$, and thus the remaining two terms vanish. The last two terms in the equality above also vanish since $\rho(w)=\rho(z) =0$.
	This show that $\RF_L^\lambda(\mu \vert \rho) = 0$ but $\mu(z) = 1 > 0$ while $\rho(z) = 0$, i.e.\ there does not exist any $a>0$ such that $\mu(x)\neq a\rho(x)$ for $x\not\in \supp(\rho)$. 
	Additionally, this gives an example for which $\mu = a \rho$ holds on a subgraph $\Omega = \curlX \setminus \{w,z\}$ with $a = 0$.
\end{example}

\medskip

\label{motivation:RFI-convergence}
Next we turn to the asymptotic behaviour of $\RF_L^\lambda$ in the limit  $\lambda\rightarrow 0$, described by Lemma~\ref{l:RFI-convergence}. Before presenting the result, we first formally derive the limit which in this case is the relative Fisher Information~\eqref{eq:RFIBasicDef}. Using~\eqref{eq:legendreTransform}, for any  $\lambda \in (0,1)$ and $f \in \Cb(\curlX)$ we find
\begin{align*}
\dfrac{1}{\lambda} \curlH(\mu, \lambda f) = \sup_{s \in \ell^1(\curlX)} \left\{\sum_{x \in \curlX} f(x) s(x) - \dfrac{1}{\lambda}\curlL(\mu,s)\right\}\geq \sum_{x \in \curlX} f(x) (L^T\mu )(x),
\end{align*}
where we have chosen $s=L^T\mu$ and used $\curlL(\mu,L^T\mu)=0$ (cf.~\eqref{eq:propertiesRateFunctional}) to arrive at the inequality.  Substituting this into~\eqref{eq:genRFIa} we arrive at 
\begin{align*}
\RF^\lambda_L(\mu \vert \rho) \leq  \sum_{x,y \in \curlX} L(x,y) \dfrac{\mu(y)}{\rho(y)}\rho(x) - \sum_{x \in \curlX} L\log\left(\dfrac{\mu}{\rho}\right)(x) \mu(x) = \RF_L(\mu \vert \rho),
\end{align*}
where $\RF_L(\cdot\vert\cdot)$ is defined in~\eqref{eq:RFIBasicDef}. Since $\curlL$ is the Lagrangian corresponding to the operator $L$, it follows that  $\curlL(\mu,s) > 0$ if $s \neq L^T\mu$ (recall the properties below~\eqref{eq:rateFunctional}). 
Hence for small $\lambda$, the deviations from $s=L^T\mu$ are penalised in the definition of the Hamiltonian~\eqref{eq:legendreTransform} and therefore for $\lambda \rightarrow 0$ we expect that the supremum is attained at $s = L^T\mu$, i.e.\ 
\begin{align*}
\lim_{\lambda \searrow 0} \dfrac{1}{\lambda} \curlH(\mu, \lambda f) = \sum_{x \in \curlX} f(x) (L^T \mu)(x) = \sum_{x,y\in \curlX} \mu(x) L(x,y) (f(y) - f(x)).
\end{align*} 
Substituting this in~\eqref{eq:genRFIa} we expect that $\RF_L^\lambda \xrightarrow{\lambda\rightarrow 0} \RF_L$. We make this intuition rigorous in the next result. 

\begin{lem}\label{l:RFI-convergence}
	$(i)$ For all $\mu, \rho \in \curlP_+(\curlX)$, $\lim_{\lambda \searrow 0} \RF_L^\lambda(\mu \vert\rho) = \RF_L(\mu \vert \rho)$ monotonically increasing.
	
	\medskip
	
	$(ii)$ $\gammalim_{\lambda \searrow 0} \RF_L^\lambda = \RF_L$ on $\curlP_+(\curlX) \times \curlP_+(\curlX)$.
\end{lem}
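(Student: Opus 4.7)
My plan is to leverage the factorisation $\psi_\lambda(x,y) = \lambda^{-1} r_\lambda(v(x),v(y))\rho(x)$ from \eqref{eq:psi_r} together with Lemma~\ref{lem:r_lambda}. Since we work on $\curlP_+(\curlX)\times \curlP_+(\curlX)$, the density $v=\mu/\rho$ is well-defined and strictly positive pointwise, so $\psi_\lambda(x,y)$ is always given by this closed-form expression (the exceptional cases in Definition~\ref{def:genRFI-MAP}(2) never occur).

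For part $(i)$, I would first note that $\psi_\lambda(x,x)=0$ and $r_\lambda(v(x),v(x))=0$, so the diagonal contributes nothing and the sum in \eqref{eq:genRFIc} effectively runs over $x\neq y$, where $L(x,y)\geq 0$. By Lemma~\ref{lem:r_lambda}(iii), the map $\lambda\mapsto \lambda^{-1}r_\lambda(v(x),v(y))$ is monotonically decreasing in $\lambda$; so as $\lambda\searrow 0$, the integrand $L(x,y)\psi_\lambda(x,y)$ increases pointwise. By Lemma~\ref{lem:r_lambda}(iv), its pointwise limit equals
\[
L(x,y)\rho(x)\Bigl[v(y)-v(x)+v(x)\log\tfrac{v(x)}{v(y)}\Bigr]= L(x,y)\rho(x)\Bigl[v(y)-v(x)-v(x)\log\tfrac{v(y)}{v(x)}\Bigr].
\]
The monotone convergence theorem then delivers $\RF_L^\lambda(\mu\vert\rho) \nearrow \RF_L(\mu\vert\rho)$.

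For part $(ii)$, I would verify the two standard $\Gamma$-convergence conditions with respect to the (strong $\ell^1$, equivalently narrow) topology on $\curlP_+(\curlX)\times \curlP_+(\curlX)$. The $\Gamma$-$\limsup$ inequality is immediate using the constant recovery sequence $(\mu_\lambda,\rho_\lambda)\equiv(\mu,\rho)$, since part $(i)$ yields $\lim_{\lambda\searrow 0}\RF_L^\lambda(\mu\vert\rho)=\RF_L(\mu\vert\rho)$. For the $\Gamma$-$\liminf$ inequality, I would take any sequence $(\mu_\lambda,\rho_\lambda)\to(\mu,\rho)$ in $\curlP_+(\curlX)\times \curlP_+(\curlX)$ and fix an arbitrary $\lambda_0\in(0,1)$. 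For $\lambda<\lambda_0$, the monotonicity from Lemma~\ref{lem:r_lambda}(iii) gives $\RF_L^\lambda(\mu_\lambda\vert\rho_\lambda)\geq \RF_L^{\lambda_0}(\mu_\lambda\vert\rho_\lambda)$. Using the lower-semicontinuity of $\RF_L^{\lambda_0}$ on $\curlP(\curlX)\times\curlP(\curlX)$ from Lemma~\ref{l:props-gRFI}(ii), we deduce
\[
\liminf_{\lambda\searrow 0}\RF_L^\lambda(\mu_\lambda\vert\rho_\lambda)\geq \liminf_{\lambda\searrow 0}\RF_L^{\lambda_0}(\mu_\lambda\vert\rho_\lambda)\geq \RF_L^{\lambda_0}(\mu\vert\rho).
\]
Finally, taking $\lambda_0\searrow 0$ and invoking part $(i)$ gives $\liminf_{\lambda\searrow 0}\RF_L^\lambda(\mu_\lambda\vert\rho_\lambda)\geq \RF_L(\mu\vert\rho)$, completing the proof.

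The argument is essentially book-keeping once Lemma~\ref{lem:r_lambda} and Lemma~\ref{l:props-gRFI} are in hand; the only mildly delicate point is ensuring that the monotone convergence theorem genuinely applies, which requires separating out the vanishing diagonal and using non-negativity of $L(x,y)$ for $x\neq y$. The passage from pointwise convergence of $\RF_L^\lambda$ to $\Gamma$-convergence is the familiar trick of combining monotonicity of the family with lower-semicontinuity of each member, so no essential obstacle arises.
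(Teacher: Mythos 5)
Your proof is correct. Part $(i)$ is essentially the paper's argument: rewrite $\RF_L^\lambda$ via \eqref{eq:psi_r}, invoke the monotonicity and pointwise limit from Lemma~\ref{lem:r_lambda}$(iii)$--$(iv)$, and apply the monotone convergence theorem. For part $(ii)$ your $\Gamma$-$\liminf$ argument genuinely differs from the paper's. The paper works at the level of the integrand: it applies Fatou's lemma to reduce to $\liminf_{\lambda\to0}\lambda^{-1}r_\lambda(v_\lambda(x),v_\lambda(y))$ and then controls the term $v_\lambda(x)(s_\lambda^\lambda-1)/\lambda$ by an explicit $\e$-perturbation of the ratio $s_\lambda=v_\lambda(y)/v_\lambda(x)$, using the pointwise convergence $v_\lambda\to v$. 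You instead exploit the structure of the whole family: the monotonicity $\RF_L^\lambda\ge\RF_L^{\lambda_0}$ for $\lambda<\lambda_0$ (a termwise consequence of Lemma~\ref{lem:r_lambda}$(iii)$, legitimate since all terms are non-negative), the lower semicontinuity of each fixed $\RF_L^{\lambda_0}$ from Lemma~\ref{l:props-gRFI}$(ii)$, and then $\lambda_0\searrow0$ via part $(i)$. This is the standard principle that an increasing family of lower-semicontinuous functionals $\Gamma$-converges to its pointwise limit; it is shorter, avoids the pointwise $\e$-analysis entirely, and would generalise to any setting where the two lemmas hold, at the modest cost of relying on the already-established l.s.c.\ result rather than being self-contained at the level of $r_\lambda$.
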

\begin{proof}
	$(i)$ Set $v=\mu/\rho$. Using~\eqref{eq:genRFIc},~\eqref{eq:psi_r} we find
	\begin{align*}
	\RF_L^\lambda(\mu \vert \rho) &= \sum_{x,y \in \curlX} L(x,y)\rho(x)\frac{r_\lambda(v(x),v(y))}{\lambda}.
	\end{align*}
	Using Lemma~\ref{lem:r_lambda} and applying the monotone convergence theorem we find
	\begin{align*}
	\lim_{\lambda \rightarrow 0} \RF_L^\lambda(\mu_\lambda \vert \rho_\lambda) &=  \sum_{x,y \in \curlX} L(x,y)\rho(x)\left(\lim_{\lambda \rightarrow \infty}\frac{r_\lambda(v(x),v(y))}{\lambda}\right) \\
	&= \sum_{x,y \in \curlX} L(x,y)\rho(x)\left[ v(y)-v(x)+v(x)\log\left( \frac{v(x)}{v(y)}\right)\right]
	= \RF_L(\mu \vert \rho).
	\end{align*}
	The monotonicity of the convergence follows from the monotonicity of $\lambda\mapsto \lambda^{-1}r_\lambda$ in Lemma~\ref{lem:r_lambda}.
	
	\medskip
	
	\noindent$(ii)$ The proof of the $\Gamma$-limit consists of a liminf and a limsup inequality (see~\cite[Section 1.2]{braides02} for details).
	
	The liminf inequality states that for any sequences $(\mu_\lambda)_{\lambda \geq 0},(\rho_\lambda)_{\lambda \geq 0}\subset\curlP_+(\curlX)$ which converge in $\ell^1(\curlX)$ (and therefore pointwisely) to $\mu, \rho\in\curlP_+(\curlX)$ as $\lambda \rightarrow 0$, we have
	\begin{align}
	\liminf_{\lambda \rightarrow 0} \RF_L^\lambda(\mu_\lambda \vert \rho_\lambda) \geq \RF_L(\mu \vert \rho).
	\label{eq:gen_RFI_gamma_convergence_liminf}
	\end{align}
	Using the definition~\eqref{eq:genRFIc} of $\RF_L^\lambda$, \eqref{eq:psi_r} and Lemma~\ref{lem:r_lambda}, we find with Fatou's lemma that
	\begin{align*}
	\liminf_{\lambda \rightarrow 0} \RF_L^\lambda(\mu_\lambda \vert \rho_\lambda) = \liminf_{\lambda \rightarrow 0} \sum_{x,y \in \curlX} \rho_\lambda(x) L(x,y) \frac{r_\lambda(v_\lambda(x),v_\lambda(y))}{\lambda} 
	\geq \sum_{x,y \in \curlX} \rho(x) L(x,y) \liminf_{\lambda \rightarrow 0}\frac{r_\lambda(v_\lambda(x),v_\lambda(y))}{\lambda},
	\end{align*}
	where $v_\lambda:=\mu_\lambda/\rho_\lambda$. To complete the proof of the liminf inequality~\eqref{eq:gen_RFI_gamma_convergence_liminf} we need to bound the right hand side of the inequality above by the relative Fisher Information. Setting $s_\lambda(x,y)=v_\lambda(y)/v_\lambda(x)$, we find
	\[
	\liminf_{\lambda \rightarrow 0}\frac{r_\lambda(v_\lambda(x),v_\lambda(y))}{\lambda} = v(y)-v(x) - \limsup_{\lambda \rightarrow 0} \left\{v_\lambda(x)\left(\frac{s_\lambda(x,y)^\lambda-1}{\lambda}\right) \right\}.
	\]
	Due to the pointwise convergence $v_\lambda\to v$, we have that $s_\lambda(x,y)\to s(x,y)=v(y)/v(x)$. In particular, for any $\e>0$, we find a $\lambda_\e>0$ such that $|s_\lambda(x,y)-s(x,y)|<\e$ for all $\lambda \in (0,\lambda_\e)$. 
	Consequently, $0< s_\lambda(x,y) < s(x,y) + \e$ for $\lambda \in (0,\lambda_\e)$, which yields
	\[
	\frac{s_\lambda(x,y)^\lambda-1}{\lambda} < \frac{(s(x,y)+\e)^\lambda-1}{\lambda} \qquad\text{for all\, $\lambda \in (0,\lambda_\e)$}.
	\]
	Multiplication with $v_\lambda(x)$ and passing to the limit $\lambda\to0$, we then obtain (cf.~\eqref{eq:r_lam-lim})
	\[
	\limsup_{\lambda \rightarrow 0} \left\{v_\lambda(x)\left(\frac{s_\lambda(x,y)^\lambda-1}{\lambda}\right)\right\} \le v(x)\log(s(x,y)+\e).
	\]
	Since $\e > 0$ may be chosen arbitrarily small, we obtain
	\[
	\liminf_{\lambda \rightarrow 0} \RF_L^\lambda(\mu_\lambda \vert \rho_\lambda) \ge \sum_{x,y \in \curlX} \rho(x) L(x,y)\left[v(y) - v(x) + v(x)\log\left(\frac{v(x)}{v(y)}\right)\right] = \RF_L(\mu \vert \rho),
	\]
	as required.
	
	Next we prove the limsup inequality, wherein for fixed $\mu,\rho\in\curlP_+(\curlX)$ we need to prove  the existence of a sequence $(\mu_\lambda)_{\lambda \geq 0},(\rho_\lambda)_{\lambda \geq 0}$ in $\curlP_+(\curlX)$ which satisfies 
	\begin{align*}
	\limsup_{\lambda \rightarrow 0} \RF_L^\lambda(\mu_\lambda \vert \rho_\lambda) \leq \RF_L(\mu \vert \rho). 
	\end{align*}
	Due to $(i)$ we immediately see that the constant sequence for $(\mu_\lambda)_{\lambda \geq 0},(\rho_\lambda)_{\lambda \geq 0}$, i.e.\  $\mu_\lambda = \mu$ and $\rho_\lambda = \rho$ for all $\lambda>0$ does the job, which completes the proof.
\end{proof}

\begin{rem}[Role of irreducibility]
	While from the very outset we have assumed that the generator $L$ is irreducible (cf.~\eqref{eq:conditionsGenerator3}), it is worth noting that the definition of the generalised Fisher Information~\eqref{eq:genRFIc} is well defined even when this does not hold. Furthermore the various properties of the generalised Fisher Information outlined in this and the previous section do not require irreducibility as well. However, irreducibility of the generator is required to prove the FIR inequality in Theorem~\ref{thm:FIR}. 
\end{rem}

\subsection{Modified FIR for classical relative Fisher Information}\label{sec:FIR-gFIR}

In what follows, we use the  convergence result in Lemma~\ref{l:RFI-convergence} to prove a FIR-inequality with the classical relative Fisher Information~\eqref{eq:RFIBasicDef} by restricting the class of admissible curves $\mu$. In the next result we provide sufficient conditions under which 
\[
(1-\gamma) \RF_L(\mu \vert \rho) \leq \RF^\lambda_L(\mu \vert \rho)
\]
for some $\gamma\in(0,1)$. Recall from our discussion in Section~\ref{subsec:failureClassicalFIR} that this is not true in general since we can construct a sequence for which the the relative Fisher Information is unbounded while the rate functional is bounded (and therefore the generalised Fisher Information is bounded by Theorem~\ref{thm:FIR}). In fact, from Lemma~\ref{l:RFI-convergence} we know that the generalised Fisher Information $\RF_L^\lambda$ is always bounded from above by the Fisher Information $\RF_L$, and in the following result we show that the inequality can be reversed under certain conditions. 
\begin{lem}\label{lem:FIRrestrictedMeasure}
	Fix $K < \infty$, $\lambda \in (0,1)$ and let $\mu, \rho \in \curlP_+(\curlX)$ satisfy 
	\begin{align*}
	\sup_{x \in \curlX} \snormlr{\log\left(\dfrac{\mu(x)}{\rho(x)}\right)} \leq K.
	\end{align*}
	Then there exists a $\gamma = \gamma(K, \lambda) >0$ such that
	\begin{align}
	(1-\gamma) \RF_L(\mu \vert \rho) \leq \RF^\lambda_L(\mu \vert \rho). \label{eq:connectionRFIgenRFI}
	\end{align}
	Furthermore for every $K < \infty$ there exists a $\lambda_0 \in (0,1)$ such that $\gamma(K,\lambda) \in (0,1)$ for all $\lambda \in (0,\lambda_0)$.
\end{lem}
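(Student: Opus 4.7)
The plan is to reduce~\eqref{eq:connectionRFIgenRFI} to a pointwise comparison on $I_K := [e^{-K}, e^K]$ between the integrands of the two Fisher Informations. Writing $v = \mu/\rho$ (well-defined on $\curlX$ since the hypothesis forces $\mu,\rho \in \curlP_+(\curlX)$) and $\phi(\xi,\eta) := \eta - \xi + \xi\log(\xi/\eta)$, the representations
\begin{align*}
\RF_L(\mu \vert \rho) = \sum_{x,y \in \curlX} L(x,y)\,\rho(x)\,\phi(v(x),v(y)),\qquad \RF_L^\lambda(\mu \vert \rho) = \sum_{x,y \in \curlX} L(x,y)\,\rho(x)\,\frac{r_\lambda(v(x),v(y))}{\lambda},
\end{align*}
together with $L(x,y) \ge 0$ for $x \ne y$ and $v(x) \in I_K$ for every $x\in\curlX$, reduce~\eqref{eq:connectionRFIgenRFI} to the pointwise estimate
\begin{align*}
(1-\gamma)\,\phi(\xi,\eta) \;\le\; \frac{r_\lambda(\xi,\eta)}{\lambda} \qquad \text{for all } \xi,\eta \in I_K,
\end{align*}
for an appropriate $\gamma = \gamma(K,\lambda) \in (0,1)$.

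The central step is to analyse the ratio $F(\xi,\eta) := (r_\lambda(\xi,\eta)/\lambda)/\phi(\xi,\eta)$ on $I_K \times I_K$. Off the diagonal, both $\phi$ and $r_\lambda$ are smooth and strictly positive --- the former by strict concavity of $\log$, the latter by Lemma~\ref{lem:r_lambda}(ii) --- so $F$ is continuous and positive there. At any diagonal point $(\xi_0,\xi_0)$ with $\xi_0 \in I_K$, both functions vanish together with their first partial derivatives, and a second-order Taylor expansion gives
\begin{align*}
\phi(\xi,\eta) = \frac{(\xi-\eta)^2}{2\xi_0} + o\bigl((\xi-\eta)^2\bigr), \qquad \frac{r_\lambda(\xi,\eta)}{\lambda} = (1-\lambda)\,\frac{(\xi-\eta)^2}{2\xi_0} + o\bigl((\xi-\eta)^2\bigr),
\end{align*}
so that $F$ extends continuously to the diagonal by the constant value $1-\lambda$. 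A Hadamard-type factorisation $\phi(\xi,\eta) = (\xi-\eta)^2 P(\xi,\eta)$ and $r_\lambda(\xi,\eta)/\lambda = (1-\lambda)(\xi-\eta)^2 R(\xi,\eta)$ with $P,R$ continuous and strictly positive on $I_K \times I_K$ and coinciding on the diagonal (both equal $1/(2\xi_0)$) makes this continuity precise. Consequently $F$ is continuous and strictly positive on the compact set $I_K \times I_K$, attains a positive minimum $c(K,\lambda) := \min_{I_K \times I_K} F > 0$, and setting $\gamma(K,\lambda) := 1 - c(K,\lambda)$ yields~\eqref{eq:connectionRFIgenRFI}.

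For the final clause, the diagonal identity $F \equiv 1 - \lambda$ forces $c(K,\lambda) \le 1 - \lambda$, hence $\gamma(K,\lambda) \ge \lambda > 0$, while $c(K,\lambda) > 0$ gives $\gamma < 1$; thus in fact $\gamma(K,\lambda) \in [\lambda, 1) \subset (0,1)$ for every $\lambda \in (0,1)$ and every $K < \infty$, which is strictly stronger than the stated existence of a threshold $\lambda_0$. I expect the main technical point to be verifying the continuity of $F$ at the diagonal, that is, ensuring that the quadratic coefficients of $\phi$ and $r_\lambda/\lambda$ match up to the precise factor $1-\lambda$ and that the remainders are of genuinely higher order uniformly on $I_K$; the bound $\xi_0 \ge e^{-K} > 0$ coming from the hypothesis is essential here, as the common leading coefficient $1/(2\xi_0)$ would otherwise degenerate.
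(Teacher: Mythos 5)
Your proof is correct, but it takes a genuinely different route from the paper's. The paper never passes to the Lagrangian-side integrands $\phi$ and $r_\lambda/\lambda$: it rewrites both Fisher Informations in Hamiltonian form, introduces $\mathcal{D}(\mu,f)=\curlH(\mu,f)-\sum_{x,y}\mu(x)L(x,y)\nabla f(y,x)$, bounds $\mathcal{D}(\mu,\lambda f)\le\lambda^2\tilde{\mathcal{D}}(\mu,f)$ by manipulating the Taylor series of the exponential, and then runs a \emph{one-variable} compactness argument on $\varphi(\alpha)=(e^\alpha-1-\alpha)/(e^{|\alpha|}-1-|\alpha|)$ over $\alpha\in[-2K,2K]$, with $\alpha=\log v(y)-\log v(x)$. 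This produces $\gamma=\lambda/c_K$, which lies in $(0,1)$ only for $\lambda<\lambda_0:=c_K$ --- which is precisely why the statement carries the ``Furthermore'' clause. Your two-variable compactness argument on the ratio $F=(r_\lambda/\lambda)/\phi$ over $I_K\times I_K$ instead yields $\gamma=1-\min F\in[\lambda,1)$ for \emph{every} $\lambda\in(0,1)$, so you obtain a strictly stronger conclusion (one may take $\lambda_0=1$); the price is handling the removable singularity on the diagonal. On that point, be aware that the displayed two-variable second-order Taylor expansion by itself does not give continuity of $F$ at the diagonal, since the remainder is only $o\bigl(|(\xi,\eta)-(\xi_0,\xi_0)|^2\bigr)$ and can dominate $(\xi-\eta)^2$ along tangential approaches; the argument genuinely rests on the integral-remainder factorisation you invoke, e.g.\ $\phi(\xi,\eta)=(\xi-\eta)^2\int_0^1(1-s)\,\xi\,(\xi+s(\eta-\xi))^{-2}\,ds$ and its analogue for $r_\lambda$, both of which are valid and have integrands bounded above and away from zero on $I_K\times I_K$ with matching diagonal values $1/(2\xi_0)$. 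The initial reduction to the pointwise inequality is sound, since the diagonal terms of both sums vanish and $L(x,y)\ge0$ off the diagonal.
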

\begin{proof}
	The uniform bound on the logarithm implies that  $\RF_L(\mu \vert \rho)$ is well-defined. Using the definitions of these objects we can rewrite~\eqref{eq:connectionRFIgenRFI}  as
	\begin{align*}
	\dfrac{1}{\lambda} \curlH\left(\mu, \lambda\log\left(\dfrac{\mu }{\rho}\right)\right) - \sum_{x,y \in \curlX} \mu(x) L(x,y) \log\left(\dfrac{\mu(y) \rho(x)}{\rho(y) \mu(x)}\right)	= \RF_L(\mu \vert \rho) - \RF_L^\lambda(\mu \vert \rho)  \\
	\leq \gamma \RF_L(\mu \vert \rho) = \gamma\left[\curlH\left(\mu, \log\left(\dfrac{\mu }{\rho}\right)\right) - \sum_{x,y \in \curlX} \mu(x) L(x,y) \log\left(\dfrac{\mu(y) \rho(x)}{\rho(y) \mu(x)}\right)\right].
	\end{align*}
	To simplify the notation, we define
	\begin{align*}
	\mathcal{D}(\mu, f) := \curlH(\mu, f) - \sum_{x,y \in \curlX} \mu(x) L(x,y) (f(y) - f(x)) = \sum_{x,y \in \curlX} \mu(x) L(x,y) \left[e^{\nabla f(y,x)} - (1 + \nabla f(y,x))\right],
	\end{align*}
	where $\nabla f(y,x) = f(y) - f(x)$.
	Using the Taylor expansion of the exponential, we estimate 
	\begin{align*}
	&\mathcal{D}(\mu, \lambda f)  \leq \sum_{x,y \in \curlX} \mu(x) L(x,y)\sum\limits_{n\geq 2} \lambda^n\frac{|\nabla f(y,x)|^n}{n!}= \lambda^2 \sum_{x,y \in \curlX} \mu(x) L(x,y)\sum\limits_{n\geq 2} \lambda^{n-2}\frac{|\nabla f(y,x)|^n}{n!}\\
	&\leq \lambda^2 \sum_{x,y \in \curlX} \mu(x) L(x,y)\sum\limits_{n\geq 2}\frac{|\nabla f(y,x)|^n}{n!}
	=\lambda^2 \sum_{x,y \in \curlX} \mu(x) L(x,y) \left[e^{\snorm{\nabla f(y,x)}} - (1 + \snorm{\nabla f(y,x)})\right] =: \lambda^2 \tilde{\mathcal{D}}(\mu, f),
	\end{align*}
	where the second inequality follows since $\lambda \in(0,1)$. 
	Next, we show that there exists a $c_K > 0$ only depending on $K$ such that $\mathcal{D}(\mu, f) \ge c_K \tilde{\mathcal{D}}(\mu, f)$ uniformly for all $f$ with $\norm[\infty]{f} \leq K$. This is equivalent to proving that 
	\begin{align*}
	\varphi(\alpha) := \dfrac{e^\alpha - (1+\alpha)}{e^{\snorm{\alpha}} - (1 + \snorm{\alpha})} \geq c_K
	\end{align*}
	for $\alpha \in [-2K, 2K]$.
	If $\alpha > 0$, then $\varphi(\alpha) = 1$ and hence, it is sufficient to consider $\alpha \leq 0$.
	By using l'Hospital, we can continuously extend $\varphi$ to $\alpha = 0$ by defining $\varphi(0) =1$.
	Furthermore, $\varphi$ is positive and monotonically decreasing for $\alpha < 0$.
	Since $[-2K,2K]$ is compact, the existence of $c_K>0$ follows from the continuity and positivity of $\varphi$.
	
	We thus established that for every $K < \infty$, there exists a $c_K > 0$ only depending on $K$ such that
	\begin{align*}
	\dfrac{1}{\lambda} \mathcal{D}\left(\mu, \lambda \log\left(\dfrac{\mu }{\rho}\right)\right) \leq \dfrac{\lambda}{c_K} \mathcal{D}\left(\mu, \log\left(\dfrac{\mu }{\rho}\right)\right).
	\end{align*}
	Choosing $\gamma = \lambda/c_K>0$ then yields~\eqref{eq:connectionRFIgenRFI} and for all $\lambda < c_K$, we obtain $\gamma \in (0,1)$.
\end{proof}

Using this result along with Theorem~\ref{thm:FIR} we arrive at a modified FIR inequality for the classical relative Fisher Information. 
\begin{prop}\label{prop:usualFIR}
	Let $\rho\in A\curlC([0,T];\curlP(\curlX))$ be a solution of \eqref{eq:forwardKolmogorovEquation} and $\mu \in \curlC([0,T];\curlP_+(\curlX))$ satisfy $\I_L(\mu) + \RelEnt(\mu_0\vert\rho_0)<\infty$.
	Furthermore assume that there exists a $K < \infty$ such that
	\begin{align*}
	\sup_{t \in [0,T]} \sup_{x \in \curlX} \snormlr{\log\left(\dfrac{\mu_t(x)}{\rho_t(x)}\right)} \leq K. 
	\end{align*}
	Then there exists a sufficiently small $\lambda$ (see Lemma~\ref{lem:FIRrestrictedMeasure}) such that 
	\begin{align*}
	\RelEnt(\mu_T \vert\rho_T)  + (1- \gamma) \int_0^T \RF_L(\mu_t \vert \rho_t) \,dt \leq  \RelEnt(\mu_0 \vert \rho_0)+ \dfrac{1}{\lambda} \I_L(\mu),
	\end{align*}
	with $\gamma \in (0,1)$.
\end{prop}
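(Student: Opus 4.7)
The plan is to combine the two main results already established: the generalised FIR inequality from Theorem~\ref{thm:FIR}, which holds for every $\lambda\in(0,1)$ under the hypotheses of this proposition, and the pointwise comparison between $\RF_L^\lambda$ and $\RF_L$ provided by Lemma~\ref{lem:FIRrestrictedMeasure}. The uniform log-ratio bound $\sup_{t,x}\snormlr{\log(\mu_t(x)/\rho_t(x))}\le K$ is precisely what is needed to feed Lemma~\ref{lem:FIRrestrictedMeasure} with a time-independent constant $K$, yielding a single constant $\gamma=\gamma(K,\lambda)$ uniformly valid in $t$. The fact that $\rho_t\in \curlP_+(\curlX)$ (so that $\RF_L(\mu_t\vert\rho_t)$ is well defined) is a consequence of $\mu_t\in\curlP_+(\curlX)$ together with the log-ratio bound; in fact strict positivity of $\rho_t$ for $t>0$ is already guaranteed by Lemma~\ref{lem:Pos-fKol} used in the proof of Theorem~\ref{thm:FIR}.

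Concretely, I would first invoke Theorem~\ref{thm:FIR} to obtain, for any $\lambda\in(0,1)$,
\[
\RelEnt(\mu_T\vert\rho_T)+\int_0^T\RF_L^\lambda(\mu_t\vert\rho_t)\,dt \leq \RelEnt(\mu_0\vert\rho_0)+\frac{1}{\lambda}\I_L(\mu).
\]
Then I would fix $\lambda_0\in(0,1)$ as in Lemma~\ref{lem:FIRrestrictedMeasure} so that $\gamma(K,\lambda)\in(0,1)$ for all $\lambda\in(0,\lambda_0)$, choose $\lambda$ in this range, and apply the lemma pointwise in $t$: for a.e.\ $t\in[0,T]$,
\[
(1-\gamma)\,\RF_L(\mu_t\vert\rho_t)\leq \RF_L^\lambda(\mu_t\vert\rho_t).
\]
Integrating this inequality over $[0,T]$ and substituting into the FIR inequality above yields the claimed bound.

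The only non-computational issue is to justify the time integration of $\RF_L(\mu_t\vert\rho_t)$, i.e.\ its measurability and integrability. Measurability follows from the continuity $t\mapsto(\mu_t,\rho_t)$ in $\curlP(\curlX)\times\curlP(\curlX)$ combined with lower semicontinuity of $\RF_L$ on $\curlP_+(\curlX)\times\curlP_+(\curlX)$ (which, under the uniform log-ratio bound, can be read off the same Fatou argument used for $\RF_L^\lambda$ in Lemma~\ref{l:props-gRFI}); integrability then follows from the pointwise domination $(1-\gamma)\RF_L\leq \RF_L^\lambda$ and the finiteness of $\int_0^T\RF_L^\lambda\,dt$ granted by the FIR inequality. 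I do not foresee any genuine obstacle beyond this bookkeeping, since the key analytic work has already been carried out in Theorem~\ref{thm:FIR} and Lemma~\ref{lem:FIRrestrictedMeasure}.
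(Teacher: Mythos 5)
Your proposal is correct and follows exactly the route the paper intends: the paper states this proposition as an immediate consequence of Theorem~\ref{thm:FIR} combined with Lemma~\ref{lem:FIRrestrictedMeasure} applied pointwise in $t$ (the uniform-in-time bound $K$ giving a single $\gamma(K,\lambda)$), and offers no further argument. Your additional remarks on measurability and integrability of $t\mapsto\RF_L(\mu_t\vert\rho_t)$ are sound bookkeeping that the paper leaves implicit.
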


\begin{rem}[Convexity of generalised Fisher Information]
	Let $\mu,\rho\in\curlP_+(\curlX)$. Using the explicit representation for the Hamiltonian~\eqref{eq:hamiltonian} we find
	\begin{align*}
	\RF_L^\lambda(\mu \vert \rho) &= \sum_{x, y \in \curlX} L(x,y) \left[\mu(y)\dfrac{\rho(x)}{\rho(y)} - \mu(x)\right] - \dfrac{1}{\lambda} \sum_{x,y \in \curlX} \mu(x) L(x,y) \left[\left(\dfrac{\mu(y)\rho(x)}{\mu(x)\rho(y)}\right)^\lambda  - 1\right]\\
	&=\sum_{x, y \in \curlX} L(x,y) \left[\mu(y)\dfrac{\rho(x)}{\rho(y)} - \mu(x)\right] - \dfrac{1}{\lambda} \sum_{x,y \in \curlX} L(x,y) \left[\left(\dfrac{\rho(x)}{\rho(y)}\right)^\lambda \mu(y)^\lambda \mu(x)^{1-\lambda}  - \mu(x)\right]
	\end{align*}
	Since $\alpha^\lambda \beta^{1-\lambda}$ is concave for $\alpha,\beta>0$ and $\lambda\in (0,1)$ it follows that the third term on the right hand side is concave in $\mu$. Since the rest of the terms on the right hand side are linear in $\mu$ it follows that the generalised Fisher Information is convex in the first entry. 
\end{rem}

\subsection{Comparison with diffusion processes}\label{sec:ContStateSpace}

So far we have limited our discussion to Markov jump processes. In this section we will apply the connections between the relative entropy, the generalised Fisher Information and the rate functional described earlier to the case of diffusions. In what comes next, we first define each of these objects for diffusions and then connect to  the existing literature. Since our focus in this paper is on the discrete setting, we will keep the treatment in this section formal. 

Consider a stochastic differential equation on $\R^d$,
\begin{align}\label{eq:DiffSDE}
dX_t=b(X_t)dt + \sqrt{2} \sigma(X_t)dB_t,
\end{align}
where $b:\R^d\rightarrow\R^d$, $\sigma:\R^d\rightarrow\R^{d\times d}$, $B_t$ is a standard Brownian motion in $\R^d$ and $X_0\in\R^d$ is the initial data. The corresponding forward Kolmogorov equation (also called the Fokker-Planck equation in this case) evolves  according to 
\begin{align}\label{eq:FP}
\begin{dcases}	
\partial_t \rho = L^T \rho := \operatorname{div}(b \rho) + \nabla^2 : A\rho \\
\rho_{t = 0} = \rho_0,
\end{dcases}
\end{align}
where $A:=\sigma\sigma^T\in\R^{d\times d}$, $\rho_0\in \curlP(\R^d)$ is the initial data and $\nabla^2$ is the Hessian. Here $L^T$ is the adjoint corresponding to the generator 
\begin{align}\label{eq:diff-gen}
Lf(x):=- b(x) \cdot \nabla f(x) + A(x) : \nabla^2 f(x).
\end{align}
Throughout this section we assume that the coefficients and the solution to~\eqref{eq:FP} are sufficiently smooth (for a more general setup see~\cite{DLPSS-TMP}). 
For  any probability measures $\mu,\rho\in\curlP(\R^d)$ and a Markov generator $L$, we define the relative Fisher Information as
\begin{align*}
\RF_L(\mu\vert \rho) := \int_{\R^d}\left[ - L\log\left(\dfrac{\mu}{\rho}\right)\mu + L\left(\dfrac{\mu}{\rho}\right)\rho\right]= \int_{\R^d}\left| \nabla\log\left(\dfrac{\mu}{\rho}\right)\right|^2_A\mu,
\end{align*}
where $|x|^2_A:=x^TAx$. This is continuous version of the classical relative Fisher Information~\eqref{eq:RFIBasicDef}. Here we have inherently assumed that $\mu,\rho$ have sufficiently smooth densities (not renamed) such that this object is well defined. Note that, since we are working with `linear' diffusion processes, the Fisher Information depends on the generator $L$ only via the matrix $A$.  As in the discrete case (recall~\eqref{eq:Ent-Fish-Rel}), when $\mu,\rho$ are solutions to~\eqref{eq:FP}, the relative Fisher Information satisfies the relation 
\begin{align*}
\RF_L(\mu_t \vert \rho_t) = - \dfrac{d}{dt} \RelEnt(\mu_t \vert \rho_t).
\end{align*}
The corresponding large-deviation rate functional $\I_{L}:\curlC([0,T];\curlP(\curlX))\rightarrow\R$ is (see eg.~\cite{dawsonGaertner87,Oelschlager84}) 
\begin{align}\label{def:RF-diff}
\I_L(\mu) = \sup_{f \in C^{1}([0,T];C^2_b(\R^d))}  \int_{\R^d} f_T \,d\mu_T - \int_{\R^d} f_0 \,d\mu_0 - \int_0^T \left(\int_{\R^d} \partial_t f \,d\mu_t   + \curlH(\mu_t, f_t)\right) \,dt,
\end{align}
with the Hamiltonian
\begin{align}\label{eq:Diff-Ham}
\curlH(\mu, f) :=\int_{\R^d} e^{-f} L e^{f} \,d\mu  = \int_{\R^d} Lf + \Gamma(f,f) \,d\mu.
\end{align}
Here $\Gamma$ is the carr{\'e}-du-champ operator corresponding to the Markov generator $L$ (see~\cite[Section 1.4.2]{bakry14}) 
\begin{align*}
\Gamma(f,g) := \frac{1}{2}\left[L(fg)-f Lg - gLf\right] = \nabla f\cdot A\nabla g.
\end{align*}
The ($A$-weighted) quadratic structure on the right hand side is particular to the diffusion processes.

For any $\lambda\in(0,1)$, and probability measures $\mu,\rho\in\curlP_+(\R^d)$, the continuous state-space counterpart of the generalised Fisher Information~\eqref{eq:genRFI} is
\begin{align*}
\RF^\lambda_L(\mu \vert \rho) := \int_{\R^d}\frac{\mu}{\rho}L^*\rho -  \dfrac{1}{\lambda} \curlH\left(\mu, \lambda \log\left(\dfrac{\mu }{\rho}\right)\right) =  \left(1 - \lambda\right) \RF_L(\mu \vert \rho),
\end{align*}
where $L^*$ denotes the $L^2(\R^d,\rho)$-adjoint of $L$.
The equality here follows by using~\eqref{eq:Diff-Ham}. Note that this is different from the discrete case where the generalised Fisher Information is bounded from above by the relative Fisher Information (recall Lemma~\ref{l:RFI-convergence}) and the reversed inequality only holds in a fairly restrictive setting (see Lemma~\ref{lem:FIRrestrictedMeasure}). This is due to the simpler structure of the Hamiltonian~\eqref{eq:Diff-Ham} which can be written as a combination of a linear and a quadratic term, as opposed to a genuine exponential structure in the discrete case.

Following the formal approach used for deriving the FIR inequality (cf.~Section~\ref{sec:MainRes}), we arrive at
\begin{align*}
\RelEnt(\mu_T \vert \rho_T) + (1-\lambda)\int_0^T\RF_L(\mu_t \vert \rho_t)dt \leq   \RelEnt(\mu_0 \vert \rho_0) + \dfrac{1}{\lambda} \I_L(\mu),
\end{align*}
which has been derived recently in~\cite{DLPSS-TMP}, and without the connection to large deviations in~\cite{bogachevRoecknerShaposhnikov16}. In~\cite{bogachevRoecknerShaposhnikov16} such an inequality is proven rigorously by directly studying the time derivative of the relative entropy and using appropriate regularity results for a very wide class of Fokker-Planck equations, while here we derive this inequality by studying the dual formulation of the rate functional. Similar ideas have also been developed for the (nonlinear) Vlasov-Fokker-Planck equation in~\cite[Theorem 2.3]{duongLamaczPeletierSharma17}.

\section{Coarse-graining }\label{sec:CGApplication}

In this section we study the coarse-graining problem introduced in  Section~\ref{subsec:application}, which we now recall. 
Consider a family of forward Kolmogorov equations
\begin{align}
\begin{dcases}
\partial_t \mu^\e = (L^\e)^T \mu^\e, \\
\mu^\e_{t = 0} = \mu_0,
\end{dcases}
\label{eq:KolEq}
\end{align}
on $\curlX = \curlY \times \curlZ$ with $\curlY = \{0,1\}$ and $\curlZ = \{1, \dots, n\}$, generated by the family of operators 
\begin{align}
L^\e= \frac{1}{\e}Q +  C := \frac{1}{\e}\begin{pmatrix}Q_0 & 0 \\ 0 & Q_1\end{pmatrix} +  \begin{pmatrix}D_0 & C_{0,1} \\
C_{1,0} & D_1 \end{pmatrix},
\label{eq:CG-res-gen}
\end{align}
i.e.\ with
\[
Q((y,z),(y',z')) = \begin{cases}
Q_y(z,z') &\text{if\, $y'=y$}\\
0 &\text{otherwise}
\end{cases},\qquad C((y,z),(y',z')) = \begin{cases}
C_{y,y'}(z,z') &\text{if\, $y'\ne y$}\\
D_y(z) & \text{if $y' = y$ and $z'=z$} \\ 
0 &\text{otherwise}
\end{cases}
\]
for $x=(y,z)$, $x'=(y',z')\in\curlX$ satisfying
\begin{align*}
\forall x\in\curlX: \ \sum_{x'\in\curlX}Q(x,x')=0=\sum_{x\in\curlX} C(x,x'),
\end{align*}
and diagonal matrix $D_y$, $y\in\curlY$, which satisfies 
\begin{align}\label{eq-def:D}
\forall z\in\curlZ: \ D_y(z):=-\sum_{z'\in\curlZ} C_{y,1-y}(z,z').
\end{align}
Here $L^\e$ is irreducible, and therefore~\eqref{eq:KolEq} admits a stationary solution $\pi^\e\in\curlP(\curlX)$. Additionally we assume that $Q_0$ and $Q_1$ are irreducible as well.
In what follows we will use $\nabla f(y,x):=f(y)-f(x)$. 

\begin{rem}[Topologies on $\curlP(\curlX)$]\label{rem:topologies}
	Since $\curlX$ is a finite set, $\curlP(\curlX)$ can be identified with a closed, bounded (and thus compact) subset of the finite-dimensional vector space $\R^\curlX$.
	Therefore, there is no necessity to distinguish between different notions of convergence on $\curlP(\curlX)$, since there is a unique topology which makes $\R^\curlX$ a (Hausdorff) topological vector space.
	In particular, the notion of uniform convergence (generated by the total variation distance) and narrow convergence (weak convergence with test functions in $C_b$) are equivalent and coincide with the standard convergence on $\R^\curlX$.
\end{rem}

The rest of this section is devoted to studying the behaviour of~\eqref{eq:KolEq} in the limit of $\e\rightarrow 0$.  We now outline an abstract variational framework, developed in \cite{duongLamaczPeletierSharma17}, that will be used to study this problem. 

\subsection{A variational framework for coarse-graining}\label{subsec:variationalFrameworkCG}
Let $\rho^\e:[0,T]\rightarrow \curlP(\curlX)$ be a family of solutions to the forward Kolmogorov equations~\eqref{eq:KolEq}, and let $\I_{L^\e}$ be the corresponding family of large-deviation rate functionals associated to the underlying stochastic process (recall Theorem~\ref{thm:LDP}). 
Since the solutions $\rho^\e$ is characterised by $\I_{L^\e}$ via $\I_{L^\e}(\rho^\e)=0$, establishing the limit behaviour as $\e\rightarrow 0$ consists of answering two questions:

\begin{enumerate}[label=(\arabic*)]
	\item \textit{Compactness:} Do solutions of $\I_{L^\e}(\rho^\e) = 0$ have useful compactness properties, allowing one to extract a subsequence that converges in a suitable topology, say $\tau$?
	
	\item \textit{Liminf inequality:} Is there a limit functional $\I\geq 0$ such that 
	\begin{align}\label{que-LimInf}
	\rho^\e\xrightarrow{\tau} \rho \  \Longrightarrow \ \liminf_{\e \searrow 0} \I_{L^\e}(\rho^\e) \geq \I(\rho)?
	\end{align}
	And if so, does one have 
	\begin{align*}
	\I(\rho)=0 \ \Longleftrightarrow \ \partial_t\rho= L^T\rho,
	\end{align*}
	for some limiting operator $L$?
\end{enumerate}

As we shall see in the coming sections, the method we use answers both these questions for \emph{approximate solutions}. By this we mean that we work with a sequence of time-dependent probability measures which satisfy $\sup_{\e>0} \I_{L^\e}(\mu^\e)<\infty$. The exact solutions are special cases when $\I_{L^\e}(\mu^\e)=0$. Consequently, all our results follow from this uniform bound and assumptions on well-prepared initial data (which is exactly the right hand side of the FIR inequality~\eqref{eq:FIRwGeneralisedRFI}). 

The question of compactness will be answered by the uniform bound on the rate functional. Since our state space is finite, this bound along with the Arzel{\`a}-Ascoli theorem will provide us with suitable compactness properties (see Section~\ref{sec:CGComp} for details).  

In answering the second question, we will make use of two crucial ingredients. First, that the rate functional has a duality relation of the type (recall Theorem~\ref{thm:LDP}), 
\begin{align}\label{def:RF-dual}
\I_L(\mu) = \sup_{f} \LDJ_L(\mu, f),
\end{align}
where the supremum is taken over an appropriate class of functions. Second, that the problem is of coarse-graining type as we expect that in the limit of $\e\rightarrow 0$, the dynamics in each macro-state equilibrates and the limiting object is a jump process across the macro-states (recall discussion in Section~\ref{subsec:application}). We characterise this behaviour by means of a coarse-graining map which identifies the relevant degrees of freedom. In our setting we choose this to be a mapping onto the macro-states, i.e.\ $\xi:\curlX\rightarrow\curlY$ with $\xi(x)=y$ for every $x=(y,z)\in\curlX$. The coarse-grained equivalent of $\rho^\e:[0,T]\rightarrow\curlP(\curlX)$ is the push-forward $\hat\rho^\e:=\xi_\#\rho^\e:[0,T]\rightarrow\curlP(\curlY)$. 
For a discussion on coarse-graining mappings in other contexts see~\cite[Section 1.4]{Sharma17}. 

The core of the argument for the liminf inequality~\eqref{que-LimInf} is summarised in the following formal calculation:
\begin{eqnarray}
\I_{L^\e} (\rho^\e) &=& \sup_f \; \LDJ_{L^\e}(\rho^\e,f)\nonumber\\
&\stackrel{f=g\circ \xi} \geq & \sup_g \;\LDJ_{L^\e}(\rho^\e,g\circ \xi)\nonumber\\
&& \phantom{\sup\; \widehat \LDJ_{L^\e}(}\Big\downarrow\;\e\rightarrow 0 \label{eq:formalLiminf}\\[\jot]
&&\sup_g\;  {\LDJ}({\rho},g\circ\xi)\nonumber\\
&\stackrel{(\ast)}\eqqcolon & \sup_g\; {\hat{\LDJ}}({\hat{\rho}},g)\quad
\stackrel{(\ast\ast)}\eqqcolon \quad{\hat{\I}}({\hat{\rho}}) \nonumber
\end{eqnarray}
Let us now go through each of these lines. The first line is the dual characterisation of the rate functional~\eqref{def:RF-dual}. The  inequality on the second line follows by restricting the class of admissible functions $f$ to functions of the type $f = g \circ \xi$. Here we have made a choice to restrict ourselves to functions of the form $f=g\circ\xi$. Following this inequality we pass to the limit using the compactness results derived earlier.  The choice of coarse-graining map is crucial here since we cannot expect convergence for functions $f$ which still have access to the full information. 

In the next step $(\ast)$, we pass from the full limit measure $\rho$ to the coarse-grained measure $\hat{\rho}$.
To do that rigorously we need a \emph{local-equilibrium} result, which describes how we can reconstruct the full information in $\rho$ which is lost by considering only $\hat{\rho}$. As we shall see in Section~\ref{sec:LocEq}, this result crucially depends on the generalised Fisher Information and the FIR inequality. 

Finally, we define in $(\ast\ast)$ a new functional $\hat{\I}$. In a successful application of coarse-graining, this functional is connected to an evolution equation similar to~\eqref{eq:propertiesRateFunctional}.
In our example it turns out that $\hat{\I}$ is again a large deviations rate functional and connected to a lower dimensional effective equation.

In what follows we go through each of the steps described above to derive the behaviour of~\eqref{eq:KolEq} as $\e\rightarrow 0$. In Section~\ref{sec:CGComp} we prove compactness results, Section~\ref{sec:LocEq} contains the local-equilibrium result and in Section~\ref{sec:CGLimInf} we prove the liminf inequality.

\subsection{Compactness}\label{sec:CGComp}
In the following result we discuss the compactness properties. We prove a two-level compactness result, a weaker result on the original space $\curlX$ and a stronger result on the coarse-grained space $\curlY$. 

\begin{lem}\label{lem:compactness}
	Let a sequence $\mu^\e\in \curlC([0,T];\curlP(\curlX))$ satisfy 
	\[
	\sup_{\e >0} \I_{L^\e}(\mu^\e)<\infty.
	\] 
	Then there exists $\mu\in \curlM([0,T] \times \curlX)$ and a subsequence (not relabelled) such that 
	\begin{enumerate}[label=(\roman{*})]
		\item $\mu^\e \rightarrow \mu$ in $\curlM([0,T] \times \curlX)$ narrowly with $\mu=\int_0^T \mu_t$ for a Borel family $\{\mu_t\}_{t\in(0,T)}$.
		\item $\xi_{\#}\mu^\e \rightarrow \xi_{\#}\mu$ in $\curlC([0,T];\curlP(\curlY))$ with respect to the uniform topology in time.
	\end{enumerate}
\end{lem}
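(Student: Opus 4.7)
The plan is to prove (i) by compactness of $\curlM([0,T]\times\curlX)$ combined with the disintegration theorem, and (ii) by an Arzel\`a-Ascoli argument in $\curlC([0,T];\curlP(\curlY))$ that exploits the fact that the singular part of $L^\e$ vanishes against test functions of the form $g\circ\xi$. The uniform bound $\sup_\e \I_{L^\e}(\mu^\e)<\infty$ plays two distinct roles: for (i) it is in fact not needed, while for (ii) it supplies an $\e$-uniform modulus of continuity through the dual representation~\eqref{eq:BoundRF}.

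For (i), I view each $\mu^\e\in\curlC([0,T];\curlP(\curlX))$ as the measure $\int_0^T \mu^\e_t\,dt$ on $[0,T]\times\curlX$ of total mass $T$. Since $\curlX$ is finite the product space is compact, so Prokhorov's theorem yields a subsequence converging narrowly to some $\mu\in\curlM([0,T]\times\curlX)$. Testing with functions $\phi(t)$ (independent of $x$) shows that the projection of $\mu$ onto $[0,T]$ is Lebesgue measure, and the disintegration theorem then produces the Borel family $\{\mu_t\}_{t\in(0,T)}\subset\curlP(\curlX)$ with $\mu=\int_0^T \mu_t\,dt$.

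For (ii), the crucial observation is that $Q(x,x')\ne 0$ forces $\xi(x)=\xi(x')$, so for any $g:\curlY\to\R$ the Hamiltonian $\curlH^\e$ associated to $L^\e$ satisfies
\[
\curlH^\e(\mu,\alpha\, g\circ\xi) = \sum_{x,x'\in\curlX} \mu(x) C(x,x')\bigl[e^{\alpha (g(\xi(x'))-g(\xi(x)))}-1\bigr] \leq c_1\bigl(e^{2\alpha\|g\|_\infty}-1\bigr),
\]
uniformly in $\e$ and $\mu\in\curlP(\curlX)$, with $c_1$ depending only on $C$. Applying~\eqref{eq:BoundRF} to the test function $f_r(x)=\alpha\,g(\xi(x))\Indicator_{[s,t]}(r)$ (permissible since $\mu^\e$ is absolutely continuous by Theorem~\ref{thm:LDP}) gives for every $0\leq s<t\leq T$
\[
\alpha\bigl\langle g,\xi_{\#}\mu^\e_t-\xi_{\#}\mu^\e_s\bigr\rangle \leq \I_{L^\e}(\mu^\e) + c_1\bigl(e^{2\alpha\|g\|_\infty}-1\bigr)(t-s).
\]
Using $-g$ in place of $g$, setting $A:=\sup_\e\I_{L^\e}(\mu^\e)$ and optimising in $\alpha>0$ (e.g.\ $\alpha=(2\|g\|_\infty)^{-1}\log(1+A/(c_1(t-s)))$) produces a modulus of continuity of order $A/\log(A/(t-s))$ that is independent of $\e$, showing equicontinuity of $\{\xi_{\#}\mu^\e\}$ in $\curlC([0,T];\curlP(\curlY))$.

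Since $\curlP(\curlY)$ is a compact subset of a finite-dimensional space, Arzel\`a-Ascoli supplies a further subsequence with $\xi_{\#}\mu^\e\to\hat\nu$ uniformly in $\curlC([0,T];\curlP(\curlY))$. To identify $\hat\nu$ with $\xi_{\#}\mu$, I test the narrow convergence from (i) against functions of the product form $\phi(t)g(\xi(x))$: both $\int_0^T\phi(t)\langle g,\xi_{\#}\mu_t\rangle\,dt$ and $\int_0^T\phi(t)\langle g,\hat\nu_t\rangle\,dt$ arise as the limit of $\int_0^T\phi(t)\langle g,\xi_{\#}\mu^\e_t\rangle\,dt$, so the two curves coincide almost everywhere in $t$; one then redefines $\mu_t$ on a Lebesgue-null set so that $\xi_{\#}\mu_t=\hat\nu_t$ for every $t\in[0,T]$. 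The main technical obstacle is the equi-continuity estimate in (ii): the $\alpha$-optimisation is the only place where the finite rate-functional bound is used, and it is precisely the $\xi$-invariance of $Q$ that prevents the $\e^{-1}$ prefactor in $L^\e$ from entering the Hamiltonian bound.
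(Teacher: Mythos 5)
Your proposal is correct and follows essentially the same route as the paper: Prokhorov plus disintegration for (i), and for (ii) the dual representation \eqref{eq:BoundRF} tested against $\alpha\,(g\circ\xi)\Indicator_{[s,t]}$, the observation that the $\e^{-1}Q$-part of the Hamiltonian vanishes on $\xi$-measurable functions, optimisation of the scaling parameter to get a logarithmic $\e$-uniform modulus of continuity, and Arzel\`a--Ascoli. Your explicit identification of the Arzel\`a--Ascoli limit with $\xi_{\#}\mu$ via product test functions is a detail the paper leaves implicit, but it does not change the argument.
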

\begin{proof}
	Since $[0,T] \times \curlX$ is compact, every subset of $\curlM([0,T]\times \curlX)$ is tight. Furthermore, since $\mu^\e = \int_0^T \mu^\e_t$ with $\mu^\e_t \in \curlP(\curlX)$ the set $\{\mu^\e, \e > 0\}$ is uniformly bounded in $\curlM([0,T]\times \curlX)$, and so by Prokhorov's theorem and the equi-integrability of the map $t\mapsto \mu_t^\e(\curlX)$, we have that $\mu^\e\to\mu$ narrowly in $\curlM([0,T]\times \curlX)$ for some $\mu\in \curlM([0,T]\times \curlX)$. Furthermore where $\mu$ has the representation $\mu = \int_0^T \mu_t$ for a Borel family $\{\mu_t\}_{t\in(0,T)}$ due to the disintegration theorem.
	
	To prove the second statement we use the Arzel{\`a}-Ascoli theorem~\cite[Theorem 45.4]{munkres00}. 
	Using the characterisation \eqref{eq:BoundRF} of the rate functionals $\I_{L^\e}$, we obtain 
	\begin{align}\label{eq:RF-ModLag}
	M \ge \I_{L^\e}(\mu^\e) \ge \int_0^T \left[\left\langle \Indicator_{[s_1,s_2]}(t)\frac{g\circ\xi}{\lambda}, \partial_t \mu_t^\e\right\rangle - \curlH^\e\!\left(\mu_t^\e, \Indicator_{[s_1,s_2]}(t)\frac{g\circ\xi}{\lambda}\right) \right]\,dt,
	\end{align}
	for any $s_1,s_2\in[0,T]$, $g\in\Cb(\curlY)$ and $\lambda>0$, where $\curlH^\e$ is the Hamiltonian corresponding to the generator $L^\e$ (see~\eqref{eq:hamiltonian}). We then calculate
	\begin{align*}
	\curlH^\e\!\left(\mu_t^\e, \Indicator_{[s_1,s_2]}(t)\dfrac{g \circ \xi}{\lambda}\right) &= \sum_{x_1 \in \curlX} \mu_t^\e(x_1) \sum_{z_2 \in \curlZ} \e^{-1} Q_{y_1}(z_1,z_2) \left(e^{-\frac{1}{\lambda}\Indicator_{[s_1,s_2]}(t)\nabla g(y_1,y_1)}-1\right) \\
	&\hspace*{5em}+ \sum_{x_1 \in \curlX} \mu_t^\e(x_1) \sum_{z_2 \in \curlZ} C_{y_1,1-y_1}(z_1,z_2) \left(e^{-\frac{1}{\lambda}\Indicator_{[s_1,s_2]}(t)\nabla g(y_1,1-y_1)}-1\right) \\
	&\leq 0+ \bar{C} \left(e^{\frac{1}{\lambda}2\norm[\infty]{g}}-1\right)\Indicator_{[s_1,s_2]}(t),
	\end{align*}
	where $\bar{C}:=\sup_{y\in\curlY}\|C_{y,1-y}\|$ is independent of $\e > 0$ and $s \in [0,T]$ and the zero in the final inequality follows since $\nabla g(y_1,y_1) = 0$. Note that $D_0$ and $D_1$ do not contribute to the equality above. Substituting this bound into \eqref{eq:RF-ModLag} with $\lambda = -\|g\|_\infty/\log\sqrt{|s_2-s_1|}$ and using absolute continuity on $t \mapsto \mu_t^\varepsilon$ we find
	\begin{align*}
	\langle g, \xi_\#\mu_{s_2}^\e - \xi_\#\mu_{s_1}^\e\rangle = \int_{s_1}^{s_2} \langle g\circ\xi, \partial_t \mu_t^\e\rangle \,dt &\le \lambda M + \lambda\bar{C}|s_2-s_1|\left(e^{\frac{1}{\lambda}2\norm[\infty]{g}}-1\right) \\
	&= \frac{\|g\|_\infty M}{-\log\sqrt{|s_2-s_1|}} + \frac{\|g\|_\infty \bar{C}|s_2-s_1|}{-\log\sqrt{|s_2-s_1|}}\left(
	\frac{1}{|s_2-s_1|}-1\right)\\
	&\le 2\|g\|_\infty\frac{M + \bar{C}|1-|s_2-s_1||}{|\log |s_2-s_1||}.
	\end{align*}
	Since the narrow topology coincides with the uniform topology and the upper bound does not depend on $\varepsilon$ this gives equicontinuity of $(\xi_\#\mu^\e)$.
	Furthermore, $\xi_\#\mu^e$ is naturally bounded from above in $\curlC([0,T];\curlP(\curlY))$ and thus, we can apply the Arzel{\`a}-Ascoli theorem which gives the statement.
\end{proof}

\subsection{Local-equilibrium}\label{sec:LocEq}
As stated earlier, our interest is in studying the slow behaviour of the dynamics and we do this by focussing on a coarse-grained description of the model (via $\xi$). However information is lost in the coarse-graining procedure, and in this section we reconstruct this lost information by proving a `local-equilibrium' result, which crucially depends on the FIR inequality. 

The central idea is to pass $\e\rightarrow 0$ in the FIR inequality, obtain a vanishing bound on the generalised Fisher Information and then study the properties of the limiting object. 
More precisely, we combine the lower-semicontinuity property of $\RF_L^\lambda$ with the FIR inequality \eqref{eq:FIRwGeneralisedRFI} to show that in the limit of $\e\rightarrow 0$, the time-dependent sequence $\mu^\e$ becomes stationary in the micro-state variable and the time dependence completely shifts onto the macro-state variable. We first prove an auxiliary lemma which discusses the limit of the stationary measure $\pi^\e$ and then prove the local-equilibrium result.  
\begin{lem}\label{lem:stationary-measures}
	Let $(\pi^\e)_{\e > 0}\subset\curlP(\curlX)$ be a sequence of stationary measures corresponding to $L^\e$, i.e.\ $(L^\e)^T\pi^\e = 0$ for every $\e>0$. Then there exists a positive probability measure $\pi\in\curlP_+(\curlX)$ satisfying $Q^T\pi=0$, with $\pi^\e\to\pi$ in $\curlP_+(\curlX)$.
\end{lem}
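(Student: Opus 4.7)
The plan is to use compactness of $\curlP(\curlX)$ (since $\curlX$ is finite) to extract a subsequential limit, then identify this limit uniquely by performing a two-scale expansion of the stationarity relation $(L^\e)^T\pi^\e = 0$. Any convergent subsequence produces a candidate $\pi\in\curlP(\curlX)$; the main task is showing $\pi$ lies in $\curlP_+(\curlX)$ and is uniquely characterised, which then forces convergence of the whole sequence.

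First I would multiply the stationarity equation by $\e$ to obtain $Q^T\pi^\e + \e C^T\pi^\e = 0$ and pass to the limit along a convergent subsequence, yielding $Q^T\pi = 0$. Since $Q$ is block-diagonal with blocks $Q_0,Q_1$ which are irreducible on $\curlZ$, the kernel of $Q^T$ on $\curlP(\curlX)$ is spanned by the two block-supported stationary distributions, so
\begin{equation*}
\pi(y,z) = \alpha_y\pi_y(z),\qquad \alpha_0,\alpha_1\ge 0,\ \alpha_0+\alpha_1 = 1,
\end{equation*}
where $\pi_y\in\curlP_+(\curlZ)$ is the unique stationary measure of $Q_y$.

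Next I would determine the weights $\alpha_y$ by projecting the stationarity equation onto the slow variable. For fixed $y\in\curlY$, summing $\sum_x L^\e(x,(y,z))\pi^\e(x) = 0$ over $z\in\curlZ$ kills the $Q$ contribution (since $Q_y$ is a generator, its row sums vanish), leaving an $\e$-independent identity involving only $C$. Using the construction $D_y(z) = -\sum_{z'}C_{y,1-y}(z,z')$ from \eqref{eq-def:D}, this identity reduces in the limit to the detailed balance-type condition $\alpha_0\lambda_0 = \alpha_1\lambda_1$ with $\lambda_y = \sum_{z,z'}\pi_y(z)C_{y,1-y}(z,z')$, exactly the rates appearing in Theorem~\ref{thm:Intro-CG-Sol}. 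Combined with $\alpha_0+\alpha_1 = 1$ this yields
\begin{equation*}
\alpha_0 = \frac{\lambda_1}{\lambda_0+\lambda_1},\qquad \alpha_1 = \frac{\lambda_0}{\lambda_0+\lambda_1}.
\end{equation*}

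The positivity of $\pi$ then reduces to showing $\lambda_0,\lambda_1>0$: since $\pi_y$ is strictly positive on $\curlZ$ and irreducibility of $\tilde L^\e$ forces each $C_{y,1-y}$ to have a strictly positive entry, both rates are positive. Hence $\pi\in\curlP_+(\curlX)$. As the limit $\pi$ is uniquely pinned down by $(Q^T\pi=0,\alpha_0\lambda_0=\alpha_1\lambda_1,\alpha_0+\alpha_1=1)$ independently of the subsequence, the Urysohn subsequence principle gives $\pi^\e\to\pi$ along the full sequence. The main obstacle is the bookkeeping for the second step---making sure that the right combination of rows and columns of $C^T\pi^\e$ is summed so that the $1/\e$ term drops out cleanly and the leading-order balance emerges; beyond this it is a straightforward two-scale computation.
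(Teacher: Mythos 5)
Your proposal is correct and follows essentially the same route as the paper: extract a limit by compactness, multiply the stationarity relation by $\e$ to get $Q^T\pi=0$ and the block decomposition $\pi=(\alpha\pi_0,(1-\alpha)\pi_1)$, then sum the stationarity equation over $z$ for fixed $y$ so the $Q$-part drops out and the limit yields $\alpha\lambda_0=(1-\alpha)\lambda_1$, with positivity of $\lambda_y$ coming from irreducibility. Your explicit appeal to the subsequence principle to upgrade subsequential to full convergence is a point the paper's proof glosses over, but otherwise the arguments coincide.
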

\begin{proof}
	Due to the compactness of $\curlP(\curlX)$, we find some $\pi\in\curlP(\curlX)$ such that $\pi^\e\rightarrow\pi$ as $\e\rightarrow 0$. Passing $\e\rightarrow0$ in $\e(L^\e)^T\pi^\e=0$ yields
	\begin{align}\label{eq:LimitStatMeas}
	Q^T\pi=0  \ \Longleftrightarrow  \ \exists\, \alpha\in [0,1] \text{ such that } \pi=\begin{pmatrix}\alpha\pi_0\\ (1-\alpha)\pi_1\end{pmatrix},
	\end{align}
	where $\pi_y\in\curlP(\curlZ)$ is the stationary measure of $Q_y$, $y\in\curlY$. 
	
	We now show that $\pi\in\curlP_+(\curlX)$, which follows if $\alpha\in(0,1)$ since $\pi_y\in\curlP_+(\curlZ)$ due to the irreducibility of $Q_y$. Using $(L^\e)^T\pi^\e=0$ and $\sum_{z'\in \curlZ} Q_{y}(z,z')=0$, for every $y\in\curlY$ we find
	\begin{align*}
	0=\sum_{z\in \curlZ}((L^\e)^T\pi^\e)(y,z)&=\sum_{z,z'\in\curlZ}\Bigr[\frac{1}{\e} Q_y(z',z)\pi^\e(y,z')+C_{1-y,y}(z',z)\pi^\e(1-y,z')\Bigl]+\sum_{z\in\curlZ}D_y(z)\pi^\e(y,z)\\
	&= \sum_{z,z'\in\curlZ}C_{1-y,y}(z',z)\pi^\e(1-y,z') + \sum_{z\in\curlZ} D_y(z)\pi^\e(y,z),\end{align*}
	Furthermore passing $\e\rightarrow 0$ and using~\eqref{eq-def:D} we obtain
	\begin{align*}
	0=-\sum_{z\in\curlZ}D_{1-y}(z)\pi(1-y,z) + \sum_{z\in\curlZ}D_y(z)\pi(y,z).
	\end{align*}
	Finally, using~\eqref{eq:LimitStatMeas} and $\lambda_y:=-\sum_{z\in\curlZ}D_y(z)\pi_y(z)$ we have
	\begin{align*}
	-\alpha\lambda_0+(1-\alpha)\lambda_1=0  \ \Longrightarrow \ \alpha=\frac{\lambda_1}{\lambda_0+\lambda_1}.
	\end{align*}
	Since $\lambda_y>0$ (recall that $L^\varepsilon$ is irreducible if and only if $C_{y,1-y}$ has at least one positive entry for all $y \in \{0,1\}$) we have $\alpha\in(0,1)$ and therefore $\pi\in\curlP_+(\curlX)$. 
\end{proof}

\begin{lem}\label{lem:localEquilibrium}
	Let a sequence $\mu^\e\in \curlC([0,T];\curlP(\curlX))$ satisfy 
	\begin{align}
	\sup_{\e >0} \Big\{\I_{L^\e}(\mu^\e) + \RelEnt(\mu_0^\e\vert\pi^\e) \Bigr\}< \infty,
	\label{eq:locEquiBounds}
	\end{align}
	where $(\pi^\e)_{\e > 0}\subset\curlP(\curlX)$ is a sequence of stationary measures of $L^\e$ converging to $\pi\in \curlP_+(\curlX)$ as $\e \rightarrow 0$.
	Then there $\hat{\mu} \in \curlC([0,T];\curlP(\curlY))$ such that for almost every $t \in [0,T]$,
	\begin{align}
	\forall y\in\curlY, \ A_\curlZ \subset \curlZ, \ \mu_t(\{y\} \times A_\curlZ ) = \hat{\mu}_t({y}) \pi_y(A_\curlZ).
	\label{eq:CGLimitDecomposition}
	\end{align}
	Here $\mu$ is the limit of $(\mu^\e)_{\e>0}$ (see Lemma \ref{lem:compactness}) and for each $y\in\curlY$, $\pi_y \in \curlP(\curlZ)$ is the stationary measure corresponding to $Q_y$.
	Furthermore $\xi_\# \mu^\e \rightarrow \hat{\mu}$ in $\curlC([0,T];\curlP(\curlY))$ uniformly in time.
\end{lem}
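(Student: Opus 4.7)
The plan is to apply the FIR inequality (Theorem~\ref{thm:FIR}) with the choice $\rho = \pi^\e$, the stationary solution of $\partial_t \rho = (L^\e)^T\rho$. Since the generator splits as $L^\e = \tfrac{1}{\e}Q + C$ and the definition~\eqref{eq:genRFIc} of $\RF^\lambda_L$ is linear in $L$ (the weights $\psi_\lambda$ depend only on $\mu,\rho$), we have
\begin{align*}
\RF^\lambda_{L^\e}(\mu\vert\pi^\e) = \frac{1}{\e}\RF^\lambda_Q(\mu\vert\pi^\e) + \RF^\lambda_C(\mu\vert\pi^\e).
\end{align*}
Using the non-negativity of $\RF^\lambda_C$ (Lemma~\ref{l:props-gRFI}(i)) together with the FIR inequality and the uniform bound \eqref{eq:locEquiBounds}, we obtain an $\e$-independent constant $M$ such that
\begin{align*}
\int_0^T \RF^\lambda_Q(\mu_t^\e \vert \pi^\e)\,dt \leq \e M \xrightarrow{\e \to 0} 0.
\end{align*}

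Next I would pass this vanishing bound to the limit. From Lemma~\ref{lem:stationary-measures}, $\pi^\e \to \pi \in \curlP_+(\curlX)$, and from Lemma~\ref{lem:compactness} the limit $\mu$ disintegrates as $\mu = \int_0^T \mu_t\,dt$ with $\mu_t \in \curlP(\curlX)$ for a.e.\ $t$ (using that the projection of $\mu^\e$ onto $[0,T]$ is Lebesgue for every $\e$). Since $\curlX$ is finite, narrow convergence $\mu^\e \to \mu$ in $\curlM([0,T]\times\curlX)$ is equivalent to weak-$*$ convergence of the density sections $t\mapsto \mu_t^\e(x)$ in $L^\infty([0,T])$ for each $x$. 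Using convexity of $\mu\mapsto \RF^\lambda_Q(\mu\vert\pi)$ (see the Remark at the end of Section~\ref{subsec:propetiesGenRFI}) together with the joint lower-semicontinuity from Lemma~\ref{l:props-gRFI}(ii), I can apply Fatou's lemma in time to conclude
\begin{align*}
\int_0^T \RF^\lambda_Q(\mu_t \vert \pi)\,dt \leq \liminf_{\e \to 0}\int_0^T \RF^\lambda_Q(\mu_t^\e\vert \pi^\e)\,dt = 0,
\end{align*}
so that $\RF^\lambda_Q(\mu_t\vert\pi) = 0$ for almost every $t \in [0,T]$.

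Finally, Lemma~\ref{l:R=0} applied with the generator $Q$ yields the structural description. Because $Q$ acts blockwise and each $Q_y$ is irreducible, the graph attached to $Q$ on $\supp(\pi) = \curlX$ has exactly the two connected components $\{0\}\times\curlZ$ and $\{1\}\times\curlZ$. Hence for a.e.\ $t$ there exist $a_0(t),a_1(t)\geq 0$ with $\mu_t(y,z) = a_y(t)\pi(y,z)$ for every $(y,z)$. Writing $\pi=(\alpha\pi_0,(1-\alpha)\pi_1)$ as in Lemma~\ref{lem:stationary-measures} and setting $\hat\mu_t(0):=a_0(t)\alpha$, $\hat\mu_t(1):=a_1(t)(1-\alpha)$, the desired factorisation $\mu_t(\{y\}\times A_\curlZ) = \hat\mu_t(y)\pi_y(A_\curlZ)$ follows. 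Summing over $z\in\curlZ$ shows $\hat\mu_t = \xi_\#\mu_t$, so the uniform-in-time convergence $\xi_\#\mu^\e \to \hat\mu$ in $\curlC([0,T];\curlP(\curlY))$ is precisely the content of Lemma~\ref{lem:compactness}(ii); in particular $\hat\mu\in\curlC([0,T];\curlP(\curlY))$. The main obstacle I anticipate is the lower-semicontinuity/Fatou step: transferring the vanishing bound on $\int_0^T \RF^\lambda_Q(\mu_t^\e\vert\pi^\e)\,dt$ to the limit requires a careful use of the joint convexity and lower-semicontinuity of the generalised Fisher Information, together with the disintegration furnished by Lemma~\ref{lem:compactness}; the remaining steps are largely algebraic, relying on the block structure of $Q$ and the positivity of $\pi$.
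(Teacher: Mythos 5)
Your proposal is correct and follows essentially the same route as the paper: FIR inequality with $\rho=\pi^\e$, linearity of $\RF^\lambda$ in the generator to isolate a vanishing bound on $\int_0^T\RF^\lambda_Q(\mu^\e_t\vert\pi^\e)\,dt$, lower semicontinuity to pass to the limit, and Lemma~\ref{l:R=0} to extract the local-equilibrium structure. The only (harmless) variation is that you apply Lemma~\ref{l:R=0} directly to $Q$ via its two connected components $\{y\}\times\curlZ$, whereas the paper first disintegrates $\RF^\lambda_Q(\mu_t\vert\pi)=\sum_y(\xi_\#\mu_t)(y)\,\RF^\lambda_{Q_y}(\mu_t(\cdot\vert y)\vert\pi_y)$ and applies it blockwise; your explicit appeal to convexity in the Fatou/l.s.c.\ step is, if anything, more careful than the paper's.
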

\begin{proof}

	Using \eqref{eq:locEquiBounds} and the FIR inequality in Theorem \ref{thm:FIR}, we find
	\begin{align*}
	\RelEnt(\mu^\e_T \vert \pi^\e) + \int_0^T \RF_{L^\e}^\lambda(\mu^\e_t \vert \pi^\e) \,dt \leq \I_{L^\e}(\mu^\e) + \RelEnt(\mu^\e_0 \vert \pi^\e) \leq M \quad \Longrightarrow \quad \int_0^T \RF_{L^\e}^\lambda(\mu^\e_t \vert \pi^\e) \,dt \leq M,
	\end{align*}
	for some constant $M < \infty$ independent of $\e$.
	Recall that $L^\e = \e^{-1} Q + C$. Due to the linearity of $\RF_L^\lambda$ with respect to $L$, we find that
	\begin{align*}
	\e^{-1} \int_0^T \RF_Q^\lambda(\mu_t^\e \vert \pi^\e) \,dt + \int_0^T \RF_C^\lambda(\mu_t^\e \vert \pi^\e) \,dt \leq M.
	\end{align*}
	Multiplying with $\e$ and letting $\e \rightarrow 0$ we find 
	\begin{align*}
	\liminf_{\e \rightarrow 0} \int_0^T \RF^\lambda_Q(\mu_t^\e \vert \pi^\e) \,dt \leq 0.
	\end{align*}
	Using the non-negativity and lower-semicontinuity property of the generalized relative Fisher Information (cf.~Lemma~\ref{l:props-gRFI}), together with the Borel-measurability of the non-negative functions $t\mapsto \RF_Q(\mu_t^\e \vert \pi^\e)$, we obtain from Fatou's lemma that
	\begin{align}
	\RF_Q^\lambda(\mu_t \vert \pi) = 0\qquad\text{for almost every $t\in(0,T)$}.
	\label{eq:locEqRelRFIvanishes}
	\end{align}
	
	In what follows, for $y\in \curlY$ we use $\mu_t(\cdot|y)\in\curlP(\curlZ)$ for the family of conditional measures corresponding to $\mu_t$, i.e.\ we write $\mu_t(y,z)=\mu_t(z\vert y)(\xi_{\#}\mu_t)(y)$. We show that $\RF_Q^\lambda(\mu_t \vert \pi) = 0$ if and only if $\mu_t(z \vert y) = \pi_y(z)$ for any $x = (y,z) \in \curlX$ with $(\xi_\# \mu_t)(y) > 0$. Using the representation \eqref{eq:genRFIb} and by disintegration we find 
	\begin{align*}
	\RF_Q^\lambda(\mu_t \vert \pi) &= \sum_{x,x' \in \curlX} Q(x,x') \left[\mu_t(x') \dfrac{\pi(x)}{\pi(x')} - \dfrac{1}{\lambda}\mu_t(x)^{1-\lambda} \mu_t(x')^\lambda\left(\dfrac{\pi(x)}{\pi(x')}\right)^\lambda\right] \\
	&= \sum_{y \in \curlY} \sum_{z,z' \in \curlZ} (\xi_\# \mu_t)(y) Q_y(z,z') \left[\mu_t(z' \vert y) \dfrac{\pi_y(z)}{\pi_y(z')} - \dfrac{1}{\lambda}\mu_t(z \vert y)^{1-\lambda} \mu_t(z' \vert y)^\lambda\left(\dfrac{\pi_y(z)}{\pi_y(z')}\right)^\lambda\right] \\
	&= \sum_{y \in \curlY} (\xi_\#\mu_t)(y)\RF_{Q_y}^\lambda(\mu_t(\cdot \vert y) \vert \pi_y). 
	\end{align*}
	Here, we used that the conditional measure $\pi(\cdot \vert y) \in \curlP(\curlZ)$ is the stationary measure $\pi_y$ of $Q_y$ since $(\xi_\# \pi)(y) > 0$.
	Using~\eqref{eq:locEqRelRFIvanishes} along with the the irreducibility of $Q_y$, the fact that $\pi_y \in \curlP_+(\curlZ)$ and  Lemma \ref{l:R=0} we find $\mu_t(z|y)=\pi_y(z)$ for any $(y,z)\in\curlX$ with $(\xi_\# \mu_t)(y) > 0$, and therefore~\eqref{eq:CGLimitDecomposition} follows since it holds trivially whenever $(\xi_\# \mu_t)(y) = 0$. By the convergence properties of $\xi_\#\mu^\e$ given in Lemma~\ref{lem:compactness}, we find $\hat\mu:=\xi_\#\mu\in \curlC([0,T];\curlP(\curlY))$ such that $\xi_\#\mu^\e\rightarrow \hat\mu$ uniformly in time.
\end{proof}

\subsection{Liminf inequality}\label{sec:CGLimInf}
As discussed in Section~\ref{subsec:variationalFrameworkCG}, the final step is to prove a liminf inequality which will also provide us with the limit dynamics. We prove this result in the next theorem. 

We define the (limiting) functional $\I_L:\curlC([0,T];\curlP(\curlY))\rightarrow\R$ by
\begin{equation}\label{def:LimRatFun}
\begin{aligned}
\I_L(\hat{\mu}) := \sup_{g \in \curlC^1([0,T];\Cb(\curlY))} & \left\{ \sum_{y \in \curlY} g_T(y) \hat{\mu}_T(y) - \sum_{y \in \curlY} g_0(y) \hat{\mu}_0(y) \right.\\
&- \left.\int_0^T \Biggl[\sum_{y \in \curlY} \partial_t g_t(y) \hat{\mu}_t(y) + \sum_{y,y' \in \curlY}\hat{\mu}_t(y) L(y,y') \left(e^{\nabla g_t(y',y)} - 1\right)\Biggr] \,dt \right\}, 
\end{aligned}
\end{equation}
with the (limiting) generator $L$  defined as
\begin{align}\label{def:Lamb}
L := \left(\begin{array}{cc}
- \lambda_0 & \lambda_0 \\
\lambda_1 & -\lambda_1
\end{array}\right), \quad \lambda_y := \sum_{z,z' \in \curlZ} \pi_y(z) C_{y,1-y}(z,z').
\end{align}
Here  $\pi_y\in\curlP_+(\curlZ)$ is the stationary measure of $Q_y$ (recall Lemma~\ref{lem:localEquilibrium}). 
Since $g=0$ is admissible, $\I_L\geq 0$. Furthermore we have the equivalence 
\begin{align}\label{eq:LimitEvo}
\I_L(\hat\mu)=0 \ \Longleftrightarrow \ \partial_t\hat\mu=L^T\hat\mu.
\end{align}

\begin{lem}\label{lem:liminf}
	Under the same assumptions of Lemma~\ref{lem:localEquilibrium} we assume that $\mu^\e \rightarrow \mu$ narrowly in  $\curlM([0,T] \times \curlX)$ and $\xi_{\#}\mu^\e \rightarrow \hat\mu$ in $\curlC([0,T];\curlP(\curlY))$ (recall Lemma~\ref{lem:compactness}). Then 
	\begin{align*}
	\liminf_{\e \rightarrow 0} \I_{L^\e}(\mu^\e) \geq \I_L(\hat{\mu}).
	\end{align*}
\end{lem}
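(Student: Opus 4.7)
The plan is to follow the scheme sketched in~\eqref{eq:formalLiminf}. Starting from the dual representation~\eqref{eq:rateFunctionalTheorem} of $\I_{L^\e}$, we restrict the admissible test functions $f\in\curlC^1([0,T];\Cb(\curlX))$ to those of the form $f = g\circ\xi$ with $g\in\curlC^1([0,T];\Cb(\curlY))$. This yields, for every such $g$ and every $\e>0$, the lower bound
\begin{align*}
\I_{L^\e}(\mu^\e) \geq \sum_{y\in\curlY} g_T(y)(\xi_\#\mu_T^\e)(y) - \sum_{y\in\curlY} g_0(y)(\xi_\#\mu_0^\e)(y) - \int_0^T\!\!\Bigl(\sum_{y\in\curlY} \partial_t g_t(y)(\xi_\#\mu_t^\e)(y) + \curlH^\e(\mu_t^\e, g_t\circ\xi)\Bigr) dt.
\end{align*}
The decisive simplification is that $f=g\circ\xi$ annihilates the singular $\e^{-1}Q$ contribution to the Hamiltonian: since $Q$ keeps the $y$-component fixed and $f(y,z)=g(y)$ is independent of $z$, every $Q$-driven jump contributes $e^{g(y)-g(y)}-1=0$. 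What remains is $\e$-independent and bounded continuous on $[0,T]\times\curlX$, namely
\begin{align*}
\curlH^\e(\mu_t^\e, g_t\circ\xi) = \sum_{(y,z)\in\curlX} \mu_t^\e(y,z)\, h_t(y,z),
\qquad h_t(y,z) := \bigl[e^{\nabla g_t(1-y,y)}-1\bigr]\sum_{z'\in\curlZ} C_{y,1-y}(z,z').
\end{align*}

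For the passage to the limit with $g$ fixed, the three $\xi_\#$-terms only involve $\xi_\#\mu^\e$, which converges to $\hat\mu$ in $\curlC([0,T];\curlP(\curlY))$ uniformly in time by Lemma~\ref{lem:compactness}, and they therefore pass to the limit immediately. The Hamiltonian integral can be written as $\int_{[0,T]\times\curlX} h\, d\mu^\e$; by narrow convergence $\mu^\e\to\mu$ in $\curlM([0,T]\times\curlX)$ against the bounded continuous test function $h$, we obtain
\begin{align*}
\lim_{\e\to0}\int_0^T\curlH^\e(\mu_t^\e, g_t\circ\xi)\, dt = \int_0^T\sum_{(y,z)\in\curlX} \mu_t(y,z)\, h_t(y,z)\, dt.
\end{align*}
Invoking the local-equilibrium factorisation $\mu_t(y,z)=\hat\mu_t(y)\pi_y(z)$ from Lemma~\ref{lem:localEquilibrium} and summing over $z$ produces the rate $\sum_{z,z'}\pi_y(z)C_{y,1-y}(z,z')=\lambda_y$, so the limit equals $\int_0^T\sum_{y\in\curlY}\hat\mu_t(y)\lambda_y[e^{\nabla g_t(1-y,y)}-1]\, dt$, which is exactly $\int_0^T\sum_{y,y'\in\curlY}\hat\mu_t(y)L(y,y')[e^{\nabla g_t(y',y)}-1]\, dt$ for $L$ defined by~\eqref{def:Lamb}.

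Combining these limits gives, for every fixed $g\in\curlC^1([0,T];\Cb(\curlY))$, the inequality $\liminf_{\e\to0}\I_{L^\e}(\mu^\e)\ge\LDJ_L(\hat\mu,g)$, where $\LDJ_L(\hat\mu,g)$ denotes the expression inside the supremum in~\eqref{def:LimRatFun}. Since the left-hand side is independent of $g$, taking the supremum over $g$ on the right yields the desired $\liminf_{\e\to0}\I_{L^\e}(\mu^\e)\ge\I_L(\hat\mu)$. The main obstacle is controlling the singular $\e^{-1}Q$ term; it is defused in two complementary ways built into the preceding machinery: the algebraic cancellation afforded by restricting to test functions lifted from $\curlY$ via $\xi$, and the local-equilibrium lemma (itself a consequence of the FIR inequality in Theorem~\ref{thm:FIR}) which provides the product-measure representation of $\mu_t$ needed to identify the emergent effective rates $\lambda_y$.
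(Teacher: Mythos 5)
Your proposal is correct and follows essentially the same route as the paper's proof: restrict the dual representation to test functions $f=g\circ\xi$ (which kills the singular $\e^{-1}Q$ part of the Hamiltonian), pass to the limit using the uniform convergence of $\xi_\#\mu^\e$ for the boundary and $\partial_t g$ terms and the narrow convergence of $\mu^\e$ for the remaining Hamiltonian term, invoke the local-equilibrium factorisation of Lemma~\ref{lem:localEquilibrium} to identify the effective rates $\lambda_y$, and finally take the supremum over $g$. No gaps.
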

\begin{proof}
	We write the rate functional $\I_{L^\e}:\curlC([0,T];\curlP(\curlX))\rightarrow\R$ (defined in~\eqref{eq:rateFunctionalTheorem}) as
	\begin{align*}
	\I_{L^\e}(\mu^\e) = \sup_{f \in \curlC^1([0,T];\Cb(\curlX))} \mathcal{J}^\e(\mu^\e, f),
	\end{align*}	
	with
	\begin{align*}
	\curlJ^\e(\mu^\e, f) := \langle f_t,\mu_T^\e\rangle - \langle f_0, \mu_0^\e\rangle - \int_0^T \sum_{x,x' \in \curlX} \mu_t (x)\left(\partial_t f_t(x) + L^\e(x,x') \left[e^{\nabla f(x',x)} -1\right]\right) \,dt.
	\end{align*}
	Using $\curlA:=\{f=g\circ\xi:\, g\in  \curlC^1([0,T];\Cb(\curlY))\}$ we have
	\begin{align*}
	\I_{L^\e}(\mu^\e) \geq \sup_{f \in \curlA} \mathcal{J}^\e(\mu^\e, f),
	\end{align*}
	where
	\begin{align}
	\mathcal{J}^\e(\mu^\e, g\circ\xi)
	&= \langle g_T\circ\xi, \mu_T^\e\rangle - \langle g_0\circ\xi,\mu_0^\e\rangle -\int_0^T \langle\partial_t(g_t\circ\xi),\mu_t^\e\rangle\,dt\nonumber\\
	&\hspace*{4em}-\int_0^T\sum_{(y,z) \in \curlY\times \curlZ } \mu^\e_t((y,z)) \sum_{z' \in \curlZ}C_{y, 1-y}(z,z')\left(e^{-\nabla g_t(y,1-y)}-1\right).\label{eq:TestFun-LimInf}
	\end{align}
	We now show that~\eqref{eq:TestFun-LimInf} converges to~\eqref{def:LimRatFun} term by term. Since $\xi_{\#} \mu_t^\e \rightarrow \hat{\mu}_t$ uniformly in $t \in [0,T]$, for the first three terms in the right hand side of~\eqref{eq:TestFun-LimInf} we find
	\[
	\langle g_T, \xi_\#\mu_T^\e\rangle - \langle g_0,\xi_\#\mu_0^\e\rangle -\int_0^T \langle\partial_tg_t,\xi_\#\mu_t^\e\rangle\,dt \quad \xrightarrow{\e \rightarrow 0} \quad\langle g_T, \xi_\#\mu_T\rangle - \langle g_0,\xi_\#\mu_0\rangle -\int_0^T \langle\partial_tg_t,\xi_\#\mu_t\rangle\,dt.
	\]
	Using Lemma~\ref{lem:localEquilibrium} for the final term in~\eqref{eq:TestFun-LimInf} yields
	\begin{align*}
	&\int_0^T  \sum_{(y,z) \in \curlY\times\curlZ} \mu^\e_t((y,z)) \left(e^{-\nabla g_t(y,1-y)}-1\right) \sum_{z' \in \curlZ} C_{y,1-y}(z,z') \,dt \\
	&\hspace*{6em}\xrightarrow{\e \rightarrow 0}\quad \int_0^T \sum_{(y,z) \in \curlY\times\curlZ} \pi_{y}(z) \hat{\mu}_t(y) \left(e^{-\nabla g(y,1-y)}-1\right) \sum_{z' \in \curlZ} C_{y,1-y}(z,z') \,dt \\
	&\hspace*{8.5em}= \int_0^T \sum_{y \in \curlY} \hat{\mu}_t(y) \left(e^{-\nabla g(y,1-y)}-1\right) \lambda_{y} \,dt.
	\end{align*} 
	where $\lambda_y$ is defined in~\eqref{def:Lamb}. Altogether, we obtain
	\begin{align*}
	\liminf_{\e \rightarrow 0}\I_{L^\e}(\mu^\e) &\geq \liminf_{\e \rightarrow 0} \mathcal{J}^\e(\mu^\e, g\circ \xi) \\
	&= \langle g_T, \hat\mu_T\rangle - \langle g_0,\hat\mu_0\rangle -\int_0^T \langle\partial_tg_t,\hat\mu_t\rangle + \sum_{y \in \curlY} \hat{\mu}_t(y) \left(e^{-\nabla g(y,1-y)}-1\right) \lambda_{y}\,dt
	\end{align*}
	for every $g\in \curlC^1([0,T];\Cb(\curlY))$. Taking the supremum over such functions concludes the proof.
\end{proof}

\begin{rem}[Limiting behaviour of solutions]\label{rem:Sol-BoundRF} 
	So far, in all the steps we have assumed that the sequence $\mu^\e$ are \emph{approximate solutions} in the sense that they satisfy $\sup_{\e >0}\I_{L^\e}(\mu^\e)<\infty$. The case when $\mu^\e$ is a sequence of solutions to the forward Kolmogorov equation~\eqref{eq:KolEq} is a special case of our analysis, which corresponds to the choice $\I_{L^\e}(\mu^\e)=0$. Lemma~\ref{lem:liminf} implies that the limiting evolution for a sequence of solutions is given by~\eqref{eq:LimitEvo}. Theorem~\ref{thm:Intro-CG-Sol} summarises the results  for a sequence of solutions.
\end{rem}

\section{Conclusion and discussion}\label{sec:conclusionDiscussion}
In this paper we construct a \emph{generalised relative Fisher Information} in the context of Markov jump processes on possibly countable discrete state space. This generalised Fisher Information has various favourable properties, and connects naturally to the relative entropy and the large deviation rate functional. We then use these connections to solve a coarse-graining problem in the context of Markov jump processes. 

We now discuss sme open questions and connected problems. 

\medskip

\textbf{Coarse-graining in more general setting.} 
As mentioned in the introduction, our coarse-graining example was already discussed using martingale techniques in \cite{lahbabiLegoll13}. Related ideas have also been discussed in~\cite[Chapter 16]{PavliotisStuart08}. We now discuss whether more general settings can also be treated by our method.
For that we distinguish two cases, finite state-spaces and countable state-spaces.
In the case of finite state-spaces, we expect that our proofs straightforwardly generalise to the case there are more than two macro-states which each have a different (finite) number of macro-states, i.e.\ $\curlY$ is an arbitrary finite set and $\curlX = \cup_{y \in \curlY} \{y\} \times \curlZ_y$.

In contrast the case of infinite state-spaces provides more difficulties.
A particular one is that the compactness argument in Lemma~\ref{lem:compactness} via Prokhorov's theorem relies on the the fact that the state-space is finite and thus compact.
In \cite{duongLamaczPeletierSharma17} this is solved by using the FIR inequality to obtain bounds on the free energy which are in turn used to obtain compactness results.
However, it is an open question, whether such a strategy is applicable in the discrete case.

\medskip

\textbf{Other stochastic processes.}
The approach to the FIR inequality presented in this work is rather general, which we now formally outline. 
Let $X$ be a smooth manifold with tangent bundle $TX$ and $\calL : X \times TX \rightarrow \R$ a Lagrangian, or more generally an $L$-function~\cite{mielkePeletierRenger14}, i.e.\ $\calL$ is nonnegative, convex in its second argument and induces an evolution equation via
\begin{align*}
\calL(x,s) = 0 \Longleftrightarrow  s = \curlA(x).
\end{align*}
Note that we do not assume that $\calL$ originates from a large deviations principle.
Furthermore, suppose that there is a smooth Lyapunov function $\calF : X \rightarrow \R$ connected to the evolution equation $\partial_t x = \curlA(x)$.

We now construct a relative entropy-type functional comparing two elements from $X$ by using the Bregman divergence of $\calF$,
\begin{align*}
\calF(x \vert y) := \calF(x) - \calF(y) - \scalarprod{d\calF(y)}{x-y},
\end{align*}
where $d\calF$ is the Fr{\'e}chet derivative of $\calF$.
Then, we can formally define the generalised relative Fisher Information in this case as
\begin{align*}
\RF^\lambda_\curlA(x \vert y) := \scalarprod{d^2 \calF(y) (\curlA(y))}{x-y} - \dfrac{1}{\lambda} \curlH(x,\lambda(d\calF(x) - d\calF(y))),
\end{align*}
where $\curlH(x,\cdot)$ is the Legendre transform of $\curlL(x,\cdot)$ for fixed $x \in X$.
By construction, these functionals satisfy the FIR-type inequality
\begin{align*}
\calF(x_T \vert y_T) - \calF(x_0 \vert y_0) + \int_0^T \RF_\curlA^\lambda(x_t \vert y_t) \,dt \leq \dfrac{1}{\lambda} \int_0^T \curlL(x_t, \partial_t x_t) \,dt,
\end{align*}
with $y :[0,T] \rightarrow X$ satisfying $\partial_t y = \curlA(y)$.
We still expect that $\RF_\curlA^\lambda$ converges for $\lambda \rightarrow 0$ to the classical relative Fisher Information $\RF_\curlA$, as motivated on page~\pageref{motivation:RFI-convergence}.
However, whether $\RF_\curlA^\lambda$ is also a non-negative functional is an open question.
We suspect that the Lagrangian and the Lyapunov function have to be connected in some appropriate sense for this to hold. 
One example of such a connection would be when both originate from a large deviations principle.

This also related to the important question, `How to construct Lypanunov functions?'.
There are, in principle, multiple approaches to do this.
For example, a specific choice can be motivated via a gradient flow result or via a large deviations principle.
In the case discussed in this work, both methods are valid.
While the fact that the relative entropy can be obtained via a large deviations principle is well known, gradient flow results for discrete state spaces are relatively new, see e.g.~\cite{maas11}. Further results for both these approaches also exist for certain nonlinear systems, see e.g.~\cite{kraaij16,erbarFathiLaschosSchlichting16}. However it is not clear if and how these are connected and whether they can be used in the construction of a generalised relative Fisher Information as described above.

\medskip

\textbf{Quantification of coarse-graining error.} The FIR inequality has been  successfully used to quantify error in relative entropy between two different forward Kolmogorov equations in the context of diffusion equations. Similar questions can be asked in the Markov jump process context, for instance to prove rates of convergence -- note that in this paper we only prove qualitative convergence. However the role of the generalised Fisher Information and the FIR inequality in proving such quantitative estimates is an open problem. To do this, we expect that the right object to consider is not the FIR inequality but a related result inspired by~\cite{yau91} (see~\cite[Chapter 8]{hilder17} for preliminary results). 

\section*{Acknowledgments}
BH was supported by the German Research Foundation (DFG) within the Cluster of Excellence in Simulation Technology (EXC 310/2) at the University of Stuttgart.
MAP and US kindly acknowledge support from the Nederlandse Organisatie voor Wetenschappelijk Onderzoek (NWO) VICI grant 639.033.008.
OT was funded by the NWO Vidi grant number 016.Vidi.189.102.

\begin{appendices}
	
	\section{Banach-space-valued functions}
	\label{app:Bochner}
	In this appendix we briefly summarize some properties of functions from an interval $[0,T]$ into the Banach space $\ell^1(\curlX)$; we follow the treatment in~\cite{HytonenVanNeervenVeraarWeis16-I} and use their terminology. While in this paper the set $\curlX$ is assumed to be either finite or countable, in this appendix we assume that $\curlX$ is countable, and to simplify notation we assume that $\curlX = \N$; the results for the finite case are all classical. 
	
	\medskip
	
	First we define the space $A\curlC([0,T];\curlP(\curlX))$ of absolutely continuous trajectories in the space of probability measures. This is the space of curves $\mu:[0,T]\to \curlP(\curlX)$ that satisfy
	\begin{quote}
		For every $\e>0$, there exists $\delta>0$ such that 
		for any finite set of disjoint intervals $([a_k,b_k])_{k\in I}\subset [0,T]$ with 
		$\sum_{k\in I} |b_k-a_k| < \delta$ we have 
		$\sum_{k\in I} \|\mu(b_k)-\mu(a_k)\|_{\ell^1(\curlX)} < \e$.
	\end{quote}
	Note that the metric used in the definition above is the $\ell^1$-norm, which is consistent because strong and weak continuity coincide. 
	
	\medskip
	
	Next we turn  to Bochner spaces. We refer to \cite{HytonenVanNeervenVeraarWeis16-I} for the concepts of measurability and Bochner integrability of a function $u:[0,T]\to\ell^1(\N)$.
	The Bochner space $L^1(0,T;\ell^1(\N))$ is defined as the space of equivalence classes of strongly Lebesgue-measurable functions with finite norm
	\[
	\|u\|_{L^1(0,T;\ell^1(\N))} := \int_0^T \|u(t)\|_{\ell^1(\N)}\, dt.
	\]
	The space $W^{1,1}(0,T;\ell^1(\N))$ is defined as the subset of $L^1(0,T;\ell^1(\N))$ of functions with weak derivatives in $L^1(0,T;\ell^1(\N))$. 		
	\begin{lem}
		\label{l:Bochner:1}
		Let $u:[0,T]\to\ell^1(\N)$; then $u\in A\curlC([0,T];\ell^1(\N))$ iff $u\in W^{1,1}(0,T;\ell(\N))$. In this case
		the derivative $\partial_t u(t)$ exists in the classical sense at almost all $t$, it is a.e.\ equal to the weak derivative of $u$, and we have
		\[
		u(\tau)-u(\sigma) = \int_\sigma^\tau \partial_tu(t)\, dt,\qquad\text{for all }0\leq \sigma\leq \tau\leq T,
		\]
		where the integral is in the sense of Bochner. 
	\end{lem}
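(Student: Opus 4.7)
The plan is to reduce everything to the scalar case by exploiting that $\ell^1(\N)$ has the Radon--Nikodym property, being a separable dual space (namely $(c_0)^\ast$); equivalently, to argue componentwise and assemble the coordinate derivatives into a Bochner-integrable $\ell^1$-valued derivative. I would treat the two implications separately.

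The direction $W^{1,1} \Rightarrow A\curlC$ is routine: if $u$ has weak derivative $v \in L^1(0,T;\ell^1(\N))$, a standard density argument (testing against step functions valued in $\ell^\infty = (\ell^1)^\ast$ and invoking the scalar fundamental theorem on each coordinate) yields the Bochner identity $u(\tau) - u(\sigma) = \int_\sigma^\tau v(t)\,dt$ for all $0 \leq \sigma \leq \tau \leq T$, after modification on a null set. Absolute continuity then follows from
\[
\sum_{k \in I} \|u(b_k)-u(a_k)\|_{\ell^1(\N)} \leq \int_{\bigcup_k [a_k,b_k]} \|v(t)\|_{\ell^1(\N)}\,dt
\]
combined with absolute continuity of the scalar indefinite integral $t \mapsto \int_0^t \|v(s)\|_{\ell^1(\N)}\,ds$.

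For the converse, I would work coordinatewise. Setting $u_n(t) := u(t)(n)$, the pointwise estimate $|u_n(b) - u_n(a)| \leq \|u(b)-u(a)\|_{\ell^1(\N)}$ transfers absolute continuity from $u$ to each scalar coordinate $u_n$, so $u_n'$ exists Lebesgue-a.e.\ and $u_n(t) - u_n(0) = \int_0^t u_n'(s)\,ds$. Defining the candidate derivative $v(t) := (u_n'(t))_{n\in\N}$, the scalar total-variation bound $\sum_{n=1}^N \int_0^T |u_n'(s)|\,ds = \sum_{n=1}^N V_0^T(u_n) \leq V_0^T(u) < \infty$ (where $V_0^T(u)$ is the $\ell^1$-variation, finite by absolute continuity) combined with monotone convergence yields $\int_0^T \|v(s)\|_{\ell^1(\N)}\,ds \leq V_0^T(u)$, so $v \in L^1(0,T;\ell^1(\N))$.

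The main obstacle is to upgrade the coordinatewise identity to the $\ell^1$-valued identity $u(t) - u(0) = \int_0^t v(s)\,ds$ and to promote the scalar a.e.\ differentiability of each $u_n$ to strong $\ell^1$-differentiability of $u$ at a.e.\ $t$. Summing the scalar identities over $n \leq N$ and sending $N \to \infty$ resolves the integral representation, using $v \in L^1(0,T;\ell^1(\N))$ to control the tails in $\ell^1$-norm via dominated convergence. The strong classical differentiability $\partial_t u(t) = v(t)$ for a.e.\ $t$ then follows from the Lebesgue differentiation theorem for Bochner integrals of strongly measurable, integrable functions with values in a separable Banach space, applied to $v$. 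Finally, $v$ coincides with the weak derivative of $u$ via the integration-by-parts characterisation, completing the equivalence.
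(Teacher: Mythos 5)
Your argument is correct, but it takes a genuinely different route from the paper's. The paper disposes of the lemma in two lines: $\ell^1(\N)$ is a separable dual space (of $c_0(\N)$), hence has the Radon--Nikodym property, and the equivalence $A\curlC = W^{1,1}$ together with the fundamental theorem of calculus is then quoted directly from Hyt\"onen--van~Neerven--Veraar--Weis (Thm.~2.5.12 and Prop.~2.5.9). You mention the RNP in passing but what you actually carry out is a self-contained coordinatewise proof that exploits the specific structure of $\ell^1(\N)$: the coordinate functionals transfer absolute continuity to each scalar component $u_n$, the refinement argument gives $\sum_{n\le N} V_0^T(u_n)\le V_0^T(u)<\infty$ so that the assembled derivative $v=(u_n')_n$ lies in $L^1(0,T;\ell^1(\N))$, the coordinatewise fundamental theorem upgrades to the Bochner identity because coordinate evaluations separate points of $\ell^1$, and the Lebesgue differentiation theorem for Bochner integrals recovers strong a.e.\ differentiability. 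The trade-off is clear: the paper's proof is shorter and works verbatim in any Banach space with the RNP, while yours is elementary and avoids the abstract machinery, at the cost of being tied to $\ell^1$ over a countable index set (it would not transfer to a general RNP space where no norming sequence of coordinates with an $\ell^1$-type norm is available). The only points worth spelling out if you were to write this up in full are the strong measurability of $v$ (obtained as the a.e.\ $\ell^1$-norm limit of its finitely supported truncations) and the refinement step behind $\sum_n V_0^T(u_n)\le V_0^T(u)$; both are routine.
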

	
	\begin{proof}
		The space $\ell^1(\N)$ is separable and is the dual of the space 
		\[
		c_0(\N)= \Bigl\{(u_n)_{n\in \N}\in \R^\N: \lim_{n\to\infty} u_n = 0 \Bigr\},
		\]
		equipped with the supremum norm. This implies that $\ell^1(\N)$ has the \emph{Radon-Nikodym property}~\cite[Th.~1.3.21]{HytonenVanNeervenVeraarWeis16-I}.
		The assertion then follows from~\cite[Th.~2.5.12 and Prop.~2.5.9]{HytonenVanNeervenVeraarWeis16-I}. 
	\end{proof}
	
	\medskip
	
	For the proof of Theorem~\ref{thm:FIR} we need a generalization of the chain rule to absolutely continuous functions with values in $\ell^1(\N)$. When $u\in A\curlC([0,T];\R)$ and $f\in \curlC^1(\R)$, the chain rule is standard and can be found e.g.\ in~\cite[Cor.~8.11]{Brezis11}; the extension to functions $f$ that are only Lipschitz is described informally in various places, such as~\cite[Remark~A.3]{ShargorodskyToland08}. The following lemma generalizes this extension to  compositions of the form $f(u(t),v(t))$ under special conditions on $f$: 
	\begin{lem}
		\label{l:two-dim-chain-rule}
		Let $A\subset \R^2$, and let $f:A\to\R$ be globally Lipschitz continuous and differentiable at each point of $A$.  Let $u,v\in A\curlC([0,T];\R)$ satisfy $(u(t),v(t))\in A$ for all $t$. Define $w(t) := f(u(t),v(t))$. Then $w$ is absolutely continuous, and the chain rule holds in the following sense. There exists a null set $N\subset [0,T]$ such that $w$, $u$, and $v$ are differentiable at each $t\in [0,T]\setminus N$, and such that 
		\begin{equation}
		\label{eq:chain-rule-R2}
		w'(t) = \partial_1 f(u(t),v(t))u'(t) +\partial_2 f(u(t),v(t))v'(t) \qquad \text{for  all }t\in[0,T]\setminus N.
		\end{equation}
	\end{lem}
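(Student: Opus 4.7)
My plan is to proceed in three steps: first establish absolute continuity of $w$, then identify a suitable null set, and finally verify the chain rule pointwise outside this null set.

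For the first step, I would use the global Lipschitz assumption on $f$ to write
\[
|w(t)-w(s)| = |f(u(t),v(t)) - f(u(s),v(s))| \le L\bigl(|u(t)-u(s)| + |v(t)-v(s)|\bigr),
\]
where $L$ is the Lipschitz constant of $f$ with respect to the $\ell^1$-norm on $\R^2$. Since $u$ and $v$ lie in $A\curlC([0,T];\R)$, for any $\e>0$ one can choose a single $\delta>0$ that works simultaneously for both, showing that $w\in A\curlC([0,T];\R)$. This in turn implies that $w$ is classically differentiable almost everywhere. I would then define $N$ to be the union of the three null sets on which $u$, $v$, and $w$ respectively fail to be differentiable.

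For the third and main step, fix $t\in[0,T]\setminus N$ and set $(x_0,y_0):=(u(t),v(t))$, $p(h):=u(t+h)-u(t)$, $q(h):=v(t+h)-v(t)$. Since $f$ is differentiable at $(x_0,y_0)\in A$, we may write
\[
f(x_0+p,y_0+q) - f(x_0,y_0) = \partial_1 f(x_0,y_0)\, p + \partial_2 f(x_0,y_0)\, q + r(p,q),
\]
with $r(p,q)/(|p|+|q|)\to 0$ as $(p,q)\to(0,0)$, and $r(0,0)=0$. Substituting and dividing by $h$ yields
\[
\frac{w(t+h)-w(t)}{h} = \partial_1 f(x_0,y_0)\,\frac{p(h)}{h} + \partial_2 f(x_0,y_0)\,\frac{q(h)}{h} + \frac{r(p(h),q(h))}{h}.
\]
The first two terms converge to the desired right-hand side of~\eqref{eq:chain-rule-R2}, using $u'(t)$ and $v'(t)$.

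The subtle point, and the only real obstacle, is the error term $r(p(h),q(h))/h$. Here I would argue as follows. Since $u$ and $v$ are differentiable at $t$, the ratio $(|p(h)|+|q(h)|)/|h|$ stays bounded by some constant $M$ as $h\to0$. If $(p(h),q(h))\ne(0,0)$, then
\[
\left|\frac{r(p(h),q(h))}{h}\right| \le \frac{|r(p(h),q(h))|}{|p(h)|+|q(h)|}\cdot\frac{|p(h)|+|q(h)|}{|h|},
\]
and the first factor vanishes as $h\to0$ by differentiability of $f$ (using continuity of $u$ and $v$ to ensure $(p(h),q(h))\to(0,0)$), while the second remains bounded by $M$; if $(p(h),q(h))=(0,0)$ the term is simply zero. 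Hence the error tends to zero, and passing to the limit $h\to 0$ gives~\eqref{eq:chain-rule-R2} at every $t\in[0,T]\setminus N$, completing the proof.
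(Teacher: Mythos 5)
Your proposal is correct and follows the same route as the paper: absolute continuity of $w$ from the global Lipschitz bound, removal of the union of the three exceptional null sets, and then the pointwise chain rule at each remaining $t$ via differentiability of $f$ at $(u(t),v(t))$. The paper simply states that the identity ``follows from the classical chain rule'' at such points; your careful treatment of the remainder term $r(p,q)/h$ (splitting off the bounded factor $(|p|+|q|)/|h|$ and handling the case $(p,q)=(0,0)$ separately) is exactly the argument being invoked there.
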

	
	\begin{proof}
		First note that by the Lipschitz continuity of $f$, $w$ is absolutely continuous. To prove the chain rule~\eqref{eq:chain-rule-R2}, we restrict ourselves to the set of $t$ for which $u$, $v$, and $w$ each are differentiable; the remainder $N$ of $[0,T]$ is a null set. Consider such a $t\in [0,T]\setminus N$; since $(u(t),v(t))\in A$,  $f$ is differentiable at $(u(t),v(t))$, and therefore~\eqref{eq:chain-rule-R2} follows from the classical chain rule. 
	\end{proof}
	
	We then use the previous lemma to prove the chain rule for two nonnegative $\ell^1$-valued functions. 
	\begin{lem}
		\label{l:chain-rule-ell1}
		As in Lemma~\ref{l:two-dim-chain-rule}, let $f:A\to\R$ be globally Lipschitz continuous and differentiable at each point of $A$. Let $u,v\in A\curlC([0,T];\ell^1(\N))$ satisfy $(u(t,x),v(t,x))\in A$ for all $t$ and $x$. Define the function 
		\[
		w(t,x) = f(u(t,x),v(t,x)) \qquad \text{for each } x\in \N \text{ and } t\in [0,T].
		\]
		Then $w \in A\curlC([0,T];\ell^1(\N))$ and 
		\begin{equation}
		\label{eq:chain-rule-ell1}
		\partial_t w(t,x) = \partial_1 f(u(t,x),v(t,x))\partial_t u(t,x) + \partial_2f(u(t,x),v(t,x))\partial_t v(t,x)\qquad \text{for a.e. }t\in[0,T] \text{ and all }x\in \N.
		\end{equation}
	\end{lem}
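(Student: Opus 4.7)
The plan is to reduce the $\ell^1(\N)$-valued chain rule to the one-dimensional chain rule of Lemma~\ref{l:two-dim-chain-rule}, applied pointwise in $x\in\N$, and then to use the fact that $\N$ is countable so that a countable union of null sets is still null.

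First I would prove absolute continuity of $w$. Let $L$ be a Lipschitz constant for $f$, so that $|f(a_1,a_2)-f(b_1,b_2)|\le L(|a_1-b_1|+|a_2-b_2|)$ on $A$. Summing over $x\in\N$ gives, for any $s,t\in[0,T]$,
\[
\|w(t)-w(s)\|_{\ell^1(\N)}\le L\bigl(\|u(t)-u(s)\|_{\ell^1(\N)}+\|v(t)-v(s)\|_{\ell^1(\N)}\bigr),
\]
which shows at once that $w(t)-w(s)\in\ell^1(\N)$ (so $w$ takes values in $\ell^1(\N)$, since by assumption $w(0)\in\ell^1(\N)$ in the applications we have in mind) and that $w$ inherits absolute continuity from $u$ and $v$.

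Next I would pass to the pointwise chain rule. For each fixed $x\in\N$, the evaluation map $\ell^1(\N)\ni\varphi\mapsto\varphi(x)\in\R$ is continuous, so $t\mapsto u(t,x)$ and $t\mapsto v(t,x)$ lie in $A\curlC([0,T];\R)$; the same holds for $t\mapsto w(t,x)$ by the previous step. Lemma~\ref{l:two-dim-chain-rule} then yields, for each $x$, a null set $N_x\subset[0,T]$ such that $u(\cdot,x)$, $v(\cdot,x)$, $w(\cdot,x)$ are differentiable at every $t\in[0,T]\setminus N_x$ and satisfy~\eqref{eq:chain-rule-R2} there.

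Finally I would assemble a single exceptional null set. By Lemma~\ref{l:Bochner:1} applied to $u$, $v$, $w$, there is a null set $N_0\subset[0,T]$ outside of which the strong $\ell^1$-derivatives $\partial_tu(t)$, $\partial_tv(t)$, $\partial_tw(t)$ all exist. Set $N:=N_0\cup\bigcup_{x\in\N}N_x$; this is a countable union of Lebesgue null sets, hence null. For $t\in[0,T]\setminus N$, strong $\ell^1$-convergence of the difference quotients implies coordinate-wise convergence, so the coordinate $[\partial_tu(t)](x)$ coincides with the classical derivative of $s\mapsto u(s,x)$ at $s=t$, and similarly for $v$ and $w$. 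Combining this identification with~\eqref{eq:chain-rule-R2} gives~\eqref{eq:chain-rule-ell1} simultaneously for all $x\in\N$ and a.e.\ $t\in[0,T]$. There is no real obstacle here; the only mild subtlety is the identification of pointwise-in-$x$ derivatives with the coordinates of the Bochner derivative, and the observation that countability of $\N$ is essential to collapse the null sets $N_x$ into a single null set of exceptional times.
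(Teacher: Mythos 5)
Your proof is correct and follows essentially the same route as the paper's: the Lipschitz bound on $f$ gives absolute continuity of $w$, Lemma~\ref{l:two-dim-chain-rule} is applied coordinatewise in $x$, and the countably many null sets $N_x$ are collected into a single null set. The identification of the coordinates of the Bochner derivative with the pointwise classical derivatives, which you spell out at the end, is precisely the remark the paper places just before its proof.
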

	Note that pointwise evaluation is a 
	continuous operation on $\ell^1(\N)$, and therefore commutes with time differentiation; this shows that there is no ambiguity in the notation $\partial_t w(t,x)$, since $[w'(t)](x) = d/dt \,[w(t,x)]$ for almost all $t$ and all $x$.
	
	\begin{proof}
		The absolute continuity of $w$ follows directly from the Lipschitz continuity of $f$. 
		To prove the chain rule~\eqref{eq:chain-rule-ell1}, 
		fix $x\in \N$ and observe that $t\mapsto u(t,x)$ and $t\mapsto v(t,x)$ are elements of $A\curlC([0,T];[0,\infty))$; therefore
		\[
		\partial_t[w(t,x)] = \partial_t [f(u(t,x),v(t,x))]\  \stackrel{\mathrm{Lemma~\ref{l:two-dim-chain-rule}}}=\ \partial_1 f(u(t,x),v(t,x))\partial_t u(t,x) + \partial_2f(u(t,x),v(t,x))\partial_t v(t,x),
		\]
		for all $x$ and all $t\in [0,T]\setminus N_x$ for some null set $N_x$. Defining the null set $N:=\cup_{x\in\N} N_x$ we find that this expression holds for all $x$ and all $t\in [0,T]\setminus N$, which proves the lemma.
	\end{proof}

	\section{Proof of Theorem~\ref{thm:LDP}}\label{App-ProofLDP}
	
	The large deviation result and the corresponding rate functional 
	(see~\eqref{eq:rateFunctional}) for Markov chains on a finite or countable state space have been 
	discussed in~\cite[Proposition 5.10]{kraaij18}. The main objective of Theorem~\ref{thm:LDP} is to give 
	a different characterisation of the rate functional which is more useful in the context of coarse-
	graining (discussed in Section~\ref{subsec:variationalFrameworkCG}). The proof is inspired by  
	techniques developed in~\cite[Section 4]{dawsonGaertner87}, where the authors study large deviation 
	principles in the context of weakly-interacting diffusions. 
	
	We define
	\begin{align}
	\tilde{\LDJ}_{s,t}(\mu, f) := \sum\limits_{x\in\curlX} f_t(x)\mu_t(x) - 
	\sum\limits_{x\in\curlX} f_s(x) \mu_s(x) - \int_s^t \sum\limits_{x\in\curlX} \partial_u f_u(x) \mu_u(x) 
	+ \curlH(\mu_u, f_u) \,du.
	\label{eq:rate_functional_without_supremum}
	\end{align}
	
	\begin{cor}\label{cor:restriction_of_rate_functional}
		Let $\mu \in \curlC([0,T];\curlP(\curlX))$, $\curlI \subset \N$ a finite index set and $
		[s_k,t_k] \subset [0,T]$, $k\in\curlI$ be a finite family of pairwise disjoint intervals.
		Then for any function $g = \sum_{k \in \curlI} \varphi_k \chi_{[s_k,t_k]}\in 
		L^\infty(0,T;\Cb(\curlX))$, with $\varphi_k \in \Cb(\curlX)$ and indicator function $
		\chi_I$ (on interval $I$), there exists a monotonically decreasing sequence $g^n \in \curlC^1([0,T];
		\Cb(\curlX))$ such that $\|g^{n} - g\|_{\Cb(\curlX)}\rightarrow 0$ pointwise almost 
		everywhere in $(0,T)$ as $n \rightarrow \infty$ and
		\begin{align*}
		\tilde{\LDJ}_{0,T}(\mu, g^n)\quad \xrightarrow{n \rightarrow \infty}\quad \sum_{k \in 
			\curlI} \tilde{\LDJ}_{s_k,t_k}(\mu,g),
		\end{align*}
		where $\tilde{\LDJ}_{s,t}$ is defined by \eqref{eq:rate_functional_without_supremum}.
	\end{cor}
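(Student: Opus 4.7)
My plan is to realize $g^n$ as a time-smoothing of the step function $g$, and then pass to the limit term by term in the expression~\eqref{eq:rate_functional_without_supremum}. First assume all intervals lie in $(0,T)$, and pick $\eta_n\downarrow 0$ small enough that the inflated intervals $[s_k-\eta_n, t_k+\eta_n]$ are pairwise disjoint and contained in $(0,T)$; this is possible since $\curlI$ is finite. For each $k$ choose $\chi_k^n\in \curlC^1([0,T];[0,1])$ with $\chi_k^n \equiv 1$ on $[s_k,t_k]$, $\chi_k^n\equiv 0$ outside $[s_k-\eta_n, t_k+\eta_n]$, and $\chi_k^{n+1}\le \chi_k^n$ pointwise in $t$. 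Setting
\[
g^n(t,x) := \sum_{k\in\curlI} \varphi_k(x)\,\chi_k^n(t),
\]
gives an element of $\curlC^1([0,T];\Cb(\curlX))$ with $g^n(t,\cdot) \to g(t,\cdot)$ in $\Cb(\curlX)$ for every $t\notin\{s_k,t_k\}_{k\in\curlI}$, and $\sup_{t,x}|g^n(t,x)|\le G := \sum_k\|\varphi_k\|_{\Cb(\curlX)}$ uniformly in $n$.

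Next I evaluate the four pieces of $\tilde{\LDJ}_{0,T}(\mu,g^n)$. For large $n$ the boundary terms $\langle g^n_T,\mu_T\rangle$ and $\langle g^n_0,\mu_0\rangle$ both vanish. Writing
\[
\int_0^T \langle \partial_t g^n_t,\mu_t\rangle\,dt = \sum_{k\in\curlI}\int_0^T\langle\varphi_k,\mu_t\rangle\,\partial_t\chi_k^n(t)\,dt,
\]
and noting that $\partial_t\chi_k^n$ is supported in $[s_k-\eta_n,s_k]\cup[t_k,t_k+\eta_n]$ with integrals $+1$ and $-1$ on those two pieces respectively, the continuity of $t\mapsto\langle\varphi_k,\mu_t\rangle$ (which follows from $\mu\in \curlC([0,T];\curlP(\curlX))$ together with Remark~\ref{rem:weak-strong-equivalence}) yields in the limit
\[
\int_0^T \langle \partial_t g^n_t,\mu_t\rangle\,dt \;\longrightarrow\; \sum_{k\in\curlI}\bigl(\langle\varphi_k,\mu_{s_k}\rangle - \langle\varphi_k,\mu_{t_k}\rangle\bigr).
\]

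For the Hamiltonian term, the explicit form~\eqref{eq:hamiltonian} together with \eqref{eq:conditionsGenerator1}--\eqref{eq:conditionsGenerator2} gives $|\curlH(\mu_t,g^n_t)|\le (e^{2G}+1)\sup_x|L(x,x)|$ uniformly in $n$ and $t$. Since $g^n_t \to g_t$ pointwise a.e.\ in $t$ and $\curlH(\mu_t,\cdot)$ is continuous on $\Cb(\curlX)$, dominated convergence together with $\curlH(\mu_t,0)=0$ gives
\[
\int_0^T \curlH(\mu_t, g^n_t)\,dt \;\longrightarrow\; \sum_{k\in\curlI}\int_{s_k}^{t_k}\curlH(\mu_t,\varphi_k)\,dt.
\]
Assembling both limits and using $\partial_t\varphi_k = 0$, the right-hand side matches $\sum_k\tilde{\LDJ}_{s_k,t_k}(\mu,g)$ exactly.

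The main obstacle is the edge case when some interval abuts the boundary, say $s_k=0$. Then $\chi_k^n$ cannot vanish at $t=0$; I instead take $\chi_k^n(0)=1$ and keep the ``ramp-up'' piece absent. This produces a boundary contribution $-\langle g^n_0,\mu_0\rangle = -\langle\varphi_k,\mu_{s_k}\rangle$ that is exactly the term that was previously generated by the $[s_k-\eta_n, s_k]$ portion of $\partial_t\chi_k^n$, while the remaining ``ramp-down'' piece near $t_k$ still contributes $+\langle\varphi_k,\mu_{t_k}\rangle$ with the correct sign; a symmetric modification at $t_k=T$ handles the other endpoint. Thus the algebra recovers $\tilde{\LDJ}_{s_k,t_k}(\mu,g)$ in this edge case as well, completing the proof.
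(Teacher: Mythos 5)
Your proof is correct and follows essentially the same route as the paper's: both replace the indicators $\chi_{[s_k,t_k]}$ by decreasing $\curlC^1$ approximations with pairwise disjoint supports and pass to the limit in $\tilde{\LDJ}_{0,T}$ term by term. The only differences are cosmetic --- you handle the time-derivative and boundary terms by a direct approximate-identity computation where the paper routes them through the auxiliary functionals $\curlF_{k,n}$ and a density argument, you use dominated convergence for the Hamiltonian term where the paper invokes monotone convergence (your choice is in fact the cleaner one, since the integrand there is not obviously monotone in $n$), and you spell out the boundary cases $s_k=0$, $t_k=T$ explicitly, which the paper's formalism absorbs automatically.
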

	\begin{proof}
		For every $k\in\curlI$ there exists a decreasing sequence $(h_{k,n})_{n \in \N} \subset 
		\curlC^1([0,T];\R)$ such that $h_{k,n}(t) \in [0,1]$ for every $t \in [0,T]$ and $h_{n,k}\rightarrow 
		\chi_{[s_k,t_k]}$ pointwise almost everywhere for $n \rightarrow \infty$.
		Furthermore, since there are only finitely many $k$ we can choose the $h_{n,k}$ such that 
		they have pairwise disjoint support for $n$ large enough.
		Finally, we assume that there exists a $C < \infty$ not depending on $n$ such that 
		\begin{align*}
		\sum_{k \in \curlI} \int_0^T \snorm{\partial_t h_{k,n}} \,dt \leq C.
		\end{align*}
		We define $g_t^n(x) := \sum_{k \in \curlI} \varphi_k(x) h_{k,n}(t) \in \curlC^1([0,T];
		\Cb(\curlX))$.
		This sequence is monotonically decreasing and satisfies $g^n \rightarrow g$ pointwise 
		almost everywhere for $n \rightarrow \infty$.
		
		Now, we recall that
		\begin{align}\label{eq:tildeJ}
		\tilde{\LDJ}_{0,T}(\mu, g^n) = \sum\limits_{x\in\curlX} g_T^n(x) \mu_T(x) - 
		\sum\limits_{x\in\curlX} g_0^n(x) \mu_0(x) - \int_0^T \sum\limits_{x\in\curlX} \partial_t g_t^n(x) 
		\mu_t(x) + \curlH(\mu_t, g^n_t) \,dt.
		\end{align}
		We first consider the asymptotic behaviour of $\int_0^T \curlH(\mu_t, g_t^n) \,dt$.
		Since $h_{k,n}$ have pairwise-disjoint support for large $n$, we find by the monotone convergence theorem
		\begin{align*}
		&\int_0^T \curlH(\mu_t, g^n_t) \,dt = \int_0^T \sum_{x,y \in \curlX} \mu_t(x) L(x,y) 
		\left[e^{\nabla g_t^n(y,x)} - 1\right] \,dt \\
		&= \sum_{k \in \curlI} \int_0^T \chi_{\operatorname{supp}(h_{k,n})} \sum_{x,y \in \curlX} 
		\mu_t(x) L(x,y) \left[e^{h_{k,n}(t)\nabla \varphi_k(y,x)} - 1\right] \,dt \\
		&\xrightarrow{n \rightarrow \infty} \sum_{k \in \curlI} \int_{s_k}^{t_k} \sum_{x,y \in 
			\curlX} \mu_t(x) L(x,y) \left[e^{\nabla \varphi_k(y,x)} - 1\right] \,dt = \sum_{k \in \curlI} 
		\int_{s_k}^{t_k} \curlH(\mu_t,g_t) \,dt.
		\end{align*}
		To study the first three terms on the right side of~\eqref{eq:tildeJ}, for any $\phi \in 
		\curlC^1([0,T];\R)$ we define 
		\begin{align*}
		\mathcal{F}_{k,n}(\phi) := h_{k,n}(T) \phi(T) - h_{k,n}(0) \phi(0) &- \int_0^T \partial_t 
		h_{k,n}(t) \phi(t) \,dt
		= \int_0^T h_{k,n}(t) \partial_t \phi(t) \,dt, \\
		\mathcal{F}_k(\phi) := \phi(t_k) - \phi(s_k) = \int_{s_k}^{t_k} \partial_t \phi(t) \,dt,
		\end{align*}
		where the second equality follows from the integration by parts formula.
		Note that both $\curlF_{k,n}$ and $\curlF_k$ are linear in $\phi$ and 
		\begin{align*}
		\snorm{\curlF_{k,n}(\phi)} &\leq \snorm{h_{k,n}(T) \phi(T)} + \snorm{h_{k,n}(0) \phi(0)} + 
		\int_0^T \snorm{\partial_t h_{k,n}(t)} \snorm{\phi(t)} \,dt \leq (2+C) \norm[\infty]{\phi}, \\
		\snorm{\curlF_k(\phi)} &\leq 2 \norm[\infty]{\phi},
		\end{align*}
		where the bounds are uniform in $n$, and that $\lim_{n \to\infty} \mathcal{F}_{k,n}(\phi) = 
		\mathcal{F}_k(\phi)$ for all $\phi\in \curlC^1([0,T];\R)$ and $k$.
		Now consider an arbitrary $\phi \in \curlC([0,T];\R)$ and sequence $\phi_l \in 
		\curlC^1([0,T];\R)$  which uniformly converges to $\phi$ for $l \rightarrow \infty$.
		Then for every $k$ we find
		\begin{align*}
		\lim_{n \rightarrow \infty} \curlF_{k,n}(\phi) = \lim_{n \rightarrow \infty} \lim_{l 
			\rightarrow \infty} \curlF_{k,n}(\phi_l) = \lim_{l \rightarrow \infty} \lim_{n \rightarrow \infty} 
		\curlF_{k,n}(\phi_l) = \lim_{l \rightarrow \infty} \curlF_k(\phi_l) = \curlF_k(\phi).
		\end{align*}
		Using this, for any $\mu \in \curlC([0,T];\curlP(\curlX))$ we find
		\begin{align*}
		&\sum_{x \in \curlX} g_T^n(x) \mu_T(x) - \sum_{x \in \curlX} g_0^n(x) \mu_0(x) - \int_0^T 
		\sum\limits_{x\in\curlX} \partial_t g_t^n(x) \mu_t(x) \,dt \\
		& =\sum_{k \in \curlI} \sum_{x \in \curlX} \varphi_k(x)\left[ h_{k,n}(T) \mu_T(x) - h_{k,n}
		(0) \mu_0(x) - \int_0^T \partial_t h_{k,n}(t) \mu_t(x)\,dt\right] \\
		& =\sum_{k\in\curlI} \sum_{x \in \curlX} \varphi_k(x) \curlF_{k,n}(\mu(x)) \xrightarrow{n 
			\rightarrow \infty} \sum_{k \in \curlI} \sum_{x \in \curlX} \varphi_k(x) \curlF_k(\mu(x)) =\sum_k 
		\left[\sum\limits_{x\in\curlX} \varphi_k(x) \mu_{t_k}(x) - \sum\limits_{x\in\curlX} \varphi_k(x) 
		\mu_{s_k}(x)\right],
		\end{align*}
		where we have used  Fubini's theorem to arrive at the first equality and the dominated 
		convergence theorem to pass to the limit. 
		Together with the convergence of the Hamiltonian proved earlier, we have the result. 
	\end{proof}
	
	\begin{proof}[Proof of Theorem~\ref{thm:LDP}]
		We first prove the large-deviation principle itself. 
		Applying~\cite{kraaij18} to the generator~$L$,  we take for its core  $D$ the space $\Cz(\curlX)$, equipped with the supremum norm, so that the dual $D'$ is isomorphic to $\ell^1$(\curlX). Then \cite[Proposition~5.10]{kraaij18} implies that $\rho^N$ satisfies a large-deviation principle in $D_{\curlP(\curlX)}[0,T]$ with rate function
		\begin{equation}
		\label{def:tilde_I_L}
		\widehat \I_L(\mu) =\begin{dcases}
		\int_0^T \widehat\curlL(\mu_t,\partial_t \mu_t) \,dt, & \text{if } \mu \in D\mhyphen A\curlC([0,T];\curlP(\curlX)), \\
		+ \infty, & \text{otherwise.}
		\end{dcases}
		\end{equation}
		Here the Lagrangian $\widehat\curlL:\curlP(\curlX) \times \ell^1(\curlX)\to[0,\infty]$ given in terms of $\curlH$ in~\eqref{eq:hamiltonian} by
		\[
		\widehat\curlL(\mu,s) := \sup_{f\in \Cz(\curlX)}\  \langle f,s\rangle - \curlH(\mu,f),
		\]
		and the space $D\mhyphen A\curlC([0,T];\curlP(\curlX))$ is the space of  curves $\nu:[0,\infty)\to\curlP(\curlX)$ such that $t\mapsto \langle f,\nu(t)\rangle$ is absolutely continuous for all $f\in D=\Cz(\curlX)$, with a unique weak-star measurable derivative $u:[0,\infty)\to D'=\ell^1(\curlX)$ in the sense that $(d/dt) \langle \nu(t),f\rangle = \langle f,u(t)\rangle$ for all $f\in \Cz(\curlX)$ and $t\geq0$.
		
		The rate function $\widehat \I_L$ in~\eqref{def:tilde_I_L} differs from $\I_L$ in~\eqref{eq:rateFunctional} in two ways. First, the explicit domain of definition in~\eqref{eq:rateFunctional} is $A\curlC([0,T];\curlP(\curlX))$, the space of curves that are absolutely continuous in $\ell^1(\curlX)$; this is a subspace of $D\mhyphen A\curlC([0,T];\curlP(\curlX))$. Secondly, $\curlL(\mu,s)$ is defined as a supremum over $\ell^\infty(\curlX)$, while $\widehat\curlL(\mu,s)$ is defined as the same supremum but over the smaller space $\Cz(\curlX)$, implying that $\widehat \curlL\leq \curlL$. 

		Nonetheless, we have $\widehat \I_L = \I_L$. 
		To show this, we first note that for $s\in \ell^1(\curlX)$ and $\mu\in \curlP(\curlX)$, we have
		\begin{equation}
		\label{eq:widehatL-density}
		\sup_{f\in \ell^\infty(\curlX)}\  \langle f,s\rangle - \curlH(\mu,f)= 
		\sup_{f\in c_0(\curlX)}\  \langle f,s\rangle - \curlH(\mu,f),
		\end{equation}
		and therefore $\widehat\curlL(\mu,s) = \curlL(\mu,s)$ for all $s\in \ell^1(\curlX)$.
		Indeed, fix $s\in \ell^1(\curlX)$ and $f\in \ell^\infty(\curlX)$, and let $f_n\in c_0(\curlX)$ be the truncation of $f$ to the first $n$ elements of $\curlX$. Then 
		\begin{align*}
		\sum_{x\in\curlX} f_n(x)s(x) &\to \sum_{x\in\curlX} f(x)s(x) 
		\qquad\text{and}\qquad\\
		\sum_{x,y \in \curlX} \mu(x) L(x,y) \left[e^{f_n(y) - f_n(x)} - 1\right]
		&\to \sum_{x,y \in \curlX} \mu(x) L(x,y) \left[e^{f(y) - f(x)} - 1\right],
		\end{align*}
		both by the dominated convergence theorem, since $s\in\ell^1(\curlX)$ and $(x,y)\mapsto \mu(x)L(x,y)\in \ell^1(\curlX\times\curlX)$. This proves~\eqref{eq:widehatL-density}, and shows that for $s\in \ell^1(\curlX)$, $\widehat\curlL(\mu,s) = \curlL(\mu,s)$.
		
		Next, by \cite[Proposition~2.12]{kraaij18}, curves $\mu$ with $\widehat \I_L(\mu)<\infty$ satisfy $\mu\in A\curlC([0,T];\curlP(\curlX))$. Since curves in $A\curlC([0,T];\curlP(\curlX))$ have derivatives in $\ell^1$, any curve with $\widehat \I_L(\mu)<\infty$ satisfies 
		\[
		\widehat \I_L(\mu) = \int_0^T \widehat\curlL(\mu_t,\partial_t \mu_t) \,dt
		= \int_0^T \curlL(\mu_t,\partial_t \mu_t) \,dt = \I_L(\mu).
		\]
		This proves that $\widehat \I_L = \I_L$ whenever $\widehat \I_L<\infty$. For the remaining case $\widehat\I_L(\mu)=\infty$ there are three possibilities:
		\begin{enumerate}
			\item $\mu\not\in D\mhyphen A\curlC([0,T];\curlP(\curlX))$, therefore $\mu\not\in A\curlC([0,T];\curlP(\curlX))$ and $\I_L(\mu)=\infty$ also; 
			\item $\mu\in D\mhyphen A\curlC([0,T];\curlP(\curlX))$ but $\mu\not\in A\curlC([0,T];\curlP(\curlX))$ and again $\I_L(\mu)=\infty$; 
			\item $\mu\in A\curlC([0,T];\curlP(\curlX))$ but
			\[
			\infty = \int_0^T \widehat\curlL(\mu_t,\partial_t \mu_t) \,dt
			\leq \int_0^T \curlL(\mu_t,\partial_t \mu_t) \,dt,
			\]
			so that again $\I_L(\mu)=\infty$.
		\end{enumerate}
		This proves that $\I_L = \widehat \I_L$ and concludes the proof of the large-deviation principle.
		
		\bigskip
		
		We now continue with the characterization~\eqref{eq:rateFunctionalTheorem}.
		We define
		\begin{align*}
		\tilde{\I}_L(\mu) := \sup_{f \in \curlC^1([0,T]; \Cb(\curlX))} \tilde{\LDJ}_{0,T}(\mu, 
		f),
		\end{align*}
		where $\tilde{\LDJ}_{0,T}$ is given by \eqref{eq:rate_functional_without_supremum}.
		
		The plan of the proof is now as follows.
		We first show that $\tilde{\I}(\mu) < \infty$ for $\mu \in \curlC([0,T];\curlP(\curlX))$ 
		implies that $\mu \in A\curlC([0,T];\curlP(\curlX))$.
		We then show that $\I_L(\mu) \geq \tilde{\I}_L(\mu)$ and vice versa which yields the 
		equality.
		In particular, applying integration by parts in \eqref{eq:rate_functional_without_supremum} 
		since $\mu \in A\curlC([0,T];\curlP(\curlX))$, yields 
		\[
		\I_L(\mu) = \sup_{f \in L^\infty(0,T;\Cb(\curlX))} \int_0^T \scalarprod{f_t}
		{\partial_t \mu_t} - \curlH(\mu_t, f_t) \,dt,
		\]
		which is the last part of the statement.
		
		We now show by contradiction that $\mu \in \curlC([0,T];\curlP(\curlX))$ and $\tilde{\I}
		_L(\mu) < \infty$ implies $\mu \in A\curlC([0,T];\curlP(\curlX))$. Suppose $\tilde{\I}_L(\mu) < 
		\infty$, but $\mu \notin A\curlC([0,T];\curlP(\curlX))$, i.e.\ there exists an $\e > 0$ such that for 
		any $\delta > 0$,  there exists a finite family of pairwise-disjoint intervals $[s_k,t_k] \subset [0,T]
		$, $k \in \curlI$ with
		\begin{align*}
		\sum_{k \in \curlI} \snorm{t_k - s_k} < \delta \quad\text{ and }\quad \sum_{k \in \curlI} 
		\sum_{x \in \curlX} |\mu_{t_k}(x) - \mu_{s_k}(x)| \geq \e.
		\end{align*}
		Next, for an arbitrary $A > 0$, we define $g\in L^\infty(0,T;\Cb(\curlX))$ as
		\begin{align*}
		g_t(x) := A \sum_{k \in \curlI} \operatorname{sign}(\mu_{t_k}(x) - \mu_{s_k}(x)) 
		\chi_{[s_k,t_k]}(t).
		\end{align*}
		Using Corollary~\ref{cor:restriction_of_rate_functional}, there exists a sequence $g^n \in 
		\curlC^1([0,T];\Cb(\curlX))$ such that
		\begin{align}
		\tilde{\LDJ}_{0,T}(\mu, g^n)\quad \xrightarrow{n \rightarrow \infty}\quad \sum_{k \in 
			\curlI} \tilde{\LDJ}_{s_k,t_k}(\mu, g).
		\label{eq:restriction_of_rate_functional}
		\end{align}
		Note that the latter expression is well defined since $g\vert_{[s_k,t_k]} \in 
		\curlC^1([s_k,t_k];\Cb(\curlX))$ for all $k \in \curlI$.
		Moreover, there exists a $C < \infty$ which only depends on $\mu$ and $L$ such that
		\begin{align*}
		\sum_{k \in \curlI} \int_{s_k}^{t_k} \curlH(\mu_t, g_t) \,dt \leq C e^A \sum_{k \in \curlI} 
		\snorm{t_k - s_k} < C e^A \delta,
		\end{align*}
		since $\operatorname{sign}(\mu_{t_k} - \mu_{s_k})$ is uniformly bounded in $\curlX$.
		Furthermore, we find
		\begin{align*}
		\sum_{k \in \curlI} \left[\sum_{x \in \curlX} g_{t_k}(x) \mu_{t_k}(x) - \sum_{x \in \curlX} 
		g_{s_k}(x) \mu_{s_k}(x)\right] &= A \sum_{k \in \curlI} \sum_{x \in \curlX} \operatorname{sign}
		(\mu_{t_k}(x) - \mu_{s_k}(x))(\mu_{t_k}(x) - \mu_{s_k}(x)) \\
		&= A \sum_{k \in \curlI} \sum_{x \in \curlX} |\mu_{t_k}(x) - \mu_{s_k}(x)| \geq A\e.
		\end{align*}
		Thus, using \eqref{eq:restriction_of_rate_functional} we find 
		\begin{align*}
		\tilde{\LDJ}_{0,T}(\mu,g^n) \geq \frac{1}{2} \sum_{k \in \curlI} \tilde{\LDJ}_{s_k,t_k}
		(\mu,g) \geq\frac{1}{2} \left(A\e - C e^A \delta\right),
		\end{align*}
		for sufficiently large $n$. Since $\delta > 0$ and $A > 0$ were arbitrary, the right-hand 
		side can be arbitrarily large. More specifically, for a given $A$, we choose $\delta=\e Ae^{-A}/(2C)$, 
		thereby yielding
		\begin{align*}
		\tilde{\I}_L(\mu) \geq \tilde{\LDJ}_{0,T}(\mu, g^n) \geq \dfrac{1}{4}\e A.
		\end{align*}
		Since $A$ can be made arbitrarily large, this contradicts $\tilde{\I}_L(\mu) < \infty$. Hence, $\mu \in 
		\curlC([0,T];\curlP(\curlX))$ and $\tilde{\I}_L(\mu) < \infty$ imply that $\mu \in A\curlC(0,T;
		\ell^1(\curlX))$.
		
		Next, we show that $\I_L(\mu) \geq \tilde{\I}_L(\mu)$. 
		For $\mu \notin A\curlC([0,T];\curlP(\curlX))$ we have $\I_L(\mu) = \infty$ and therefore $\I_L(\mu) \geq \tilde{\I}_L(\mu)$.
		For $\mu \in A\curlC([0,T];\curlP(\curlX))$, on the other hand, we have
		\begin{align*}
		\I_L(\mu) = \int_0^T \curlL(\mu_t, \partial_t \mu_t) \geq \int_0^T \scalarprod{f_t}
		{\partial_t \mu_t} - \curlH(\mu_t, f_t) \,dt = \tilde{\LDJ}_{0,T}(\mu,f),
		\end{align*}
		for any curve $f \in \curlC^1([0,T];\Cb(\curlX))$, where we used integration by parts 
		to arrive at the final equality. This yields $\I_L(\mu) \geq \tilde{\I}_L(\mu)$.
		
		We complete  the proof by showing that $\I_L(\mu) \leq \tilde{\I}_L(\mu)$ for  $\mu \in A\curlC([0,T];\curlP(\curlX))$.
		Note that since $\mu \in A\curlC([0,T];\ell^1(\curlX))\equiv W^{1,1}(0,T;\ell^1(\curlX))
		$ and
		\[
		\tilde{\LDJ}_{0,T}(\mu,f) = \int_0^T \scalarprod{f_t}{\partial_t \mu_t} - \curlH(\mu_t, 
		f_t) \,dt\qquad \text{for all\, $f\in \curlC^1([0,T];\Cb(\curlX))$},
		\]
		we have that $\tilde{\LDJ}(\mu, \cdot) : L^\infty(0,T;\Cb(\curlX)) \rightarrow \R$ 
		is a continuous (nonlinear) functional.
		Since every element in $L^\infty(0,T;\Cb(\curlX))$ can be  approximated pointwise by 
		a sequence in $\curlC^1([0,T];\Cb(\curlX))$, we obtain with the dominated convergence theorem that
		\begin{align*}
		\sup_{f \in \curlC^1([0,T];\Cb(\curlX))} \tilde{\LDJ}(\mu, f) = \sup_{f \in 
			L^\infty(0,T; \Cb(\curlX))} \tilde{\LDJ}(\mu,f).
		\end{align*}
		Now, for any fixed $\e > 0$ and for almost every $t \in (0,T)$ exists a $g_t \in 
		\Cb(\curlX)$ such that
		\begin{align*}
		\sum\limits_{x\in\curlX} g_t(x) \partial_t \mu_t(x) - \curlH(\mu_t, g_t) \geq \max\left\{\curlL(\mu_t, \partial_t \mu_t) - \e, 0\right\},
		\end{align*}
		where we used the definition of the Lagrangian.
		Note that $t \mapsto g_t$ might not be an element of $L^\infty(0,T;\Cb(\curlX))$.
		Therefore, we define the sequence
		\begin{align*}
		f_{t}^k(x) := \begin{dcases}
		g_t(x) & \text{if } \norm[\Cb(\curlX)]{g_t} \leq k, \\
		0 & \text{otherwise,}
		\end{dcases}
		\end{align*}
		with $k \in \N$.
		Then, by construction we have that $0 \leq \sum_{x\in\curlX} f_{t}^k(x) \partial_t\mu_t(x) 
		- \curlH(\mu_t, f_{t}^k) \leq \curlL(\mu_t, \partial_t \mu_t)$ for all $k\in\N$, where $t \mapsto 
		\curlL(\mu_t, \partial_t \mu_t) \in L^1(0,T;[0,\infty))$ since $\mu \in A\curlC([0,T];\curlP(\curlX))
		$. Furthermore, using $f_t^k(x) \leq g_t(x)$ for all $x \in \curlX$ and almost all $t \in (0,T)$ and 
		the dominated convergence theorem, we find
		\[
		\sum\limits_{x\in\curlX} f_{t}^k(x) \partial_t\mu_t(x) - \curlH(\mu_t, f_{t}^k)\quad 
		\xrightarrow{k \rightarrow \infty}\quad \sum\limits_{x\in\curlX} g_t(x) \partial_t \mu_t(x) - 
		\curlH(\mu_t, g_t),
		\]
		for almost every $t\in(0,T)$. Hence, we can apply the dominated convergence theorem to obtain
		\begin{align*}
		\lim_{k \rightarrow \infty} \tilde{\LDJ}(\mu, f^k) = \tilde{\LDJ}(\mu, g) \geq \int_0^T 
		\curlL(\mu_t, \partial_t \mu_t) \,dt - \e T.
		\end{align*}
		Finally, since $f^k \in L^\infty(0,T;\Cb(\curlX))$ for all $k \in \N$ we obtain that 
		the left-hand side is bounded from above by $\tilde{\I}_L(\mu)$.
		Therefore, since $\e > 0$ was arbitrary, we obtain $\tilde{\I}_L(\mu) \geq \I_L(\mu)$ which 
		proves the statement.
	\end{proof}
	
	\section{Positivity of solution to the forward Kolmogorov equation}	\label{app:positivity}
	In this appendix we show that the solution to the forward Kolmogorov equation with a bounded and irreducible generator is strictly positive. While we expect this result to be true, we could not find a reference for it, and therefore provide the result here for completeness. 
	
	\begin{lem}\label{lem:Pos-fKol}
		Let $\rho\in AC([0,T];\curlP(\curlX))$ be a solution to~\eqref{eq:forwardKolmogorovEquation},
		where the generator $L$ satisfies~\eqref{eq:conditionsGenerator1}-\eqref{eq:conditionsGenerator3}. Then $\rho_t\in\curlP_+(\curlX)$ for every $t>0$. 
	\end{lem}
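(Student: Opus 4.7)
The plan is to use the uniqueness of the solution from \cite[Theorem~6.6]{engelNagel06} (already invoked below~\eqref{eq:conditionsGenerator3}) to identify $\rho_t = e^{tL^T}\rho_0$, then decompose $L^T$ so that a manifestly nonnegative power series representation emerges, and finally invoke irreducibility to pinpoint a path giving strict positivity.

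First I would fix $\alpha > \sup_{x\in\curlX}|L(x,x)|$ (finite by~\eqref{eq:conditionsGenerator2}) and set $M := L^T + \alpha I$. For $x\ne y$ one has $M(x,y) = L(y,x) \geq 0$ by~\eqref{eq:conditionsGenerator1}, and $M(x,x) = L(x,x) + \alpha > 0$, so $M$ is a bounded operator on $\ell^1(\curlX)$ with entrywise nonnegative matrix. Hence $e^{tM}=\sum_{n\ge 0} t^n M^n/n!$ converges in operator norm and has nonnegative entries, and
\[
\rho_t(x) = e^{-\alpha t}\sum_{n\ge 0} \frac{t^n}{n!} (M^n\rho_0)(x)
\]
is a sum of nonnegative terms.

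Next, I would leverage irreducibility. Pick any $y_0$ with $\rho_0(y_0)>0$, which exists since $\rho_0\in\curlP(\curlX)$. By~\eqref{eq:conditionsGenerator3}, for each $x\in\curlX$ there is a sequence $y_0=z_0,z_1,\dots,z_n=x$ with $L(z_k,z_{k+1})>0$. Since $M(z_{k+1},z_k) = L(z_k,z_{k+1}) > 0$, expanding $(M^n\rho_0)(x)$ as a sum over length-$n$ paths and keeping just this contribution gives
\[
(M^n\rho_0)(x) \geq \rho_0(y_0)\prod_{k=0}^{n-1} M(z_{k+1},z_k) > 0,
\]
so the $n$-th term in the exponential series is strictly positive for any $t>0$, yielding $\rho_t(x)>0$.

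I do not expect significant obstacles beyond a technical one: justifying the entrywise series manipulations when $\curlX$ is countably infinite. This is handled by the boundedness of $L$ on $\ell^1(\curlX)$ from~\eqref{eq:conditionsGenerator2}, which makes $e^{tM}$ well defined as a bounded operator and legitimizes evaluating its action coordinatewise via the absolutely convergent series.
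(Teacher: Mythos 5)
Your proposal is correct and follows essentially the same route as the paper: write the (transposed) generator as a nonnegative matrix minus a multiple of the identity, so that $e^{tL^T}$ factors as $e^{-\alpha t}$ times an entrywise nonnegative exponential series, and then use irreducibility to extract a single path whose contribution to some power $M^n$ is strictly positive. The only cosmetic differences are that the paper shows $e^{tL^T}(x,y)>0$ for all pairs $x,y$ rather than tracking one point $y_0$ with $\rho_0(y_0)>0$, and takes $\alpha=\sup_x|L(x,x)|$ rather than a strict upper bound.
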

	\begin{proof}
		Since $L$ is a bounded Markov generator with $\nu:=\sup_{x\in\curlX} |L(x,x)|<\infty$, we can write $L=P-\nu I$ for a matrix $P$ with non-negative entries and identity matrix $I$. Since $L^T$ generates  a uniformly continuous semigroup on $\ell^1(\curlX)$ which conserves mass, we can write $e^{tL^T}=\sum_{n\geq 0} \frac{t^n (L^T)^n}{n!}(x,y)=e^{t(P^T-\nu I)}=e^{-\nu t} e^{t P^T}$. We will show that $e^{tL^T}(x,y)>0$, by proving that  $e^{tP^T}(x,y)>0$.
		
		Since $L$ is irreducible, for every $x,y\in\curlX$ with $x\neq y$, there exists a finite sequence $x_0,x_1,\ldots,x_N\in\curlX$ containing no doubled points with $x_0=x,\ x_N=y$ and $L(x_{n},x_{n+1})>0$. Using $L=P-\nu I$, $P(x_n,x_{n+1})= L(x_n,x_{n+1})>0$ we find
		\begin{align*}
		(P^T)^N(x,y)\geq \sum\limits_{i=1}^{N-1}P^T(x_{i+1},x_i)P^T(x_{i},x_{i-1})= \sum\limits_{i=1}^{N-1}P(x_i,x_{i+1})P(x_{i-1},x_{i})>0. 
		\end{align*}
		Therefore 
		\begin{align*}
		e^{tP^T}(x,y)\geq \frac{t^N(P^T)^N}{N!}(x,y)>0.
		\end{align*}
		Since $x,y\in\curlX$ are arbitrary, it follows that $e^{tL^T}$ is a positive semigroup and therefore $e^{tL^T}:\curlP(\curlX)\rightarrow\curlP_+(\curlX)$ for all $t > 0$.
	\end{proof}

	\end{appendices}
	\phantomsection
	
	\addcontentsline{toc}{section}{References}
	\bibliography{BibDeskLibrary.bib}
	\bibliographystyle{../alphainitials}
	
	\vspace{0.5cm}

	(B.\ Hilder) Institut f{\"u}r Analysis, Dynamik und Modellierung, Universit{\"a}t Stuttgart, Pfaffenwaldring 57, 70569 Stuttgart, Germany\\
	E-mail address:
	\href{mailto:bastian.hilder@mathematik.uni-stuttgart.de}{bastian.hilder@mathematik.uni-stuttgart.de}
	
	(M.\ A.\ Peletier)  Department of Mathematics and Computer Science and Institute for Complex Molecular
	Systems, Eindhoven University of Technology, 5600 MB Eindhoven, The Netherlands\\
	E-mail address: \href{mailto:M.A.Peletier@tue.nl}{M.A.Peletier@tue.nl}
	
	(U.\ Sharma) CERMICS, {\'E}cole des Ponts ParisTech, 6-8 Avenue Blaise Pascal, Cit{\'e} Descartes, Marne-la-Vall{\'e}e, 77455, France\\
	E-mail address:
	\href{mailto:upanshu.sharma@enpc.fr}{upanshu.sharma@enpc.fr}
	
	(O.\ Tse) Department of Mathematics and Computer Science, Eindhoven University of Technology, 5600 MB Eindhoven, The Netherlands\\
	E-mail address:
	\href{mailto:: o.t.c.tse@tue.nl}{o.t.c.tse@tue.nl}

\end{document}